\def\resto#1#2{{
#1\hskip 0.4ex\vline_{\hskip 0.2ex\raisebox{-0,2ex}
{{${\scriptstyle #2}$}}}}}
\def\at#1#2{
#1\hskip 0.25ex\vline_{\hskip 0.25ex\raisebox{-1.5ex}
{{$\scriptstyle#2$}}}}
\def\union{\mathop{\bigcup}}
\def\textmap#1{\mathop{\vbox{\ialign{
                                  ##\crcr
      ${\scriptstyle\hfil\;\;#1\;\;\hfil}$\crcr
      \noalign{\kern 1pt\nointerlineskip}
      \rightarrowfill\crcr}}\;}}
\def\bigtextmap#1{\mathop{\vbox{\ialign{
                                  ##\crcr
      ${\hfil\;\;#1\;\;\hfil}$\crcr
      \noalign{\kern 1pt\nointerlineskip}
      \rightarrowfill\crcr}}\;}}
\def\textlmap#1{\mathop{\vbox{\ialign{
                                  ##\crcr
      ${\scriptstyle\hfil\;\;#1\;\;\hfil}$\crcr
      \noalign{\kern-1pt\nointerlineskip}
      \leftarrowfill\crcr}}\;}}
\def\C{{\mathbb C}}
\def\N{{\mathbb N}}
\def\R{{\mathbb R}}
\def\Z{{\mathbb Z}}
\def\g{{\mathfrak g}}
\def\hg{{\mathfrak h}}
\def\jg{{\mathfrak j}}
\def\kg{{\mathfrak k}}
\def\sg{{\mathfrak s}}
\def\tg{{\mathfrak t}}
\def\Fg{{\mathfrak F}}
\def\Sg{{\mathfrak S}}
\def\Ug{{\mathfrak U}}
\theoremstyle{remark}
\newtheorem{ex}{Example}[section]
\theoremstyle{plain}
\newtheorem{sz}{Satz}[section]
\newtheorem{thry}[sz]{Theorem}
\newtheorem{pr}[sz]{Proposition}
\newtheorem{co}[sz]{Corollary}
\newtheorem{dt}[sz]{Definition}
\newtheorem{lm}[sz]{Lemma}
\theoremstyle{remark}
\newtheorem{re}[sz]{Remark} 
\theoremstyle{plain}
\def\End{\mathrm {End}}
\def\Spin{\mathrm {Spin}}
\def\U{\mathrm{U}}
\def\O{\mathrm{O}}
\def\SO{\mathrm {SO}}
\def\GL{\mathrm {GL}}
\def\gl{\mathrm {gl}}
\def\S{\mathrm {S}}
\def\so{\mathrm {so}}
\def\Iso{\mathrm {Iso}}
\def\Hom{\mathrm{Hom}}
\def\id{ \mathrm{id}}
\def\ad{\mathrm {ad}}
\def\alt{\mathrm {alt}}
\def\U2{\mathrm{U(2)}}
\def\niq{=\kern-.18cm /\kern.08cm}
\def\ad{\mathrm{ad}}
\def\Dr{\slashed{D}}
\newcommand{\cal}{\mathcal}
\DeclareFontFamily{OMX}{MnSymbolE}{}
\DeclareSymbolFont{MnLargeSymbols}{OMX}{MnSymbolE}{m}{n}
\DeclareFontShape{OMX}{MnSymbolE}{m}{n}{
    <-6>  MnSymbolE5
   <6-7>  MnSymbolE6
   <7-8>  MnSymbolE7
   <8-9>  MnSymbolE8
   <9-10> MnSymbolE9
  <10-12> MnSymbolE10
  <12->   MnSymbolE12
}{}
\DeclareFontShape{OMX}{MnSymbolE}{b}{n}{
    <-6>  MnSymbolE-Bold5
   <6-7>  MnSymbolE-Bold6
   <7-8>  MnSymbolE-Bold7
   <8-9>  MnSymbolE-Bold8
   <9-10> MnSymbolE-Bold9
  <10-12> MnSymbolE-Bold10
  <12->   MnSymbolE-Bold12
}{}
\let\llangle\@undefined
\let\rrangle\@undefined
\DeclareMathDelimiter{\llangle}{\mathopen}%
                     {MnLargeSymbols}{'164}{MnLargeSymbols}{'164}
\DeclareMathDelimiter{\rrangle}{\mathclose}%
                     {MnLargeSymbols}{'171}{MnLargeSymbols}{'171}
\def\add{{\hskip-0.7ex{\scriptscriptstyle \mathrm{ad}}}}
\def\EE{{\hskip-0.7ex{\scriptscriptstyle E}}}
\def\MM{{\hskip-0.7ex{\scriptscriptstyle M}}}
 \def\psp#1#2%
\begin{document}

\title{Infinitesimal homogeneity and bundles} 
\author{Arash Bazdar }
\address{Aix Marseille Université, CNRS, Centrale Marseille, I2M, UMR 7373, 13453 Marseille, France}
\email[Arash Bazdar]{bazdar.arash@gmail.com}
\author{Andrei  Teleman}
\email[Andrei  Teleman]{andrei.teleman@univ-amu.fr}

\begin{abstract}

 Let $Q\to M$ be a principal $G$-bundle, and $B_0$ a connection on $Q$.  We introduce  an infinitesimal homogeneity condition for sections in  an associated vector bundle $Q\times_GV$ with respect to $B_0$, and, inspired by the well known Ambrose-Singer theorem, we prove the  existence of a connection   which satisfies a system of parallelism conditions. We explain how this general theorem can be used to prove the known Ambrose-Singer type theorems by an appropriate choice of the initial system of data.
  We also obtain new applications, which cannot be obtained using the known formalisms, e.g. a   classification theorem  for locally homogeneous spinors. 
   
  Finally we introduce natural local homogeneity and local  symmetry conditions for triples $(g,P\textmap{p} M,A)$ consisting of a Riemannian metric on $M$, a principal bundle on $M$, and a connection on $P$.   Our main results concern locally homogeneous and locally symmetric triples, and they can be viewed as bundle versions of the Ambrose-Singer and Cartan theorem.
  \end{abstract}
\maketitle
\begin{quotation}
	\noindent{\bf Key Words}: {Geometric structures, Infinitesimally homogeneous, Ambrose-Singer theorem, Principal bundles, Connections
	}
	
	\noindent{\bf 2010 Mathematics Subject Classification}:   53C05, 53C15, 53C30, 53C35
\end{quotation}

\tableofcontents

\section{Introduction}
\label{intro}

A Riemannian manifold $(M,g)$ is called     symmetric (locally symmetric) if, for each point $x\in M$, there exists an isometry $s_x:M\to M$  (respectively a local isometry $s_x:U\to U$ defined on an open neighborhood $U$ of $x$) such that $s_x(x)=x$ and $d_xs_x=-\id_{T_xM}$. 

 An explicit characterization of these spaces is given by the following theorem due to E. Cartan:

\begin{thry}\label{Cartan}\cite[Theorem 6.2, Theorem 6.3]{KN2}
	A Riemannian manifold $(M,g)$ is locally symmetric if and only if $\nabla^gR^g = 0$, where $\nabla^g$ is the Levi-Civita connection of $g$ and $R^g$ is the Riemann curvature tensor. Moreover, any connected, simply connected, complete locally symmetric Riemannian manifold is (globally) symmetric.
\end{thry}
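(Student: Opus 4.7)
The plan is to establish the two implications and the global refinement by separate arguments, all built on the interplay between the exponential map, parallel transport, and the curvature of $g$.

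For the implication ``locally symmetric $\Rightarrow \nabla^g R^g = 0$'' the argument is short. Each local symmetry $s_x$ is a local isometry and therefore preserves the Levi-Civita connection together with $R^g$ and $\nabla^g R^g$. Evaluating the identity $s_x^*(\nabla^g R^g)=\nabla^g R^g$ at $x$, I would use $d_x s_x=-\id_{T_xM}$ to compute that a tensor of type $(1,k)$ transforms under $-\id$ by the overall scalar $(-1)^{k+1}$. With $k=4$ this yields $(\nabla^g R^g)_x=-(\nabla^g R^g)_x$, hence $(\nabla^g R^g)_x=0$ for every $x\in M$; the analogous parity for $R^g$ itself (type $(1,3)$) gives no constraint, consistently with the possibly nontrivial curvature of symmetric spaces.

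For the converse direction, fix $x\in M$ and propose as local symmetry the map $s_x:=\exp_x\circ(-\id_{T_xM})\circ\exp_x^{-1}$ on a normal neighborhood $U$ of $x$. The key task is to verify that $s_x$ is an isometry, and I would do so by a Jacobi field comparison along pairs of opposite radial geodesics $\gamma(t)=\exp_x(tv)$ and $\gamma^-(t)=\exp_x(-tv)$, $v\in T_xM$. Define a family of linear isometries $\Phi_t:T_{\gamma(t)}M\to T_{\gamma^-(t)}M$ by composing parallel transport along $\gamma$ back to $x$, the linear map $-\id_{T_xM}$, and parallel transport along $\gamma^-$ from $x$. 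The hypothesis $\nabla^g R^g=0$ precisely ensures that $R^g$ is parallel along every geodesic, so its values along $\gamma$ and along $\gamma^-$ are both determined by $R^g_x$; combined with the $(-\id)$-invariance of $R^g_x$ observed above, this shows that $\Phi_t$ intertwines the tidal operators $R^g(\cdot,\dot\gamma)\dot\gamma$ on the two sides. The Jacobi equation for variations of $\gamma$ through radial geodesics then transforms into the Jacobi equation along $\gamma^-$, so $d_{\gamma(t)}s_x$ coincides with $\Phi_t$ and is in particular a linear isometry, establishing that $s_x$ is a local isometry on $U$.

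The ``moreover'' part is obtained by a Cartan-Ambrose-Hicks continuation. Given completeness, any broken geodesic out of $x$ can be used as a path along which the germ of $s_x$ is propagated by parallel transport; simple connectedness guarantees path-independence of the extension, and connectedness assembles these pieces into a globally defined isometry $s_x:M\to M$. The main obstacle of the proof is the intertwining identity needed in the converse direction: the verification that $\Phi_t$ conjugates the tidal operators at $\gamma(t)$ and $\gamma^-(t)$ uses the full strength of $\nabla^g R^g=0$, and all remaining steps—existence of normal neighborhoods, the Jacobi field description of $d\exp$, and the geodesic-continuation argument—are standard once this identity is in place.
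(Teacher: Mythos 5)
The paper does not prove this statement; Theorem \ref{Cartan} is quoted directly from \cite[Theorems 6.2 and 6.3]{KN2} and used as background, so there is no internal proof to compare against. Your argument is correct and is essentially the classical Cartan proof one finds in the cited reference: the parity computation for $(1,k)$-tensors under $-\id_{T_xM}$ gives the forward implication (and explains why $R^g$ itself is unconstrained), the geodesic symmetry $s_x=\exp_x\circ(-\id)\circ\exp_x^{-1}$ together with the $\Phi_t$-intertwining of tidal operators forced by $\nabla^g R^g=0$ handles the converse via Jacobi fields, and completeness plus simple connectedness allow the Cartan--Ambrose--Hicks continuation to a global symmetry. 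The only stylistic difference from Kobayashi--Nomizu is that they derive the local isometry from their general affine version of the Cartan--Ambrose--Hicks theorem (their Chapter VI, Theorem 7.2) rather than running the Jacobi-field comparison explicitly, but the two routes are equivalent and your version is, if anything, more self-contained.
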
 

Symmetric (locally  symmetric) Riemannian manifolds are homogeneous (resp. locally homogeneous).  Recall that a  Riemannian manifold $(M,g)$ is called homogeneous (locally homogeneous) if for any two points $x_1$, $x_2\in M$ there is an isometry $\varphi:M\to M$ (resp.   $\varphi:U_1\to U_2$ between open neighborhoods $U_i\ni x_i$) with $\varphi(x_1)=x_2$.  

Cartan's result (Theorem \ref{Cartan}) has been extended by Ambrose and Singer to the locally homogeneous framework:

\begin {thry} \label{ASTheorem} \cite{AS}\cite{Si}   A Riemannian manifold $(M,g)$ is locally homogeneous if and only if there exists a metric connection $\nabla$ such that $\nabla R^\nabla=\nabla  T^\nabla=0$,
where $T^\nabla\in A^0(L^2_\alt(T_M,T_M))$ and $R^\nabla\in A^2(\gl(T_M))$ denote the torsion and the curvature of  $\nabla$. Moreover, any connected, simply connected, complete locally homogeneous Riemannian manifold is (globally) homogeneous.
\end{thry}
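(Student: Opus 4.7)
The plan is to deduce the theorem from the main bundle-theoretic result of the paper by transferring everything to the orthonormal frame bundle $\pi\colon P_g\to M$, a principal $\O(n)$-bundle. Metric connections $\nabla$ on $T_M$ correspond bijectively to principal connections $A_\nabla$ on $P_g$; and, via orthonormal frames, the torsion $T^\nabla$ and the curvature $R^\nabla$ become sections of the associated bundles $P_g\times_{\O(n)}V_T$ and $P_g\times_{\O(n)}V_R$, where $V_T\coloneqq\Lambda^2(\R^n)^*\otimes\R^n$ and $V_R\coloneqq\Lambda^2(\R^n)^*\otimes\so(n)$. Under this dictionary, the hypotheses $\nabla T^\nabla=\nabla R^\nabla=0$ translate exactly into the parallelism of these two sections with respect to $A_\nabla$.

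For the direction ``locally homogeneous $\Rightarrow$ existence of $\nabla$'' I would follow the classical Nomizu construction: fix $x_0\in M$, choose a reductive decomposition $\ig_{\mathrm{loc}}(M,g,x_0)=\hg\oplus\mg$ of the local Killing algebra at $x_0$ with $\hg$ the isotropy subalgebra, and set $\nabla\coloneqq\nabla^g-S$, where $S$ is the Nomizu tensor of the splitting, spreading it to all of $M$ via local isometries; these then act as affine maps for $\nabla$, so transitivity forces $T^\nabla$ and $R^\nabla$ to be $\nabla$-parallel. The converse, the harder direction, is where the paper's machinery enters. Given $\nabla$ with $\nabla T^\nabla=\nabla R^\nabla=0$, I would feed the parallel sections $T^\nabla,R^\nabla$ on $P_g$ into the main theorem as the initial system of data; the required infinitesimal homogeneity condition holds automatically by parallelism. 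The theorem then yields, for any $x_1,x_2\in M$ and any linear isometry $F\colon T_{x_1}M\to T_{x_2}M$ intertwining the values of $(T^\nabla,R^\nabla)$ at the two points (such an $F$ exists by parallelism and connectedness of $M$), a local bundle automorphism $\Phi$ of $P_g$ covering a local diffeomorphism $\varphi\colon U_1\to U_2$ with $d_{x_1}\varphi=F$; since $\Phi$ is $\O(n)$-equivariant and preserves $P_g$, the covered map $\varphi$ is automatically a local isometry with $\varphi(x_1)=x_2$.

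For the ``moreover'' statement I would combine completeness with a standard monodromy argument: the ODE satisfied by any local isometry in suitable frames is determined entirely by $(T^\nabla,R^\nabla)$, so a germ of isometry extends along any smooth path in $M$ by uniqueness of solutions; simple connectivity makes the extension path-independent, and completeness ensures it is defined on all of $M$, producing a global isometry $M\to M$. The main obstacle I anticipate lies in the converse: one must verify that the bundle automorphism produced by the general theorem genuinely preserves the $\O(n)$-reduction, so that the diffeomorphism it covers is an isometry and not merely an affinity for $\nabla$. This should be built in from the start by applying the general theorem to $P_g$ itself, with structure group $\O(n)$, rather than to the full $\GL(n)$-frame bundle, so that $\Phi\in\Aut(P_g)$ and the covered diffeomorphism of $M$ is automatically $g$-preserving.
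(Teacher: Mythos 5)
Your proposal inverts the roles of the two implications relative to the paper, and in the process misstates what Theorem \ref{ConnectionB-Th} actually produces. The paper uses its machinery for the \emph{forward} direction (locally homogeneous $\Rightarrow$ existence of an Ambrose--Singer connection): local homogeneity forces $R^g$, viewed as a section of $(\Lambda^1_M)^{\otimes 4}$ associated to the orthonormal frame bundle $\O(M)$, to be infinitesimally homogeneous with respect to the Levi--Civita connection $C_0$; Corollary \ref{ConnectionBCo} then yields $C\in{\cal A}(\O(M))$ whose induced metric connection $\nabla$ satisfies $\nabla R^g=0$ and $\nabla(C-C_0)=0$; and Corollary \ref{C_C_0CO} converts these identities into $\nabla R^\nabla=\nabla T^\nabla=0$. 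Your Nomizu-construction route to this direction is a legitimate classical alternative, but gluing the locally defined reductive splittings of the local Killing algebra into a globally defined smooth tensor $S$ on a merely locally homogeneous manifold requires its own argument, and it entirely sidesteps the mechanism of the paper, which is the whole point of the section.

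The more serious issue is your treatment of the converse. Theorem \ref{ConnectionB-Th} takes an infinitesimally homogeneous section $\sigma$ and outputs a \emph{connection} $B$ making the prolonged section $\sigma^{(k)}_{B_0}$ and the difference $B-B_0$ parallel; it does not output local bundle automorphisms or local diffeomorphisms of $M$. If you feed it $T^\nabla,R^\nabla$, which are already $\nabla$-parallel, the theorem simply returns a connection (one can take $B$ equal to the one you started with) and produces nothing new. The object you describe---a local bundle automorphism of $\O(M)$ covering a diffeomorphism $\varphi$ with $d_{x_1}\varphi=F$ whenever $F$ intertwines the torsion and curvature values---is precisely the classical statement \cite[Theorem 7.4, Corollary 7.5]{KN} about linear connections with $\nabla R^\nabla=\nabla T^\nabla=0$, and this, not Theorem \ref{ConnectionB-Th}, is what the paper invokes for the converse. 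Your sketch of the ``moreover'' clause is essentially right in outline; the paper packages it as \cite[Corollary 7.9]{KN} applied to the Ambrose--Singer connection, using Proposition \ref{completeNabla} to pass from metric completeness of $(M,g)$ to geodesic completeness of $\nabla$.
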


A metric connection  satisfying the above conditions is called an Ambrose-Singer connection  by some authors \cite{TV, Tr, NT}. Obviously the Levi-Civita connection of a locally symmetric space is an Ambrose-Singer connection. Theorem \ref{ASTheorem} can be reformulated as follows: $(M,g)$ is locally homogeneous  if and only if it admits an Ambrose-Singer connection. 

This statement is a special case of a very general principle in modern differential geometry: a local homogeneity condition for a class of geometric structures  on a given manifold is equivalent to the existence of a connection (on a suitable bundle) which satisfies an Ambrose-Singer type condition. The  goal of this article is  a general theorem which implies all known results of this type, i.e. {\it all Ambrose-Singer type theorems}.   

Our approach and our formalism  are motivated by the following bundle versions of Cartan's and  Ambrose-Singer's theorem. In order to state these theorems, which will be proved in section \ref{LHCchap}, we define the bundle analogues of the notions ``(locally) symmetric space", ``(locally) homogeneous space": 

\begin{dt}\label{DefSymLH}
Let $M$ be a differentiable manifold, and $K$ a Lie group. A triple $(g,P\textmap{p} M,A)$ consisting of a Riemannian metric $g$ on $M$, a principal $K$-bundle $P\textmap{p} M$ on $M$, and a connection $A$ on $P$ is called:
\begin{enumerate}
\item Symmetric (locally symmetric)	if, for each point $x\in M$, there exists: 
\begin{enumerate}[(i)]
\item 	An isometry $M\textmap{s_x}M$  (resp. a local isometry $U\textmap{s_x} U$) such that $s_x(x)=x$ and $d_xs_x=-\id_{T_xM}$,
\item An $s_x$-covering   bundle isomorphism $P\textmap{\Phi_x} P$ (resp. $P_U\textmap{\Phi_x} P_U$) which leaves any point $y\in P_x$ and the connection $A$ (resp. $A_U$) invariant.
\end{enumerate}
\item Homogeneous (locally homogeneous)  if for any two points $x_1$, $x_2\in M$ there exists:
 \begin{enumerate}
\item  An isometry $M\textmap{\varphi} M$ (resp. a local isometry  $x_1\in U_1\textmap{\varphi} U_2\ni x_2$) with $\varphi(x_1)=x_2$. 
\item A  $\varphi$-covering bundle isomorphism $P\textmap{\Phi} P$ (resp. $P_{U_1}\textmap{\Phi} P_{U_2}$) which leaves $A$-invariant (resp. such that $\Phi^*(A_{U_2})=A_{U_1}$).
\end{enumerate} 
\end{enumerate}

\end{dt}

In this definition $P_x$ stands for the fiber $p^{-1}(x)$, and for an open set $U\subset M$, we denoted by $P_U$ ($A_U$) the restriction of the bundle $P$ (the connection $A$) to $U$ (respectively $P_U$). 
With these definitions we will prove the following bundle analogous of Cartan's, respectively Ambrose-Singer's theorem :
\begin{thry} \label{LSLHTh}
Let $M$ be a differentiable manifold, and $K$ a compact Lie group. 	 
\begin{enumerate}
\item 	\label{LsymSt} A triple $(g,P\textmap{p} M,A_0)$ as above is locally symmetric if and only if %
$$\nabla^g R^g=0,\ (\nabla^{g}\otimes\nabla^{A_0})F^{A_0}=0.$$
\item \label{LHSt} A triple $(g,P\textmap{p} M,A_0)$ as above is locally homogeneous if and only if there exists a pair $(\nabla,A)$ consisting of a metric connection on $M$ and a connection $A$ on $P$ such that
		$$\nabla  R^\nabla=0,\ \nabla T^\nabla=0,\ (\nabla\otimes \nabla^A)F^A=0,\  (\nabla\otimes \nabla^A)(A-A_0)=0\,.
		$$
\end{enumerate}

\end{thry}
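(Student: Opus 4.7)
The plan is to derive both parts from the main general Ambrose-Singer type theorem of the paper, applied to the principal $(\O(n)\times K)$-bundle $Q:=F_g(M)\times_M P$ on $M$, where $F_g(M)$ is the orthonormal frame bundle of $(M,g)$. The initial connection on $Q$ is taken to be $B_0:=\nabla^g\oplus A_0$ in part~(\ref{LsymSt}) and $B_0:=\nabla\oplus A$ in part~(\ref{LHSt}), and the section to which infinitesimal homogeneity is applied packages the curvature-torsion data of $B_0$ together with the difference $A-A_0$.

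For part~(\ref{LsymSt}), ``only if'', each pair $(s_x,\Phi_x)$ preserves $g$ and $A_0$, and hence $R^g$, $F^{A_0}$ and all their iterated covariant derivatives. At the fixed point $x$ one has $d_xs_x=-\id_{T_xM}$, and $\Phi_x$ fixes $P_x$ pointwise, hence acts trivially on $\ad(P)_x$. Consequently a $(p,q)$-tensor at $x$ with trivial $\ad(P)_x$-coefficient is mapped to itself up to the sign $(-1)^{p+q}$, and must vanish when $p+q$ is odd. Applied to $\nabla^gR^g$ (type $(1,4)$) and to $(\nabla^g\otimes\nabla^{A_0})F^{A_0}$ (type $(0,3)$ with $\ad(P)$-coefficients), this yields both vanishing statements. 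For the converse, Theorem \ref{Cartan} supplies the geodesic symmetries $s_x$ from $\nabla^gR^g=0$; lift each $s_x$ to $\Phi_x$ by double $A_0$-parallel transport along geodesics through $x$, so that $\Phi_x$ fixes $P_x$ pointwise by construction, and use $(\nabla^g\otimes\nabla^{A_0})F^{A_0}=0$ together with Bianchi identities to force $\Phi_x^*A_0=A_0$.

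For part~(\ref{LHSt}), ``only if'', fix $x_0\in M$, a frame $u_0\in F_g(M)_{x_0}$ and a point $q_0\in P_{x_0}$, and transport $(u_0,q_0)$ to every other point of $M$ via the local isometries and their lifts from the homogeneity hypothesis. Declare $\nabla$ and $A$ to be the unique connections for which the transported sections are parallel. The four parallelism identities then follow from the invariance of $g$, $R^g$, $A$, $A_0$ and $F^A$ under the local isometries and their lifts; crucially the section $A-A_0\in A^1(\ad(P))$ is preserved, yielding $(\nabla\otimes\nabla^A)(A-A_0)=0$. The reverse direction invokes the general theorem: the four parallelism hypotheses translate into infinitesimal homogeneity of a suitable section on $Q$, and the theorem produces local bundle automorphisms of $Q$ preserving $B_0$. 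These descend to local isometries of $(M,g)$ by preservation of the solder form on $F_g(M)$, with lifts preserving $A$, and the parallelism of $A-A_0$ promotes $A$-preservation to $A_0$-preservation, matching Definition \ref{DefSymLH}.

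The main obstacle is the final step of part~(\ref{LHSt}): identifying the correct associated bundle on $Q$ so that the quadruple $(R^\nabla,T^\nabla,F^A,A-A_0)$ assembles into a single section whose infinitesimal homogeneity with respect to $B_0$ is equivalent to the four scalar parallelism conditions, and then checking that the bundle isomorphisms produced by the general theorem are of exactly the type required by Definition \ref{DefSymLH}---i.e.\ that they descend to isometries of $g$ and that their lifts preserve both $A$ and $A_0$ simultaneously. The inclusion of $A-A_0$ as a parallel section is the technical device that promotes the automatic $A$-preservation to $A_0$-preservation.
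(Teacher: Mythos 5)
There is a genuine gap in your proposal, and you correctly identify its location but not its resolution. The implication ``(3) $\Rightarrow$ (1)'' in part~(\ref{LHSt}) --- from the existence of a pair $(\nabla,A)$ satisfying the four parallelism conditions back to local homogeneity of the triple --- is the hard direction, and your sketch (``the general theorem produces local bundle automorphisms of $Q$ preserving $B_0$'') misattributes to Theorem \ref{ConnectionB-Th} something it does not do. That theorem produces a \emph{connection}, not local automorphisms. Converting parallelism conditions into actual local bundle isomorphisms is precisely what is missing. The paper's key device is to push the data up to the total space $P$: from $(\nabla,A)$ one builds the linear connection $\bar\nabla=\nabla^{h,A}\oplus\nabla^{v,A}$ on $T_P$, proves $\bar\nabla R^{\bar\nabla}=\bar\nabla T^{\bar\nabla}=0$ (Theorem \ref{nablabarIsSinger}) and that $\bar\nabla$ preserves both horizontal distributions $A$ and $A_0$ (Lemma \ref{AA0parallel}, which is where the condition $(\nabla\otimes\nabla^A)(A-A_0)=0$ enters), and then runs a space-of-germs argument for $\bar\nabla$-affine isomorphisms of $P$ compatible with the $K$-action, $g$, $A$, $A_0$ to produce the required local bundle isomorphisms. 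Nothing in your outline plays this role, and the step cannot be finessed: working entirely on $M$ (or on $Q$ over $M$) does not give you the torsion/curvature control on the total space that \cite[Theorem 7.4]{KN} needs.

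Two further points. First, in the ``only if'' direction of part~(\ref{LHSt}) you propose to define $\nabla$ and $A$ by transporting a single frame/point around via the local isometries and declaring the result parallel; this is not well defined, since the local isometries form a pseudogroup and the transported sections depend on the choices made, so there is no canonical global section to make parallel. The paper avoids this by going (1)$\Rightarrow$(2)$\Rightarrow$(3): local homogeneity implies infinitesimal homogeneity of the section $\sigma=(R^g,F^{A_0})$ with respect to $B_0=(C_0,A_0)$ (here $C_0$ is Levi--Civita, \emph{not} the yet-to-be-found $\nabla$ --- your choice of $B_0=\nabla\oplus A$ is circular), and then Corollary \ref{ConnectionBCo} and the reduction $P^\sigma_{q_0}\subset Q$ produce $(\nabla,A)$. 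Second, for the ``if'' direction of part~(\ref{LsymSt}), your ``double $A_0$-parallel transport along geodesics plus Bianchi identities'' is too vague to constitute a proof that $\Phi_x^*A_0=A_0$; the paper again uses $\bar\nabla$ built from $(\nabla^g,A_0)$, checks $f_y=\mathrm{diag}(-\mathrm{id}_{A_0},\mathrm{id}_{T(P_x)})\in S^{\bar\nabla}_{g,K}$, and recovers $\Phi_x$ as the germ of a $\bar\nabla$-affine isomorphism. Your ``only if'' argument for part~(\ref{LsymSt}), by contrast, is essentially correct and is the parity argument the paper also invokes via \cite[Section XI.1]{KN2}.
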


In this statement we used the affine space structure of the space ${\cal A}(P)$ of all connections on $P$, so  $A-A_0$ is an element of $A^1(M,\ad(P))$.  \\

Why are we interested in these bundle analogues of the classical notions and results we have recalled?  Note first that a result of Itoh, which we explain below, yields a large class of symmetric triples in the sense of Definition \ref{DefSymLH}.  
 
\begin{ex}

Let $(G,H)$ be a reductive pair. In other words, $G$ is a connected Lie group, $H$ a closed subgroup of $G$ such that $\hg$ admits an $\ad_H$-invariant complement $\sg$ in $\g$. Such a complement defines a $G$-invariant connection $A^\sg$ on the principal bundle $G\to G/H$. For a  Lie group $K$, the data of a $G$-homogeneous $K$-bundle on $M\coloneqq G/H$ is equivalent to the data of a morphism $\lambda:H\to K$. The bundle corresponding to $\lambda$ is $P_\lambda\coloneqq G\times_\lambda K$. This bundle comes with an obvious  $G$-invariant connection, namely $A^\sg_\lambda\coloneqq (\rho_\lambda)_*(A^\sg)$, where $\rho_\lambda:G\to P_\lambda$ is the obvious bundle morphism.  For a symmetric space $(G,H,\sigma)$ (see \cite[Section XI.2]{KN2})   one has a canonical choice of $\sg$ (see \cite[Proposition 2.2]{KN2}), so any $G$-homogeneous $K$-bundle $P$ on $M$ comes with a canonical connection, which will be denoted by $A_\lambda$. With these preparations we can state the following result of  Itoh \cite{It}.
\begin{thry}\cite{It}\label{ITOH}
Let $(G,H,\sigma)$ be a symmetric space with $H$ compact, and $M=G/H$ compact and oriented. Let $P$ be a $G$-homogeneous principal $K$-bundle on $M$ with $K$ compact. Let $g$ be the Riemannian metric on $M$	associated with an $\ad_H$-invariant inner product on the canonical complement $\sg$ of $\hg$ in $\g$. Then, the curvature of the canonical invariant connection $A_\lambda$ on $P$ is parallel with respect to $\nabla^g\otimes \nabla^{A_\lambda}$. \end{thry}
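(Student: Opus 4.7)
The plan is to derive the theorem from the classical principle that on a reductive homogeneous space $G/H$, every $G$-invariant section of a $G$-homogeneous associated bundle is parallel with respect to the canonical invariant connection.

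First, I would record the relevant form of that principle (see Kobayashi--Nomizu, Vol.~II, Ch.~X). If $Q\to G/H$ is a $G$-homogeneous principal bundle equipped with a $G$-invariant connection $B$ obtained by the associated-bundle construction from the canonical connection on $G\to G/H$, then parallel transport along the orbit curve $\gamma(t)=\exp(tX)H$, $X\in\sg$, is given by the action of $\exp(tX)$. For an associated vector bundle $Q\times_K V$ and a $G$-invariant section $\sigma$, the equality $\sigma(\exp(tX)H)=\exp(tX)\cdot\sigma(eH)$ together with the parallel transport formula then force $\nabla^B_X\sigma(eH)=0$ for every $X\in\sg$; $G$-equivariance extends this to every point, hence $\nabla^B\sigma\equiv 0$.

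Second, the symmetric space identity $[\sg,\sg]\subset\hg$ implies that the canonical invariant connection on $TM=G\times_{\ad_H}\sg$ has vanishing torsion (its value at $eH$ on $X,Y\in\sg$ is $-[X,Y]_\sg=0$). Being also metric for $g$, it must coincide with the Levi-Civita connection $\nabla^g$. By construction $A_\lambda=(\rho_\lambda)_*(A^\sg)$, so the induced $\nabla^{A_\lambda}$ on $\ad(P_\lambda)=G\times_{\Ad\circ\lambda}\kg$ is likewise the canonical invariant connection on this associated bundle. Hence both $\nabla^g$ and $\nabla^{A_\lambda}$ are canonical invariant connections.

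Third, the curvature $F^{A_\lambda}$ is itself $G$-invariant because $A_\lambda$ is, and thus defines a $G$-invariant section of $\Lambda^2T^*M\otimes\ad(P_\lambda)$. The tensor product connection $\nabla^g\otimes\nabla^{A_\lambda}$ is the canonical invariant connection induced on that bundle by the pair $(A^\sg,A_\lambda)$, so the principle of the first step immediately yields $(\nabla^g\otimes\nabla^{A_\lambda})F^{A_\lambda}=0$, which is the claim.

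The only non-routine ingredient is the invariant-implies-parallel lemma on reductive homogeneous spaces; once it is in hand the rest is formal. Compactness of $H$, $K$ and of $M$, as well as orientability of $M$, do not enter the parallelism statement and appear only for unrelated analytic reasons in Itoh's original setting.
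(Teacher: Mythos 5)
Your proof is correct. Note, however, that the paper does not supply its own proof of this statement: Theorem~\ref{ITOH} is stated as a citation of Itoh, so there is no in-paper argument to compare against. Your derivation is a clean, self-contained route: the invariant-implies-parallel principle for the canonical connection of a reductive pair (parallel transport along $t\mapsto\exp(tX)H$ with $X\in\sg$ is the $\exp(tX)$-action, so a $G$-invariant section is parallel there, and $G$-equivariance propagates this to all of $M$), the identification of $\nabla^g$ with the canonical invariant connection on $T_M$ via $[\sg,\sg]\subset\hg$ (torsion-free) and $G$-invariance of $g$ (metric), the identification of $\nabla^{A_\lambda}$ on $\ad(P_\lambda)\cong G\times_{\Ad\circ\lambda}\kg$ with the canonical connection, and the $G$-invariance of $F^{A_\lambda}$. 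Each step is sound, and your closing observation that compactness of $H$, $K$, $M$ and orientability of $M$ do not enter the parallelism claim (they matter for Itoh's Yang--Mills analysis, not for this algebraic identity) is also accurate.
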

 Therefore, the triple $(g,P_\lambda\to M, A_\lambda)$ is locally symmetric; it will be symmetric when $M$ is simply connected. 
\end{ex}

Note also that 

\begin{ex} For a locally symmetric (locally homogeneous) Riemannian manifold $(M,g)$, let $C_0\in {\cal A}(\O(M))$ denote the Levi-Civita connection on the orthonormal frame bundle $\O(M)$ of $(M,g)$. The associated triple $(g, \O(M)\to M, C_0)$ is locally symmetric (respectively locally homogeneous) in the sense of Definition  \ref{DefSymLH}. 	
\end{ex}

 Our bundle versions of Cartan's and Ambrose-Singer's theorem (Theorem \ref{LSLHTh}) have important consequences : we will prove (see Theorem \ref {CompleteBase}, Corollary \ref{CompleteBasecor}) that any locally symmetric (homogeneous) triple on a simply connected, complete base $(M,g)$  is globally symmetric (homogeneous). This will allow us to prove a classification theorem for locally symmetric (homogeneous) triples:   any locally symmetric (homogeneous) triple on a compact base $M$ is a quotient of a globally symmetric (homogeneous) triple on the universal cover $\tilde M$ (see Theorem \ref {main-DIffcase}, Corollary \ref{main-coro}). These results play an important role and can be used effectively for the classification of  geometric  manifolds (in the sense of Thurston) which are  principal bundles over a given geometric base (see \cite{Ba2}, \cite{BaTe}). \\

In this article we prove a  general theorem, explained in the next subsection, which yields not only Theorem \ref{LSLHTh} (\ref{LHSt}), but {all}  Ambrose-Singer type theorems which we found in the literature. Moreover, as special cases of our theorem, we will obtain new such results, for instance a characterization of  locally homogeneous pairs $(g,s)$ consisting of a Riemannian metric on a spin manifold and a spinor. 

\subsection{Infinitesimally homogeneous sections}

Let $M$ be a differentiable  manifold of dimension $n$, and $L(M)\to M$ be its bundle of linear frames. Let $G$ be a Lie group, $\pi:Q\to M$ be a principal $G$-bundle over $M$, $r:G\to \GL(n)$ be a morphism of Lie groups, and $ f : Q\to L(M)$ be an $r$-morphism of principal bundles over $M$. Let also $V$ be a finite dimensional vector space,    $\rho:G\to \GL(V)$ be a morphism of Lie groups, and $E\coloneqq Q\times_\rho V$ be the associated vector bundle. \vspace{1.5mm}

For $B\in \cal A (Q)$, the linear connection on the tangent bundle $T_M$ associated with $f_*(B)$ is denoted by $\nabla^B_\MM$; it coincides with the linear connection associated with $B$ on the vector bundle $Q\times_r \R^n\simeq T_M$.  The linear connections on the vector bundles $E=Q\times_\rho V$  ($\ad(Q)=Q\times_\ad \g$) associated with $B$ will be denoted by $\nabla^B_\EE$ (respectively $\nabla^B_\add$).
\vspace{2mm} \\ 
Let $B_0$ be a fixed connection on $Q$, and $\sigma\in\Gamma(E)$.  We define recursively  the derivatives of $\sigma$ with respect to $B_0$ by
$$
\sigma^{(0)}_{B_0}=\sigma,\ \sigma^{(i)}_{B_0}=\big((\nabla^{B_0}_\MM)^{\otimes (i-1)}\otimes \nabla^{B_0}_\EE\big)\sigma^{(i-1)}_{B_0}\,.
$$
 For any $k\in\N$ and $x\in M$  put 
\begin{equation}\label{hsigmax}\hg^\sigma_x{(k)}\coloneqq \{b\in \ad(Q_x)|\ b\cdot\{\sigma^{(i)}_{B_0}\}_x=0 \hbox{ for } 0\leq i\leq k\}\,,
\end{equation}
and note that $\hg^\sigma_x{(k)}$ is a Lie subalgebra of the finite dimensional Lie algebra $\ad(Q_x)$. One has $\hg^\sigma_x{(k+1)}\subset \hg^\sigma_x{(k)}$ for any $k$. Put 
$$k_x^\sigma\coloneqq \min\{k\in\N|\ \hg^\sigma_x{(k+1)}=\hg^\sigma_x{(k)}\}\,.$$

If $(x_1,x_2)\in M\times M$ then for any $G$-equivariant isomorphism $\theta:Q_{x_1}\to Q_{x_2}$, we can define the next linear isomorphisms
$$\theta_V:E_{x_1}\to E_{x_2}\,,\ \theta_{\g}:\ad(Q_{x_1})\to\ad(Q_{x_2})\,. $$
We also obtain a linear isomorphism  $\theta_M:T_{x_1}M\to T_{x_2}M$ via the bundle morphism $ f $.
Now, denoting by $\theta^k$ the induced isomorphism $\{\Lambda^{1}_{x_1}\}^{\otimes k}\otimes E_{x_1}\to \{\Lambda^{1}_{x_2}\}^{\otimes k}\otimes E_{x_2}$ we define
 
\begin{dt} \label{InfHomSectDef} Let $B_0\in {\cal A}(Q)$ be a connection on $Q$. A section $\sigma\in\Gamma(E)$  is called infinitesimally 	homogeneous  with respect to $B_0$ if for any pair $(x_1,x_2)\in M\times M$ there exists  a $G$-equivariant isomorphism $\theta:Q_{x_1}\to Q_{x_2}$ such that 
\begin{equation}\label{LocHomCond1}
\theta^i\big(\big\{\sigma^{(i)}_{B_0} \big\}_{x_1}\big)=\big\{\sigma^{(i)}_{B_0}\big\}_{x_2} \hbox{  for }  0\leq  i\leq k_{x_1}^\sigma+1\,.	
\end{equation}
\end{dt}

Let $\theta:Q_{x_1}\to Q_{x_2}$ be a $G$-equivariant isomorphism such that (\ref{LocHomCond1}) holds. Then  $\theta_\g$ applies isomorphically $\hg^\sigma_{x_1}{(k)}$ on $\hg^\sigma_{x_2}{(k)}$ for $0\leq k\leq k_{x_1}^\sigma+1$. This implies
\begin{re}
 If  $\sigma\in\Gamma(E)$  is an  infinitesimally homogeneous section  with respect to $B_0$, then $k_x^\sigma$ is independent of $x$. The obtained constant will be denoted by $k^\sigma$. \end{re}
 %
%The obtained constant will be denoted by $k^\sigma$ and will be called Singer number of $\sigma$. 
 
% Using this constant we obtain, for any $x\in M$ a Lie subalgebra
% %
%\begin{equation}\label{defhgsigma}
%\hg^\sigma_x\coloneqq \hg^\sigma_x(k^\sigma+1)\subset \ad(Q_x)
%\end{equation}
%%
%which will play an important role in our arguments.

Our  generalizations of the Ambrose-Singer theorem  will be based on:
 
 \newtheorem*{th-main}{Theorem \ref{ConnectionB-Th}}
  \begin{th-main}\label{ConnectionB-intro}
Suppose that $\sigma$ is  infinitesimally 	homogeneous  with respect to  $B_0$. Fix $q_0\in Q$, and suppose that the pair $(G,H^\sigma_{q_0})$ is reductive. There exists a connection $B\in {\cal A}(Q)$ such that:
\begin{enumerate}
\item $((\nabla^{B}_\MM)^{\otimes k}\otimes \nabla^{B}_\EE) (\sigma^{(k)}_{B_0})=0\ \hbox{ for } \ 0\leq k\leq k^\sigma+1$.
\item 	$(\nabla^{B}_{\MM}\otimes\nabla^B_\add)(B-B_0)=0$.
\end{enumerate}
	
\end{th-main}
Note that (supposing $M$ connected) the existence of a connection $B$ satisfying (1) implies the infinitesimally 	homogeneity of $\sigma$  with respect to  $B_0$. Indeed, for any pair $(x_1,x_2)\in M\times M$, parallel transport with respect to $B$ along a path  $\gamma:[0,1]\to M$  with $\gamma(0)=x_1$, $\gamma(1)=x_2$ defines a $G$-equivariant diffeomorphism $Q_{x_1}\to Q_{x_2}$ mapping $\{\sigma^{(k)}_{B_0} \big\}_{x_1}$ onto $\{\sigma^{(k)}_{B_0} \big\}_{x_2}$ for  any $0\leq  k\leq k_{x_1}^\sigma+1$.\\

The reductivity condition needed in the theorem (see section \ref{EXistenceAdConn}) is automatically satisfied when $G$ is compact.
We will see that this theorem can be used to prove Theorem \ref{LSLHTh} stated above. Moreover, it provides a general approach  for the study of locally homogeneous  objects by investigating their corresponding infinitesimally homogeneous analogues. In each specific case we will choose an appropriate system of data 
$$(M,Q,\rho, f:Q\to L(M), B_0,\sigma)$$
 and, using Theorem \ref{ConnectionB-Th}, we will prove in each case a result of the type ``locally homogeneous is equivalent to infinitesimally homogeneous, and is equivalent to globally homogeneous  in the complete, simply connected case". \vspace{2mm}

Section \ref{InfHomogSect} of this article is devoted to the proof of Theorem \ref{ConnectionB-intro}.   In section \ref{ApplicationS}, this result will be first used to prove known Ambrose-Singer type theorems:
\begin{enumerate}
\item The classical Ambrose-Singer theorem (Theorem \ref{ASTheorem}).
\item  Kiri\v{c}enko's theorem for locally homogeneous systems $(g,P_1,\dots,P_k)$, where $g$ is a Riemannian metric, and $P_i$ are tensor fields (Theorem \ref{KiriTh}). In particular we will consider the case of locally homogeneous almost Hermitian manifolds (Theorem \ref{AS-Local-verionHer}).
\item  Opozda's theorem on locally homogeneous $G$-structures (Theorem \ref{Opzda}).
\end{enumerate}

Note that, using a lemma based on elliptic regularity (see Lemma \ref{CoAnnConn2}), we show that the analyticity condition in  Opozda's theorem is not necessary. 

At the end of the section we prove a new Ambrose-Singer type theorem which concerns  locally homogeneous pairs $(g,s)$ consisting of a Riemannian metric and a spinor on a spin Riemannian manifold $M$. This result is not a special case of Kiri\v{c}enko's theorem, because a spinor on $M$ cannot be considered as a tensor field. Indeed, $s$ is a section in the spinor bundle, which is a vector bundle associated with the principal bundle of the fixed spin structure, not with the frame bundle $L(M)$.  This shows already the advantage of our formalism (which uses a general principal $G$-bundle $Q$  instead of the classical $L(M)$).

 Section 4 is dedicated to locally symmetric (homogeneous) triples in the sense of Definition \ref{DefSymLH}. We will prove Theorem \ref{LSLHTh}, and we will explain how this result can be used for the classification of the locally symmetric (homogeneous) triples on a given compact Riemannian manifold in terms of globally symmetric (homogeneous) triples on its universal cover.  The last subsection is dedicated to local homogeneity in the framework of $\Spin^c$-manifolds. We introduce and study natural  local homogeneity conditions for   pairs $(\Lambda,A)$ consisting of a $\Spin^c$-structure and a Levi-Civita lifting $\Spin^c$-connection, and for  triples $(\Lambda,A,s)$ where $(\Lambda,A)$ is a above, and $s$ is a $\Spin^c$-spinor.

\section{An Ambrose-Singer theorem for infinitesimally homogeneous sections}\label{InfHomogSect}

In this section we will give the proof    Theorem \ref{ConnectionB-Th}.

\subsection{A Leibniz formula}

Let $M$ be a differentiable  manifold of dimension $n$ and $L(M)\to M$ be its frame bundle. Let $G$ be a Lie group and let $r:G\to \GL(n)$ be a morphism of Lie groups,  $\pi:Q\to M$ be a principal $G$-bundle over $M$.
%
%\footnote{The results of this section will be applied later taking $\hat K=\O(n)\times K$, $\hat P=\O(M)\times_M P$, where $P$ is a principal $K$-bundle over $M$ as in Chapter \ref{introd}. This justifies the notations $\hat K$, $\hat P$ used in this section. } over $M$, and $f:\hat P\to L(M)$ be  an $r$-morphism of principal bundles.  
%
 Moreover, let $V$ be a finite dimensional vector space,    $\rho:G\to \GL(V)$ be a morphism of Lie groups, and $E\coloneqq Q\times_\rho V$ be the associated vector bundle. Put
$$W_{ijpq}\coloneqq (\R^n) ^{\otimes i}\otimes (\R^{n*})^{\otimes j} \otimes  V^{\otimes p}\otimes  V^{*\otimes q}\,,
$$
and let $R:G\to \GL(W_{ijpq})$ be the linear representation induced by $r$ and $\rho$. Let $\g$ be the Lie algebra of $G$ and consider the Lie algebra morphism $\g\to \gl(W_{ijpq})$. This infinitesimal action defines a $G$-invariant pairing
\begin{equation}\label{pairingTensors}
\g\times 	W_{ijpq}\to W_{ijpq}\,,
\end{equation}
which induces a paring of associated vector bundles
\begin{equation}\label{pairingBundles}\ad(Q)\times \big(T_M^{\otimes i}\otimes (\Lambda^1_M)^{\otimes j}\otimes E^{\otimes p}\otimes E^{\otimes *q} \big) \to   T_M^{\otimes i}\otimes (\Lambda^1_M)^{\otimes j}\otimes E^{\otimes p}\otimes E^{\otimes *q}  
\end{equation}
The pairings (\ref{pairingTensors}), (\ref{pairingBundles}) will be denoted by $(b,\eta)\mapsto b\cdot\eta$ to save on notations. For instance, if $b\in \ad(Q)$ and $\eta\in \{\Lambda^{1}_M\}^{\otimes j}\otimes  E $ then the pairing
\begin{equation}\label{pairingSpecial} \ad(Q)\times_M \big( \{\Lambda^{1}_M\}^{\otimes j}\otimes  E\big)\textmap{\cdot}  \{\Lambda^{1}_M\}^{\otimes j}\otimes  E	
\end{equation}
is given by the formula
$$(b\cdot \eta)(w_1,\dots,w_j)\coloneqq b\cdot  (\eta(w_1,\dots,w_j))-\sum_{i=1}^j \eta(w_1,\dots, b\cdot w_i,\dots,w_j)\,.
$$
The fact that the pairing (\ref{pairingTensors}) is $G$-invariant, has an important consequence:
\begin{re}\label{parallelPairing}
The pairing of associated vector bundles given in (\ref{pairingBundles}) is parallel with respect to any connection   on $Q$.	
\end{re}
We shall also need the pairing
\begin{equation}\label{new-pairing}
\big(\{\Lambda^1_M\}^{\otimes k}\otimes\ad({Q})\big)\times_M \big( \{\Lambda^{1}_M\}^{\otimes j}\otimes  E\big)\textmap{\cdot}  \{\Lambda^{1}_M\}^{\otimes (j+k)}\otimes  E	
\end{equation}
given by
$$\big(u\otimes b)\cdot \eta\coloneqq u\otimes   (b\cdot \eta)\,.
$$
Explicitly, for  $\beta\in \{\Lambda^1_x\}^{\otimes k}\otimes\ad({Q}_x)$ and $\eta\in \{\Lambda^{1}_x\}^{\otimes j}\otimes  E_x$, one has
\begin{equation}\label{new-pairing-formula}
\begin{split}
&(\beta\cdot \eta) (v_1,\dots,v_k,w_1,\dots,w_j)=\big\{\beta(v_1,\dots,v_k)\cdot \eta\big\}(w_1,\dots,w_j) =\\
&= \beta(v_1,\dots,v_k)\cdot  \eta(w_1,\dots,w_j) -\sum_{i=1}^j \eta (w_1,\dots, \beta(v_1,\dots,v_k)\cdot w_i,\dots,w_j) 
\,. 
\end{split}	
\end{equation}
Note also that, if  $\beta=(\omega_1\otimes\dots\otimes\omega_k)\otimes b $ is a tensor monomial  with $\omega_i\in \Lambda^1_x$ and $b\in\ad(Q_x)$, then
$$\big((\omega_1\otimes\dots\otimes\omega_k)\otimes b\big)\cdot \eta=(\omega_1\otimes\dots\otimes\omega_k)\otimes (b\cdot \eta)\,.
$$

Let $B$ be a connection on the principal $G$-bundle $Q$ as above and  $ f : Q\to L(M)$ be an $r$-morphism of principal bundles over $M$. Let $\nabla^B_\MM$ denote the linear connection on the tangent bundle $T_M$ associated with $f_*(B)$. This connection corresponds  to the linear connection associated with $B$ on the vector bundle $Q\times_r \R^n\simeq T_M$. Let $\nabla^B_\EE$, $\nabla^B_\add$ denote the linear connections on the vector bundles $E=Q\times_\rho V$, respectively $\ad(Q)=Q\times_\ad \g$ which are associated with $B$. We also need the linear connection $\nabla^B_{ijpq}$ on the vector bundle $T_M^{\otimes i}\otimes (\Lambda^1_M)^{\otimes j}\otimes E^{\otimes p}\otimes E^{\otimes *q}$ associated with $B$. This connection can be written as
$$\nabla^B_{ijpq}=(\nabla^B_\MM)^{\otimes i}\otimes (\nabla^B_\MM)^{*\otimes j}\otimes (\nabla^B_\EE)^{\otimes p}\otimes (\nabla^B_\EE)^{*\otimes q}\,.
$$
Taking into account Remark \ref{parallelPairing} it follows
\begin{re}\label{LeibnizRe}	
For any $x\in M$ and tangent vector $\xi\in T_xM$ the following Leibniz rule  holds:
$$\nabla^B_{ijpq,\xi}(b\cdot\eta)=(\nabla^B_{\add,\xi} b)\cdot\eta +b\cdot \nabla^B_{ijpq,\xi}\eta\,.
$$
\\
In particular, for any pairs $(b,v)\in \Gamma(\ad({Q}))\times {\mathfrak{X} }(M)$ one has
$$\nabla^B_{\MM,\xi}(b\cdot v)=	(\nabla^B_{\add,\xi} b)\cdot v+b\cdot (\nabla^B_{\MM,\xi} v)\,,
$$
and for any $(b,y)\in \Gamma(\ad(Q))\times\Gamma(E)$
$$\nabla^B_{\EE,\xi}(b\cdot y)=	(\nabla^B_\add b)\cdot y+b\cdot (\nabla^B_\EE y)\,.
$$
\end{re}

The tensor product of connections induces the connection  $(\nabla^{B}_\MM)^{\otimes j}\otimes \nabla^{B}_\EE$ on the vector bundle $\{\Lambda^{1}_M\}^{\otimes j}\otimes  E $ and the connection $(\nabla^{B}_\MM)^{\otimes j}\otimes \nabla^{B}_\add$ on the vector bundle  $\{\Lambda^{1}_M\}^{\otimes j}\otimes  \ad({Q}) $.
%
%$$\eta\in \Gamma\big(  \{\Lambda^{1}_M\}^{\otimes j}\otimes  E \big),\ \beta\in \Gamma\big(  \{\Lambda^{1}_M\}^{\otimes j}\otimes  \ad({\hat P}) \big)$$
%%
%we put
%%
%\begin{equation}\label{tensor-conn}
%\nabla^B_{\ME}\eta \coloneqq \big((\nabla^{B}_\MM)^{\otimes j}\otimes \nabla^{B}_\EE\big) \eta\,,\ \nabla^B_{\Mad}\beta \coloneqq \big((\nabla^{B}_\MM)^{\otimes j}\otimes \nabla^{B}_\add\big) \beta\,,	
%\end{equation}
%hence we use the tensor product connections $(\nabla^{B}_\MM)^{\otimes j}\otimes \nabla^{B}_\EE$, $(\nabla^{B}_\MM)^{\otimes j}\otimes \nabla^{B}_\add$.  Applying successively  the  operator $\nabla^B_\ME$, we obtain, for any section $\sigma\in\Gamma(E)$, the $j$-th covariant derivative
%%
%$$(\nabla^B_\ME)^{j}\sigma\in  \Gamma\big( \{\Lambda^{1}_M\}^{\otimes j}\otimes  E\big)\,.
%$$
The pairing (\ref{new-pairing}) will be used to obtain the following variation formula for the connection $(\nabla^{B}_\MM)^{\otimes j}\otimes \nabla^{B}_\EE$  with respect to $B$: 
\begin{re}\label{variation}
Let $B\in {\cal A}(Q)$ and $\beta\in A^1(M,\ad(Q))$. Put $B'\coloneqq B+\beta$. For any $\eta\in \Gamma \big( \{\Lambda^{1}_M\}^{\otimes j}\otimes  E\big)$ one has
\begin{equation}\label{variationEq}
\big((\nabla^{B'}_\MM)^{\otimes j}\otimes \nabla^{B'}_\EE\big)\eta	=\big((\nabla^{B}_\MM)^{\otimes j}\otimes \nabla^{B}_\EE\big)\eta + \beta\cdot \eta\,.
\end{equation}

\end{re}
 Explicitly, for any $x\in M$ and $\xi\in T_xM$ 
$$\big((\nabla^{B'}_\MM)^{\otimes j}\otimes \nabla^{B'}_\EE\big)_\xi\eta=\big((\nabla^{B}_\MM)^{\otimes j}\otimes \nabla^{B}_\EE\big)_\xi\eta+\beta(\xi)\cdot \eta\,.
$$

With the notations introduced above one can prove the following Leibniz formula.
\begin{lm} \label{LeibnizLemma} For any 	$\beta\in \Gamma\big(\{\Lambda^1_M\}^{\otimes k}\otimes\ad({Q})\big)$, $\eta\in \Gamma\big( \{\Lambda^{1}_M\}^{\otimes j}\otimes  E \big)$, and any tangent vector $\xi\in T_xM$ one has
\begin{equation}\label{LeibnizFormula}
\big((\nabla^{B}_\MM)^{\otimes (k+j)}\otimes \nabla^{B}_\EE\big)_\xi\big(\beta\cdot \eta\big)=\big(\big((\nabla^{B}_\MM)^{\otimes k}\otimes \nabla^{B}_\add\big)_\xi \beta\big)\cdot \eta+ \beta\cdot \big((\nabla^{B}_\MM)^{\otimes j}\otimes \nabla^{B}_\EE\big)_\xi\eta\,.	
\end{equation}
\begin{proof} We give the proof in the case $k=1$, which will be used later. We may suppose that $\beta$ is tensor monomial, so it has the form $\beta=\omega\otimes b$, where $\omega\in\Gamma(\Lambda^1_M)$, and $b\in \Gamma(\ad(Q))$. Using Remark \ref{LeibnizRe} we obtain
\begin{equation*}
\begin{split}
\big((\nabla^{B}_\MM)^{\otimes (1+j)}\otimes  \nabla^{B}_\EE  \big)_\xi &\big(\beta\cdot \eta\big)=\big((\nabla^{B}_\MM)^{\otimes (1+j)}\otimes \nabla^{B}_\EE\big)_\xi\big((\omega\otimes b)\cdot \eta\big)\\
=& \big((\nabla^{B}_\MM)\otimes((\nabla^{B}_\MM)^{\otimes j}\otimes \nabla^{B}_\EE)\big)_\xi\big(\omega\otimes (b\cdot \eta)\big)\\
=&\nabla^{B}_{\MM,_\xi}\omega\otimes(b\cdot\eta)+\omega\otimes\big((\nabla^{B}_{\add,\xi}b)\cdot \eta+b\cdot((\nabla^{B}_\MM)^{\otimes j}\otimes \nabla^{B}_\EE)_\xi\eta\big)\\
=&\big(\nabla^{B}_{\MM,_\xi}\omega\otimes b+\omega\otimes \nabla^{B}_{\add,\xi}b\big)\cdot \eta +(\omega\otimes b)\cdot ((\nabla^{B}_\MM)^{\otimes j}\otimes \nabla^{B}_\EE)_\xi\eta\big)\\
=&\big(\big(\nabla^{B}_\MM \otimes \nabla^{B}_\add\big)_\xi \beta\big)\cdot \eta+\beta\cdot ((\nabla^{B}_\MM)^{\otimes j}\otimes \nabla^{B}_\EE)_\xi\eta\big)\,.
\end{split}
\end{equation*}

\end{proof}
	
\end{lm}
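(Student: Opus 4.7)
The plan is to reduce to the case of a tensor monomial $\beta=\omega\otimes b$ with $\omega=\omega_1\otimes\dots\otimes\omega_k\in\Gamma((\Lambda^1_M)^{\otimes k})$ and $b\in\Gamma(\ad Q)$, using the $C^\infty(M)$-bilinearity of both sides of (\ref{LeibnizFormula}) in $\beta$ and $\eta$; the linearity of the pairing (\ref{new-pairing}) over $\Gamma((\Lambda^1_M)^{\otimes k})$ lets one peel off the form factor. For such a $\beta$, the identity $(\omega\otimes b)\cdot\eta=\omega\otimes(b\cdot\eta)$ exhibits $\beta\cdot\eta$ as a pure tensor product, and the ordinary tensor-product Leibniz rule for $(\nabla^B_\MM)^{\otimes(k+j)}\otimes\nabla^B_\EE$ splits its $\xi$-derivative into $(\nabla^B_{\MM,\xi}\omega)\otimes(b\cdot\eta)+\omega\otimes\nabla^B_{\EE\text{-type},\xi}(b\cdot\eta)$.

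To the second piece I would apply Remark \ref{LeibnizRe}, giving $\nabla^B_{\EE\text{-type},\xi}(b\cdot\eta)=(\nabla^B_{\add,\xi}b)\cdot\eta+b\cdot\big((\nabla^B_\MM)^{\otimes j}\otimes\nabla^B_\EE\big)_\xi\eta$. Reassembling, the two terms carrying a derivative of a factor of $\beta$ are $(\nabla^B_{\MM,\xi}\omega)\otimes(b\cdot\eta)+\omega\otimes(\nabla^B_{\add,\xi}b)\cdot\eta$, which by one more application of the tensor-product Leibniz rule, now for the connection $(\nabla^B_\MM)^{\otimes k}\otimes\nabla^B_\add$ on $(\Lambda^1_M)^{\otimes k}\otimes\ad(Q)$, equal $\big((\nabla^B_\MM)^{\otimes k}\otimes\nabla^B_\add\big)_\xi(\omega\otimes b)\cdot\eta=\big((\nabla^B_\MM)^{\otimes k}\otimes\nabla^B_\add\big)_\xi\beta\cdot\eta$; the remaining term is $\omega\otimes b\cdot(\dots)=\beta\cdot\big((\nabla^B_\MM)^{\otimes j}\otimes\nabla^B_\EE\big)_\xi\eta$, as required.

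The main obstacle is not conceptual but organizational: one must keep straight three nested tensor-product Leibniz rules (one on $(\Lambda^1_M)^{\otimes(k+j)}\otimes E$, one on $(\Lambda^1_M)^{\otimes k}\otimes\ad(Q)$, and the implicit one on $(\Lambda^1_M)^{\otimes j}\otimes E$) and align them with the two pairings (\ref{pairingBundles}) and (\ref{new-pairing}). The key conceptual input that makes the bookkeeping automatic is Remark \ref{parallelPairing}: because the base pairing $\ad(Q)\times(\text{tensor bundle})\to(\text{tensor bundle})$ is parallel with respect to any connection on $Q$, the Leibniz rule for it is nothing but the Leibniz rule for a parallel bilinear pairing, which then propagates through tensoring with $(\Lambda^1_M)^{\otimes k}$ by naturality. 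The case $k=0$ is Remark \ref{LeibnizRe} itself, and general $k$ can alternatively be obtained by induction on $k$ from the case $k=1$ that is explicitly written out, using the split $\omega_1\otimes\dots\otimes\omega_k\otimes b=\omega_1\otimes(\omega_2\otimes\dots\otimes\omega_k\otimes b)$.
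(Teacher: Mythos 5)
Your proof is correct and follows essentially the same route as the paper's: reduce to tensor monomials $\beta=\omega\otimes b$, use the identity $(\omega\otimes b)\cdot\eta=\omega\otimes(b\cdot\eta)$ to split the covariant derivative via the tensor-product Leibniz rule, apply Remark \ref{LeibnizRe} to the $b\cdot\eta$ factor, and regroup. The only difference is one of scope: the paper explicitly writes out only the case $k=1$ (which is all it needs later), whereas you handle general $k$ directly and also note the induction on $k$; both are straightforward extensions of the same computation.
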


\subsection{The existence of an adapted connection}\label{EXistenceAdConn}

\def\Gr{\mathbb{G}}
\def\codim{\mathrm{codim}}

Let $Q$ be a principal $G$-bundle on $M$, and $H$ be a closed subgroup of $G$. We will need the following   general result which classifies all the reductions of $Q$ to subgroups of $G$ which are conjugate to $H$. For a class $u=gH\in G/H$, denote by $H_u$ the conjugate $H_u\coloneqq gHg^{-1}=\iota_g(H)$ (which obviously depends only on $u=[g]_H$). In general, for a section $\varphi$ in an associated bundle with fiber $F$ we will denote by $\tilde \varphi$ the associated $F$-valued equivariant map.

\begin{lm} \label{RD-Lemma}
	Let $\pi:Q\to M$ be a principal $G$-bundle over a  manifold $M$, and let $H$ be a closed   subgroup of $G$. 
	\begin{enumerate}
	\item 	There is a bijection between the set of pairs $(u,P)$, where $P$ is a $H_u$-reduction of $Q$, and the   product $ (G/H)\times \Gamma(M,Q\times_{G} (G/H))$. The $H_u$-reduction associated with a pair $(u,\varphi)$ is the pre-image $\tilde\varphi^{-1}(u)$. 
	\item Let  	$\varphi\in \Gamma(M,Q\times_{G} (G/H))$, and $y\in Q$. The pre-image $\tilde\varphi^{-1}(\tilde\varphi(y))$ is an $H_{\tilde \varphi(y)}$-reduction of $Q$ which contains $y$. The subgroup $H_{\tilde \varphi(y)}$ coincides with the stabilizer of $\tilde \varphi(y)$ with respect to the left $G$-action on $G/H$.
	\end{enumerate}
\end{lm}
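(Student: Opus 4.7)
The plan is to work via the standard bijection between sections $\varphi\in\Gamma(M, Q\times_G(G/H))$ and $G$-equivariant maps $\tilde\varphi\colon Q \to G/H$, where $G$ acts on $Q$ from the right and on $G/H$ from the left, and equivariance reads $\tilde\varphi(qg) = g^{-1}\tilde\varphi(q)$. Both statements then reduce to coset computations in the fibers of $Q$.

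For part (2), I would fix $y\in Q$, set $u := \tilde\varphi(y)$, and use that $G$ acts simply transitively on the fiber $Q_{\pi(y)}$ to write an arbitrary element as $yg$. Then $\tilde\varphi(yg) = g^{-1}u$, so $yg\in\tilde\varphi^{-1}(u)$ iff $g\in\mathrm{Stab}_G(u)$. Writing $u = g_0H$, one checks directly that $\mathrm{Stab}_G(u) = g_0 H g_0^{-1} = H_u$, giving $\tilde\varphi^{-1}(u)\cap Q_{\pi(y)} = yH_u$, a single right $H_u$-coset containing $y$. Furthermore, since the left $G$-action on $G/H$ is transitive, for any other fiber $Q_{x'}$ and $y'\in Q_{x'}$ with $\tilde\varphi(y') = u'$, choosing $g\in G$ with $g^{-1}u' = u$ yields $y'g\in \tilde\varphi^{-1}(u)\cap Q_{x'}$. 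Hence $\tilde\varphi^{-1}(u)$ meets every fiber of $\pi$ in a single $H_u$-coset and is therefore an $H_u$-reduction of $Q$ containing $y$.

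For part (1), part (2) already shows that $(u,\varphi)\mapsto (u,\tilde\varphi^{-1}(u))$ lands in the target set. I would construct the inverse explicitly: given $(u,P)$ with $P$ an $H_u$-reduction of $Q$, define $\tilde\varphi_{u,P}\colon Q\to G/H$ by $\tilde\varphi_{u,P}(pg) := g^{-1}u$ for $p\in P$, $g\in G$. Well-definedness is the only nontrivial check: if $pg = p'g'$ in $Q$, then $p' = ph$ for a unique $h\in H_u$, so $g = hg'$ and $g^{-1}u = g'^{-1}h^{-1}u = g'^{-1}u$ because $h$ stabilizes $u$. Equivariance follows at once from the definition, and smoothness follows by using local sections of the principal $H_u$-bundle $P\to M$ to write $\tilde\varphi_{u,P}$ in local trivialisations. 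By construction $\tilde\varphi_{u,P}(p) = u$ for all $p\in P$, and the argument of (2) identifies $\tilde\varphi_{u,P}^{-1}(u)\cap Q_x$ with a single $H_u$-coset in each fiber, forcing $\tilde\varphi_{u,P}^{-1}(u) = P$. The two constructions are then mutually inverse by direct verification on a representative $pg$. The only substantive point in the whole argument is the identification $\mathrm{Stab}_G(u) = H_u$, which is simultaneously what makes $\tilde\varphi_{u,P}$ well-defined and what makes the fiberwise preimage in (2) a single coset; everything else is bookkeeping of left/right conventions.
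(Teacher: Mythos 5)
Your argument is correct and takes a different route from the paper. The paper's proof does not redo the fiberwise coset computation; instead it quotes the standard classification of $K$-reductions of $Q$ by sections of $Q\times_G (G/K)$ (Kobayashi--Nomizu, Proposition I.5.6), applied with $K=H_u$, and then reduces the variable target $\Gamma(M,Q\times_G(G/H_u))$ to the fixed one $\Gamma(M,Q\times_G(G/H))$ via the left-$G$-equivariant diffeomorphisms $r_u: G/H_u\to G/H$, $[\gamma]_{H_u}\mapsto[\gamma g]_H$. So the paper's proof is essentially a bookkeeping exercise on top of a cited result, whereas yours re-derives the underlying fact from scratch by a direct computation of stabilizers in each fiber. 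Both hinge on the same identity $\mathrm{Stab}_G(u)=H_u$: in the paper it is buried inside the cited KN proposition and the $H_u$-equivariance check for $r_u^{-1}\circ\tilde\varphi$, in yours it is surfaced explicitly. The trade-off is the usual one: your version is self-contained and makes the mechanism transparent (in particular, part (2) falls out for free, whereas the paper derives it implicitly), while the paper's version is shorter because it delegates the smoothness and reduction-theoretic technicalities to the reference. One small point worth making explicit if you wrote this up in full: the preimage $\tilde\varphi^{-1}(u)$ is not just a set meeting each fiber in one $H_u$-coset but a smooth subbundle; you gesture at this via local trivializations for $\tilde\varphi_{u,P}$, and a symmetric remark (e.g.\ that $\tilde\varphi$ is a submersion, or again a local-trivialization argument) is needed for the forward direction as well.
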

\begin{proof}

Let $K\subset G$  be a closed subgroup. By \cite[Proposition 5.6]{KN} we have a bijection between $\Gamma(M,Q\times_{G} (G/K))\times (G/K)$ and the set of $K$-reductions of $Q$ given by
$$
\varphi\mapsto \tilde\varphi^{-1}([e]_K)\,.
$$
Letting $u$ vary in $G/H$ we obtain a bijection $a:{\cal S}\to {\cal R}$ between the set of pairs
$$
{\cal S}\coloneqq  \big\{(u,\psi)| \ u\in G/H,\ \psi\in \Gamma(M,Q\times_{G} (G/H_u))\}  
$$
and the set of pairs
$$
{\cal R}\coloneqq \big\{(u,P)|\ u\in G/H,\ P \hbox{ is an $H_u$-reduction of $Q$}\big\}
$$
given by  $a(u,\psi)=\tilde\psi^{-1}([e]_{H_u})$. Note now that an element $u=[g]_H\in G/H$ defines a diffeomorphism of left $G$-spaces 
$$
r_u:G/H_u\to G/H\,,\ r_u([\gamma]_{H_u})\coloneqq [\gamma g]_H.$$
We obtain   a bijection
$$
b:(G/H)\times \Gamma(M,Q\times_{G} (G/H))\to {\cal S}\,,\ b(u,\varphi)\coloneqq (u,\psi_{u,\varphi})\,,
$$
where $\psi_{u,\varphi}\in \Gamma(M,Q\times_{G} (G/H_u))$ is the section defined by the $H_u$-equivariant map
$$
\tilde\psi_{u,\tilde\varphi}\coloneqq r_u^{-1}\circ \tilde\varphi\,.
$$
It is easy to check that $\tilde\psi_{u,\tilde\varphi}$ is $H_u$-equivariant if and only if $\tilde \varphi$ is $H$-equivariant. The composition $a\circ b$ will be a bijection $(G/H)\times \Gamma(M,Q\times_{G} (G/H))\to {\cal R}$ as claimed.
\end{proof}

Let $P\subset Q$ be the $H$-reduction of $Q$. A connection $B\in {\cal A}(Q)$ will be called compatible with $P$ if the restriction of $B$ to $P$ is tangent to $P$ (and hence defines a connection on $P)$.
\begin{re}\label{RemRed}
Let  $P\subset Q$ be the $H$-reduction associated with the pair  $(\varphi,u)$.	Then 
the direct image map \cite{KN} defines an affine embedding ${\cal A}(P)\hookrightarrow	{\cal A}(Q)$ whose image is the space of connections on $Q$ which are compatible with $P$, and whose associated linear map
is the inclusion $A^1(\ad(P))\subset A^1(\ad(Q))$.

\end{re}

Taking into account Remark \ref{RemRed}, in the presence of a fixed $H$-reduction $P\subset Q$, we will identify
a connection $B\in {\cal A}(P)$ with its image in ${\cal A}(Q)$, so any connection $B\in {\cal A}(P)$ will also be regarded as connection on $Q$.
 \vspace{3mm}

Let $\sigma$ be an infinitesimally homogeneous section with respect to $B_0\in {\cal A}(Q)$. We will define a closed subgroup $H^\sigma\subset G$ and a $H^\sigma$-reduction $P^\sigma \subset Q$ such that for any connection $B'$ on $P^\sigma$ one has 
$$((\nabla^{B'}_\MM)^{\otimes k}\otimes \nabla^{B'}_\EE)  \sigma^{(k)}_{B_0} =0\ \hbox{ for } 0\leq k\leq k^\sigma+1\,. $$
For $x\in M$ we will identify the fibre $L(M)_x$ of the frame bundle  $L(M)$ with the space of linear isomorphisms $\R^n\to T_xM$. Therefore, with the notations introduced at the beginning of this section, a point $q\in Q$ defines a linear isomorphism 
$$f(q):\R^n\to T_xM\,.$$
Using the $k$-order covariant derivative   $\sigma^{(k)}_{B_0}$ of $\sigma$ we obtain a $G$-equivariant map 
$$\varphi_k:Q\to L^k(\R^n, V)$$
defined by   the formula
$$\sigma^{(k)}_{B_0}(f(q)(\xi_1),\dots,f(q)(\xi_k))=[q,\varphi_k(q)(\xi_1,\dots,\xi_k)]\ \forall(\xi_1,\dots,\xi_k)\in(\R^n)^k\,.$$
In other words, $\varphi_k$ is the $G$-equivariant map $Q\to L^k(\R^n, V)$ associated with $\sigma^{(k)}_{B_0}$ regarded as a section in the associated bundle 
$$(\Lambda^1_M)^{\otimes k}\otimes E=Q\times_{G}  L^k(\R^n, V)\,.$$
Put $W\coloneqq \bigoplus_{k=0}^{k^\sigma+1}L^k(\R^n,V)$ and define a $G$-equivariant map $\Phi:Q\to W$ by 
	\begin{equation}\label{PHI-mq}
		\Phi(q)\coloneqq   (\varphi_k(q))_{0\leq k\leq k^\sigma+1}\,.
	\end{equation}
	 Since the section $\sigma\in\Gamma(E)$ is infinitesimally homogeneous, it follows that $\Phi(Q)$ is a  single $G$-orbit of $W$. Indeed, let $q_0\in Q$, and put $x_0\coloneqq \pi(q_0)$. For a  point $q\in Q$, let $x=\pi(q)$ and $\theta:Q_{x_0}\to Q_{x}$ be a $G$-equivariant isomorphism such that 
	\begin{equation} 
\theta^k\big(\big\{\sigma^{(k)}_{B_0} \big\}_{x_0}\big)=\big\{\sigma^{(k)}_{B_0}\big\}_{x} \hbox{  for }  0\leq  k\leq k^\sigma+1\,.	
\end{equation}
(see Definition \ref{InfHomSectDef}). This implies the equality
$$[\theta(q_0),\varphi_k(q_0)]=[q,\varphi_k(q)]
$$
in $Q_x\times_GL^k(\R^n, V)$. Choosing $a\in G$ such that $\theta(q_0)=qa$, we obtain  
$$\varphi_k(q)=a\varphi_k(q_0) \hbox{ for } 0\leq k\leq k^\sigma+1\,,$$
which shows that  $\Phi(q)\in G \Phi(q_0)$. Therefore $\Phi(Q)\subset G \Phi(q_0)$. Using the $G$-equivariance property of $\Phi$ we get $\Phi(Q)= G \Phi(q_0)$, as claimed.  Put
$$H^\sigma_{q_0}\coloneqq G_{\Phi(q_0)}\,,\ P^\sigma_{q_0}\coloneqq \Phi^{-1}(\Phi(q_0))\,.
$$
Using Lemma \ref{RD-Lemma}, it follows that $P^\sigma_{q_0}$ is a $H^\sigma_{q_0}$-reduction of $Q$.
Since $\Phi$ is obviously constant on $P^\sigma_{q_0}$, it follows that the restrictions $\resto{\varphi_k}{P^\sigma_{q_0}}$ are all constant on $P^\sigma_{q_0}$, so the corresponding sections will be parallel with respect to any connection  on $P^\sigma_{q_0}$. Therefore  
\begin{re}\label{parallel-comp-conn}
The sections $\sigma^{(k)}_{B_0}$ ($0\leq k\leq k^\sigma+1$) are  parallel with respect to any connection  on the bundle $P^\sigma_{q_0}$, so with respect to any connection on $Q$ which  is compatible  with $P^\sigma_{q_0}$.	In other words, for any $B\in {\cal A}(P^\sigma_{q_0})$ we have
\begin{equation}\label{sigma-k-parallel}
((\nabla^{B}_\MM)^{\otimes k}\otimes \nabla^{B}_\EE) (\sigma^{(k)}_{B_0})=0\ \hbox{ for } \ 0\leq k\leq k^\sigma+1\ 
\end{equation}

\end{re}

\begin{re}\label{NewRem}
For any $x\in M$ one has $\hg^\sigma_x(k^\sigma+1)=\ad(P^\sigma_{q_0})_x$.	
\end{re}
\begin{proof}
By definition $\hg^\sigma_x(k^\sigma+1)$ is the infinitesimal stabilizer in $\ad(Q_x)$ of  the system $(\{\sigma^{(k)}_{B_0}\}_x)_{0\leq k\leq k^\sigma+1}$, regarded as an element of the fiber $Q_x\times_G W$ of the associated vector bundle $Q\times_G W$. An element $q\in Q_x$ identifies $\ad(Q_x)$ with $\g$, the system $(\{\sigma^{(k)}_{B_0}\}_x)_{0\leq k\leq k^\sigma+1}$ with $\Phi(q)$  and the fiber $Q_x\times_GW$ with $W$. For $q\in P^\sigma_{q_0}\coloneqq \Phi^{-1}(\Phi(q_0))$, we have $\Phi(q)=\Phi(q_0)$, so  
$$\hg^\sigma_x(k^\sigma+1)=\{[q,u]|\ u\in \g_{\Phi(q_0)}\}=\{[q,u]|\ u\in \hg^\sigma_{q_0}\}=\ad(P^\sigma_{q_0})_x.
$$  
\end{proof}
Remark \ref{NewRem}  shows in particular that the union 
$$\hg^\sigma\coloneqq \union_{x\in M}  \hg^\sigma_x(k^\sigma+1)$$
 is a Lie algebra sub-bundle of $\ad(Q)$.

\begin{pr} \label{new-prop}
Suppose that $\sigma$ is  infinitesimally 	homogeneous  with respect to  $B_0$. Let $B'\in {\cal A}(Q)$ be a  connection  compatible with $P^\sigma_{q_0}$.
Then
\begin{enumerate}
\item The sub-bundle $\hg^\sigma\subset\ad(Q) $
 is  $\nabla^{B'}_{\ad}$-parallel.
\item  \label{newprop2}   One has
$ (\nabla^{B'}_{\MM}\otimes \nabla^{B'}_{\add})(B'-B_0)\in \Gamma(\Lambda^1_M\otimes\Lambda^1_M \otimes \hg^\sigma)\,.
$
\end{enumerate}

\end{pr}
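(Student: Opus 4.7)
The plan is to handle the two statements separately. The first will follow from the standard compatibility of associated-bundle connections under reductions, and the second from a short computation combining the variation formula (Remark \ref{variation}), the parallelism of Remark \ref{parallel-comp-conn}, and the Leibniz rule of Lemma \ref{LeibnizLemma}, together with the stabilization property built into the definition of $k^\sigma$.

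For the first statement, I would argue as follows. Since $B'$ is compatible with $P^\sigma_{q_0}$, it restricts to a connection on the $H^\sigma_{q_0}$-reduction $P^\sigma_{q_0}$. Under the natural identification
\[
\ad(P^\sigma_{q_0}) = P^\sigma_{q_0} \times_{H^\sigma_{q_0}} \hg^\sigma_{q_0} \hookrightarrow P^\sigma_{q_0} \times_{H^\sigma_{q_0}} \g = Q \times_G \g = \ad(Q),
\]
the connection $\nabla^{B'}_\add$ on $\ad(Q)$ restricts to the $\ad$-connection induced on $\ad(P^\sigma_{q_0})$ by the restriction of $B'$ to $P^\sigma_{q_0}$; in particular it preserves this sub-bundle. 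By Remark \ref{NewRem} the sub-bundle in question is precisely $\hg^\sigma$, which proves the claim.

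For the second statement, set $\beta \coloneqq B' - B_0 \in A^1(\ad(Q))$. The first step is to apply Remark \ref{variation} to $\eta = \sigma^{(k)}_{B_0}$ for $0 \leq k \leq k^\sigma + 1$. Using that $\big((\nabla^{B'}_\MM)^{\otimes k}\otimes \nabla^{B'}_\EE\big)\sigma^{(k)}_{B_0}$ vanishes by Remark \ref{parallel-comp-conn}, this yields the key identity
\[
\sigma^{(k+1)}_{B_0} = -\beta \cdot \sigma^{(k)}_{B_0}, \qquad 0 \leq k \leq k^\sigma + 1. \quad (\star)
\]
Next, I would differentiate $(\star)$ once more by applying $(\nabla^{B'}_\MM)^{\otimes (k+1)} \otimes \nabla^{B'}_\EE$ to both sides. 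For $k \leq k^\sigma$ the left-hand side is zero by Remark \ref{parallel-comp-conn}. The right-hand side, expanded via Lemma \ref{LeibnizLemma} with $\beta$ of tensor order $1$ and $\eta = \sigma^{(k)}_{B_0}$ of tensor order $k$, equals
\[
-\bigl((\nabla^{B'}_\MM \otimes \nabla^{B'}_\add) \beta\bigr) \cdot \sigma^{(k)}_{B_0} \;-\; \beta \cdot \bigl((\nabla^{B'}_\MM)^{\otimes k} \otimes \nabla^{B'}_\EE\bigr) \sigma^{(k)}_{B_0},
\]
whose second summand again vanishes by Remark \ref{parallel-comp-conn}. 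Hence
\[
\bigl((\nabla^{B'}_\MM \otimes \nabla^{B'}_\add) \beta\bigr) \cdot \sigma^{(k)}_{B_0} = 0 \qquad \text{for } 0 \leq k \leq k^\sigma.
\]
Evaluating on $v_1, v_2 \in T_xM$, this says that $(\nabla^{B'}_\MM \otimes \nabla^{B'}_\add)\beta(v_1, v_2) \in \ad(Q_x)$ annihilates $\{\sigma^{(k)}_{B_0}\}_x$ for all $0 \leq k \leq k^\sigma$, so it lies in $\hg^\sigma_x(k^\sigma)$. By the definition of $k^\sigma$ this coincides with $\hg^\sigma_x(k^\sigma + 1)$, which by Remark \ref{NewRem} equals $\hg^\sigma_x$, completing the argument.

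The main bookkeeping point is that $(\star)$ must be pushed all the way to $k = k^\sigma + 1$ so that, after one more differentiation, the left-hand side still vanishes in the range $0 \leq k \leq k^\sigma$; this is precisely what the ``$+1$'' in Remark \ref{parallel-comp-conn} buys. Once the range is respected, the stabilization built into the definition of $k^\sigma$ upgrades the containment $\hg^\sigma_x(k^\sigma)$ to $\hg^\sigma_x$, and no further analysis is needed.
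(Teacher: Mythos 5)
Your proposal is correct, and for part (2) it reproduces the paper's argument essentially verbatim: differentiate the identity $\sigma^{(k+1)}_{B_0}+\beta\cdot\sigma^{(k)}_{B_0}=0$ (obtained from Remarks \ref{variation} and \ref{parallel-comp-conn}) once more with the $B'$-connection, use the Leibniz rule of Lemma \ref{LeibnizLemma}, let the range $0\le k\le k^\sigma$ kill both surviving terms, and then invoke the stabilization $\hg^\sigma_x(k^\sigma)=\hg^\sigma_x(k^\sigma+1)=\hg^\sigma_x$ via Remark \ref{NewRem}. You are also careful to apply the operator $(\nabla^{B'}_\MM)^{\otimes(k+1)}\otimes\nabla^{B'}_\EE$ of the correct tensor order, whereas the paper writes $(\nabla^{B'}_\MM)^{\otimes k}\otimes\nabla^{B'}_\EE$ in that step --- a harmless notational slip in the text that your version fixes.

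For part (1) your route differs slightly from the paper's. The paper argues via parallel transport: for a path $\nu$, it combines the Leibniz rule of Remark \ref{LeibnizRe} with the $B'$-parallelism of the sections $\sigma^{(k)}_{B_0}$ (Remark \ref{parallel-comp-conn}) to show directly that $\nabla^{B'}_\add$-parallel transport carries $\hg^\sigma_{\nu(0)}$ isomorphically onto $\hg^\sigma_{\nu(1)}$. You instead identify $\hg^\sigma$ with $\ad(P^\sigma_{q_0})$ via Remark \ref{NewRem} and appeal to the general fact that a connection compatible with a reduction preserves the adjoint sub-bundle of that reduction. Both are valid and rely on the same hypothesis (compatibility of $B'$ with $P^\sigma_{q_0}$); your argument is arguably the more conceptual one, bypassing the pointwise Leibniz computation, while the paper's stays entirely within the infinitesimal-stabilizer picture it has already set up. No gap in either case.
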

\begin{proof} (1) Let $\nu:[0,1]\to M$ be a smooth path in $M$. By  Remarks \ref{LeibnizRe} and \ref{parallel-comp-conn}, the parallel transport with respect to the connection 	$\nabla^{B'}_{\ad}$  maps isomorphically $\hg^\sigma_{\nu(0)}$ onto  $\hg^\sigma_{\nu(1)}$.
\vspace{2mm}\\
(2) Put $\beta\coloneqq B'-B_0\in A^1(\ad(Q))$. Using Remark \ref{variation} we obtain, for  $0\leq k\leq k^\sigma+1$
\begin{equation}\label{Var} 
0=((\nabla^{B'}_\MM)^{\otimes k}\otimes \nabla^{B'}_\EE)  \sigma^{(k)}_{B_0} =((\nabla^{B_0}_\MM)^{\otimes k}\otimes \nabla^{B_0}_\EE)  \sigma^{(k)}_{B_0} +\beta\cdot \sigma^{(k)}_{B_0}=\sigma^{(k+1)}_{B_0}+\beta\cdot \sigma^{(k)}_{B_0}\,.
\end{equation}
Let $x\in M$, $\xi\in T_x M$. Taking  $0\leq k\leq k^\sigma$,   applying $((\nabla^{B'}_\MM)^{\otimes k}\otimes \nabla^{B'}_{\EE})_\xi$ on both terms of (\ref{Var}), noting that for these values of $k$  the first term on the right will still vanish, and using the Leibniz rule (Lemma \ref{LeibnizLemma}), one obtains
$$\big((\nabla^{B'}_{\MM}\otimes \nabla^{B'}_{\add})_\xi \beta\big)\cdot \sigma^{(k)}_{B_0}=0 \ \hbox{ for }  0\leq k\leq k^\sigma\,.
$$
 Taking into account formula (\ref{new-pairing-formula}) it follows that, for any $v\in T_xM$ one has 
 $$\big(\big((\nabla^{B'}_{\MM}\otimes \nabla^{B'}_{\add})_\xi \beta\big)(v)\big)\cdot \sigma^{(k)}_{B_0}=0\,.$$
  Therefore  for any $(\xi,v)\in T_xM\times T_xM$ one has  
  $$\big((\nabla^{B'}_{\MM}\otimes \nabla^{B'}_{\add}) \beta\big)(\xi,v)\in \hg^\sigma_x(k^\sigma_x)=\hg^\sigma_x(k^\sigma_x+1)\,,$$
which shows that  
$$(\nabla^{B'}_{\MM}\otimes \nabla^{B'}_{\add}) \beta\in   \Gamma(\Lambda^1_M\otimes\Lambda^1_M \otimes \hg^\sigma(k^\sigma+1))=\Gamma(\Lambda^1_M\otimes\Lambda^1_M \otimes \hg^\sigma)\,.$$
\end{proof}

 We recall that a pair $(G,H)$,  where $G$ is a Lie group, and $H\subset G$ a closed subgroup, is called reductive if $\hg$ admits an $\ad_H$-invariant complement in $\g$  \cite[Example 4, p. 165]{SaWa}.  Note that 
 \begin{re}\label{CompRed} Any pair $(G,H)$ with $H$ compact is reductive. In particular, when $G$ is compact, any pair $(G,H)$ with $H\subset G$ a closed subgroup, is reductive. 
 \end{re}

Let $\sigma$ be a infinitesimally homogeneous section with respect to $B_0$. The following result shows that, assuming that $(G,H^\sigma_{q_0})$ is reductive, any connection $B'\in {\cal A}(P^\sigma_{q_0})$  can be modified,   such that the obtained connection $B$  satisfies       
$$(\nabla^{B}_{\MM}\otimes \nabla^B_{\add})(B-B_0)=0,$$
which is   a much stronger property than Proposition \ref{new-prop} (2).  In order to prove this note first that
\begin{re}\label{subbundle-kg}
Suppose that $(G,H^\sigma_{q_0})$ is reductive, and let $\kg$ be an $\ad_{H^\sigma_{q_0}	}$-invariant complement of $\hg^\sigma_{q_0}$ in $\g$.	The direct sum decomposition 
$$\g=\hg^\sigma_{q_0}\oplus\kg\,,
$$
is $\ad_{H^\sigma_{q_0}}$-invariant.  Putting $\kg^\sigma\coloneqq P^\sigma_{q_0}\times_{H^\sigma_{q_0}}\kg$ we obtain a vector bundle decomposition 
$$\ad(Q)=Q\times_G\g=P^\sigma_{q_0}\times_{H^\sigma_{q_0}}\g  
=\hg^\sigma\oplus \kg^\sigma,
$$
which is parallel with respect to any connection on $P^\sigma_{q_0}$ (so to any connection on $Q$ which is compatible with $P^\sigma_{q_0}$).  

Note that the Lie algebra isomorphism $\theta_\g: \ad(Q)_{x_1}\to   \ad(Q)_{x_2}$ associated with  a $G$-isomorphism $\theta:Q_{x_1}\to Q_{x_2}$ satisfying (\ref{LocHomCond1})  maps isomorphically $\kg^\sigma_{x_1}$ onto $\kg^\sigma_{x_2}$. 
\end{re}

  \begin{pr}\label{ConnectionB-Pr}
Suppose that $\sigma$ is  infinitesimally 	homogeneous  with respect to  $B_0$, and the pair $(G,H^\sigma_{q_0})$ is reductive. Then there exists a   connection $B\in {\cal A}(P^\sigma_{q_0})$ such that
 \begin{equation} \label{B-B0-new}  (\nabla^{B}_{\MM}\otimes\nabla^B_\add)(B-B_0)=0\,.
  \end{equation}   
\end{pr}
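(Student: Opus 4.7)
The plan is to exploit the canonical splitting $\ad(Q)=\hg^\sigma\oplus\kg^\sigma$ supplied by Remark \ref{subbundle-kg} in order to produce a distinguished $B\in{\cal A}(P^\sigma_{q_0})$ whose difference with $B_0$ lies entirely in the ``complementary'' subbundle $\kg^\sigma$. The idea is that, once such a $B$ is fixed, the object $(\nabla^{B}_{\MM}\otimes\nabla^B_\add)(B-B_0)$ will be forced to take values simultaneously in $\hg^\sigma$ and in $\kg^\sigma$, hence to vanish.

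I would start from any reference connection $B'\in{\cal A}(P^\sigma_{q_0})$ and decompose
\[
B'-B_0=(B'-B_0)_{\hg^\sigma}+(B'-B_0)_{\kg^\sigma}
\]
according to the splitting above. Since $(B'-B_0)_{\hg^\sigma}\in A^1(\hg^\sigma)=A^1(\ad(P^\sigma_{q_0}))$, Remark \ref{RemRed} guarantees that
\[
B\coloneqq B'-(B'-B_0)_{\hg^\sigma}
\]
is again a connection on $P^\sigma_{q_0}$, and by construction
\[
B-B_0=(B'-B_0)_{\kg^\sigma}\in A^1(\kg^\sigma).
\]

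To verify (\ref{B-B0-new}) for this choice, I would combine two observations. On one hand, Proposition \ref{new-prop}(\ref{newprop2}) applied to $B\in{\cal A}(P^\sigma_{q_0})$ yields
\[
(\nabla^{B}_{\MM}\otimes\nabla^B_\add)(B-B_0)\in\Gamma(\Lambda^1_M\otimes\Lambda^1_M\otimes\hg^\sigma).
\]
On the other hand, since $B$ is compatible with $P^\sigma_{q_0}$, Remark \ref{subbundle-kg} tells us that $\nabla^B_\add$ preserves the decomposition $\ad(Q)=\hg^\sigma\oplus\kg^\sigma$; as $B-B_0$ takes values in $\kg^\sigma$ by construction, the same holds for its covariant derivative, so
\[
(\nabla^{B}_{\MM}\otimes\nabla^B_\add)(B-B_0)\in\Gamma(\Lambda^1_M\otimes\Lambda^1_M\otimes\kg^\sigma).
\]
Intersecting these two inclusions in $\hg^\sigma\cap\kg^\sigma=0$ delivers (\ref{B-B0-new}).

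I do not anticipate a substantive obstacle: the reductivity hypothesis on $(G,H^\sigma_{q_0})$ is precisely what supplies the parallel $\ad_{H^\sigma_{q_0}}$-invariant complement $\kg^\sigma$, Remark \ref{RemRed} lets the ``kill the $\hg^\sigma$-component'' adjustment stay inside ${\cal A}(P^\sigma_{q_0})$, and Proposition \ref{new-prop}(\ref{newprop2}) has already done the hard work of confining $(\nabla^{B}_{\MM}\otimes\nabla^B_\add)(B-B_0)$ to the $\hg^\sigma$-summand. The entire argument reduces to this transversality trick.
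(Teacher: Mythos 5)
Your proposal is correct and follows essentially the same route as the paper: starting from an arbitrary $B'\in{\cal A}(P^\sigma_{q_0})$, subtracting the $\hg^\sigma$-component of $B'-B_0$, and then squeezing $(\nabla^{B}_{\MM}\otimes\nabla^B_\add)(B-B_0)$ between the $\hg^\sigma$-summand (via Proposition \ref{new-prop}(\ref{newprop2})) and the $\kg^\sigma$-summand (via parallelism of the reductive splitting). The only cosmetic difference is that you make the appeal to Remark \ref{RemRed} explicit when checking that the adjustment stays in ${\cal A}(P^\sigma_{q_0})$, which the paper leaves implicit.
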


\begin{proof} Let $B'\in {\cal A}(P^\sigma_{q_0})$, so that Proposition \ref{new-prop} applies.   The problem is to find $\beta \in \Gamma(\Lambda^1_M\otimes \hg^\sigma)$ such that the equation \ref{B-B0-new} holds for $B=B'+\beta$.

We have the following direct sum decompositions
\begin{align}
\ad(Q)&=&{\hg}^\sigma\oplus \kg^\sigma \label{decAd},\\
 \Lambda^1\otimes \ad(Q)&=&(\Lambda^1_M\otimes {\hg}^\sigma)\oplus (\Lambda^1_M\otimes \kg^\sigma)\,. \label{decLambdaAd}	
\end{align}
 Using the splitting $(\ref{decLambdaAd})$ we can decompose $B'-B_0\in  \Lambda_M^1\otimes \ad(Q)$ in a unique way as follows
$$B'-B_0=b_{\hg}+b_{\kg}\,.
$$
where $b_{\hg}\in \Lambda^1_M\otimes {\hg}^\sigma$ and $b_{\kg}\in \Lambda^1_M\otimes {\kg}^\sigma$. 

Put $B\coloneqq B'-b_{\hg}=B_0+b_{\kg}$,  $\beta\coloneqq -b_{\hg}\in \Lambda^1_M\otimes {\hg}^\sigma$. By Remark \ref{subbundle-kg}  the decomposition (\ref{decAd}) is parallel with respect to  $B$.
 Similarly,  the decomposition (\ref{decLambdaAd}) will be $(\nabla^{B}_{\MM}\otimes\nabla^B_\add)$-parallel, so $\Lambda^1_M \otimes {\kg}^\sigma$ is  a $(\nabla^{B}_{\MM}\otimes\nabla^B_\add)$-parallel sub-bundle of $\Lambda^1_M \otimes\ad(Q)$. 
 Since $b_{\kg}$ is a section of $\Lambda^1_M \otimes {\kg}^\sigma$ we obtain 
$$(\nabla^{B}_{\MM}\otimes\nabla^B_\add)_\xi b_{\kg} \in \Gamma(\Lambda^1_M\otimes\Lambda^1_M\otimes {\kg}^\sigma)\ \ \forall \xi\in T_M\,.
$$
On the other hand, using Proposition \ref{new-prop}, we obtain 
$$ (\nabla^{B}_{\MM}\otimes\nabla^B_\add)_\xi b_{\kg} \in \Gamma(\Lambda^1_M\otimes\Lambda^1_M \otimes {\hg}^\sigma)\ \ \forall \xi\in T_M\,.
$$
 Therefore $(\nabla^{B}_{\MM}\otimes\nabla^B_\add)_\xi b_{\kg}=0$, which proves (\ref{B-B0-new}) because $b_{\kg}=B-B_0$.

\end{proof}

Combining Proposition \ref{ConnectionB-Pr} with Remark \ref{parallel-comp-conn}, we obtain

\begin{thry}\label{ConnectionB-Th}
Suppose that $\sigma$ is  infinitesimally 	homogeneous  with respect to  $B_0$. Fix $q_0\in Q$, and suppose that the pair $(G,H^\sigma_{q_0})$ is reductive. There exists a connection $B\in {\cal A}(Q)$ such that:
\begin{enumerate}
\item $((\nabla^{B}_\MM)^{\otimes k}\otimes \nabla^{B}_\EE) (\sigma^{(k)}_{B_0})=0\ \hbox{ for } \ 0\leq k\leq k^\sigma+1$.
\item 	$(\nabla^{B}_{\MM}\otimes\nabla^B_\add)(B-B_0)=0$.
\end{enumerate}
	
\end{thry}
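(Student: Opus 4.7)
The plan is very short because the two ingredients of the theorem have essentially been isolated in the preceding results. The strategy is to construct the required $B$ as a connection on the $H^\sigma_{q_0}$-reduction $P^\sigma_{q_0}\subset Q$, which is defined (using $\Phi$ from \eqref{PHI-mq} and Lemma \ref{RD-Lemma}) as the preimage of $\Phi(q_0)$. The two conditions of the theorem will then correspond to two distinct features of the chosen $B$: condition (1) will hold \emph{automatically}, by virtue of $B$ being compatible with $P^\sigma_{q_0}$, whereas condition (2) is the content of the nontrivial existence statement.

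For condition (1) I would invoke Remark \ref{parallel-comp-conn}, which states that the sections $\sigma^{(k)}_{B_0}$ are parallel with respect to any connection on $Q$ compatible with $P^\sigma_{q_0}$. Recall the reason: the equivariant maps $\varphi_k:Q\to L^k(\R^n,V)$ associated with $\sigma^{(k)}_{B_0}$ are constant on each $G$-orbit in $\Phi(Q)$, and $\Phi(Q)=G\Phi(q_0)$ by infinitesimal homogeneity, so each $\varphi_k$ is constant on the fiber $\Phi^{-1}(\Phi(q_0))=P^\sigma_{q_0}$. Constancy on the reduction translates precisely into parallelism along $P^\sigma_{q_0}$.

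For condition (2) I would apply Proposition \ref{ConnectionB-Pr}, which is where the reductivity hypothesis on $(G,H^\sigma_{q_0})$ is used. The argument there starts from an arbitrary connection $B'\in\mathcal{A}(P^\sigma_{q_0})$ (whose existence is standard), splits $B'-B_0\in A^1(M,\ad(Q))$ according to the parallel decomposition $\ad(Q)=\hg^\sigma\oplus\kg^\sigma$ induced by an $\mathrm{Ad}_{H^\sigma_{q_0}}$-invariant complement, and subtracts the $\hg^\sigma$-component to produce the sought $B$. The resulting difference $B-B_0$ lies in $\Lambda^1_M\otimes\kg^\sigma$, while Proposition \ref{new-prop}(2) forces $(\nabla^B_\MM\otimes\nabla^B_\add)(B-B_0)$ to lie in $\Lambda^1_M\otimes\Lambda^1_M\otimes\hg^\sigma$; the intersection being zero, the covariant derivative vanishes.

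Combining these two observations yields a $B\in\mathcal{A}(P^\sigma_{q_0})\subset\mathcal{A}(Q)$ (via the affine embedding of Remark \ref{RemRed}) satisfying both (1) and (2), which is exactly the claim. The main genuine difficulty has already been confronted in Proposition \ref{ConnectionB-Pr}: showing that the $\hg^\sigma$-correction, which is what breaks $B'$ into the canonical connection $B$, interacts correctly with the covariant derivative of the $\kg^\sigma$-component. This step crucially exploits both the parallelism of the splitting $\ad(Q)=\hg^\sigma\oplus\kg^\sigma$ (which uses $\mathrm{Ad}_{H^\sigma_{q_0}}$-invariance of $\kg$) and the constraint from Proposition \ref{new-prop}(2), itself derived via the Leibniz formula of Lemma \ref{LeibnizLemma} applied to the identity $\sigma^{(k+1)}_{B_0}+\beta\cdot\sigma^{(k)}_{B_0}=0$.
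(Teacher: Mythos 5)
Your proposal matches the paper's proof exactly: the paper states the theorem follows by combining Proposition \ref{ConnectionB-Pr} (which produces $B\in\mathcal{A}(P^\sigma_{q_0})$ with $(\nabla^{B}_{\MM}\otimes\nabla^B_\add)(B-B_0)=0$, using reductivity) with Remark \ref{parallel-comp-conn} (which gives condition (1) for any connection compatible with $P^\sigma_{q_0}$). Your account of the internal mechanism of Proposition \ref{ConnectionB-Pr} is also faithful to the paper's argument.
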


A connection $B\in {\cal A}(Q)$ satisfying the conclusion of Theorem \ref{ConnectionB-Th} will be called an adapted, or Ambrose-Singer type connection for the infinitesimally homogeneous section $\sigma$.  In particular if, $k=0$ then $\sigma^{(0)}_{B_0}=\sigma$, and we obtain the following result:
\begin{co}\label{ConnectionBCo}
	Suppose that $\sigma$ is  infinitesimally 	homogeneous  with respect to  $B_0$. Fix $q_0\in Q$, and suppose that the pair $(G,H^\sigma_{q_0})$ is reductive. Then there exists a   connection $B\in {\cal A}(Q)$ with the properties:
	$$\nabla^{B}_\EE \sigma=0\ , \   (\nabla^{B}_{\MM}\otimes\nabla^B_\add)(B-B_0)=0\,.
	$$  
\end{co}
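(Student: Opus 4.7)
The plan is essentially to observe that the corollary is nothing more than the $k=0$ specialization of Theorem \ref{ConnectionB-Th}. First I would invoke Theorem \ref{ConnectionB-Th} under the stated hypotheses: $\sigma$ is infinitesimally homogeneous with respect to $B_0$, $q_0 \in Q$ is fixed, and $(G, H^\sigma_{q_0})$ is reductive. The theorem produces a connection $B \in \mathcal{A}(Q)$ satisfying the two parallelism conditions for all $k$ in the range $0 \le k \le k^\sigma + 1$.

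Next I would specialize the first conclusion of the theorem to $k = 0$. By the recursive definition
\[
\sigma^{(0)}_{B_0} = \sigma,\quad \sigma^{(i)}_{B_0} = \bigl((\nabla^{B_0}_\MM)^{\otimes(i-1)} \otimes \nabla^{B_0}_\EE\bigr)\sigma^{(i-1)}_{B_0},
\]
the $k=0$ instance of condition (1) of the theorem reads $\nabla^B_\EE \sigma = 0$, which is precisely the first required equation. The second required equation $(\nabla^B_\MM \otimes \nabla^B_\add)(B - B_0) = 0$ is literally condition (2) of the theorem, so nothing further needs to be verified.

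There is no real obstacle in this argument, since the corollary is a direct weakening of Theorem \ref{ConnectionB-Th}; the only thing worth flagging explicitly is the identification $\sigma^{(0)}_{B_0} = \sigma$ from the recursive definition, which turns the $k=0$ parallelism statement into the clean form $\nabla^B_\EE \sigma = 0$. Correspondingly, the proof in the paper should be essentially a one-line reduction.
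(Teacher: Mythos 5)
Your reduction is exactly what the paper does: the corollary is stated immediately after the remark that taking $k=0$ in Theorem \ref{ConnectionB-Th} gives $\sigma^{(0)}_{B_0}=\sigma$, so condition (1) becomes $\nabla^B_\EE\sigma=0$ while condition (2) is carried over unchanged. Your proof is correct and matches the paper's one-line argument.
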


It is important to note that if $G$ is compact, then the reductivity condition in  Theorem \ref{ConnectionB-Th}, Corollary \ref{ConnectionBCo} will be automatically satisfied.

\section{Applications}\label{ApplicationS}

In this section we will see that our results   provide a  general framework to study locally homogeneous objects by investigating the corresponding infinitesimally homogeneous analogues. For each class of locally homogeneous objects one chooses a system of data $(M,Q,\rho, f:Q\to L(M), B_0,\sigma)$ of the type considered in the previous section, and considers the corresponding infinitesimally homogeneity condition, which a priori is weaker than local homogeneity. In the presence of an infinitesimally homogeneous object our general Theorem \ref{ConnectionB-Th} will yield an adapted connection $B$, which can be used to prove that any infinitesimally homogeneous object is locally homogeneous, and is even globally homogeneous if certain topological and completeness conditions are satisfied.

\subsection{LH Riemannian manifolds, Ambrose-Singer Theorem}

In this section we explain briefly how the Ambrose-Singer theorem (Theorem \ref{ASTheorem} ) can be obtained using our Theorem \ref{ConnectionB-Th}. As explained above we need first to chose an appropriate system of data $(M,Q,\rho, f:Q\to L(M), B_0,\sigma)$.

Let $Q\coloneqq \O(M)$ be the orthonormal frame bundle of $(M,g)$ and  $C_0$ be be the Levi-Civita   connection on it. Let $f:Q\to L(M)$ be the inclusion bundle map and $\sigma\coloneqq R^g $ be the Riemann curvature tensor of $g$, regarded as a section of the vector bundle $E\coloneqq (\Lambda^1_M)^{\otimes4}\simeq Q\times_\rho(\R^{n*})^{\otimes 4}$. It is easy to see that   Singer's  infinitesimally homogeneous condition for $g$ \cite{Si} is  equivalent to our infinitesimally homogeneous condition for the section $R^g\in\Gamma(E)$ with respect to $C_0$ (see Definition \ref{InfHomSectDef}). 

Suppose that the Riemannian curvature tensor $R^g$ is infinitesimally homogeneous with respect to $C_0$. By Corollary \ref{ConnectionBCo} there exists a connection $C\in \cal {A}(\O(M))$ 
such that:
$$\nabla^{C}_\EE R^g=0\ , \   (\nabla^{C}_{\MM}\otimes\nabla^C_\add)(C-C_0)=0\,.
	$$  
Let $\nabla$ be the metric connection on $M$ induced by $C$; then $\nabla R^g=0$, $\nabla (C-C_0)=0$ and Corollary \ref{C_C_0CO} below shows that $\nabla$ is indeed an Ambrose-Singer connection.

%The infinitesimal homogeneity  is  a priori  weaker than the local homogeneity condition.   However this condition implies the existence of an Ambrose-Singer connection, and using such a connection one proves that, in fact, the two homogeneity conditions for Riemannian metrics are equivalent. This is a fundamental result of the theory. We refer to \cite{KO}, \cite{NT}, \cite{TV} for a detailed treatment of the subject.
%
\begin{pr}\label{C_C_0Pr} Let  $\nabla_0$ ( resp. $\nabla$) be the linear connections on smooth manifold $M$ associated with $C_0\in \cal {A}(L(M))$ (resp.  $C\in \cal {A}(L(M))$). Let 
$$S\coloneqq \nabla-\nabla_0=C-C_0\in A^1(\End(T_M))=A^1(\ad(L(M)).$$
 Then the following conditions are equivalent:
\begin{enumerate}
\item $\nabla R^{\nabla_0}=\nabla T^{\nabla_0}=\nabla S=0\,,$
%\item $\nabla R^{\nabla_0}=\nabla T^{\nabla}=\nabla S=0\,,$
\item $\nabla R^{\nabla}=\nabla T^{\nabla}=\nabla S=0.$
\end{enumerate}
\end{pr}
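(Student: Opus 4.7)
The plan is to derive universal algebraic formulas that express $T^\nabla$, $R^\nabla$ in terms of $T^{\nabla_0}$, $R^{\nabla_0}$, and $S$, arranged so that every covariant derivative of $S$ appearing in them is taken with respect to the common operator $\nabla$; once this is done, both implications reduce to the Leibniz rule for $\nabla$ acting as a derivation on tensors.

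The first step is the classical variation of torsion and curvature under a perturbation $\nabla=\nabla_0+S$. Expanding $\nabla_X=\nabla_{0,X}+S(X)$ in the definitions and regrouping yields
\[
T^\nabla(X,Y)-T^{\nabla_0}(X,Y)=S(X)Y-S(Y)X,
\]
\[
R^\nabla(X,Y)Z-R^{\nabla_0}(X,Y)Z=(\nabla_X S)(Y)Z-(\nabla_Y S)(X)Z+S\bigl(T^\nabla(X,Y)\bigr)Z-[S(X),S(Y)]Z.
\]
The main technical point is to obtain the second identity in the symmetric form above, with $\nabla S$ and $T^\nabla$ on the right-hand side rather than $\nabla_0 S$ and $T^{\nabla_0}$. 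The cleanest route is to derive first the analogous formula with $\nabla_0 S$ and $T^{\nabla_0}$ by expanding $\nabla=\nabla_0+S$, and then to obtain the symmetric form by applying the very same computation to the reversed relation $\nabla_0=\nabla-S$.

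With these two identities in place, the implication $(1)\Rightarrow(2)$ follows from two applications of the Leibniz rule: the torsion identity combined with $\nabla S=0$ and $\nabla T^{\nabla_0}=0$ gives $\nabla T^\nabla=0$; the curvature identity, together with $\nabla S=0$, $\nabla T^\nabla=0$ and $\nabla R^{\nabla_0}=0$, then gives $\nabla R^\nabla=0$. The reverse implication $(2)\Rightarrow(1)$ is proved by reading the same two identities in the opposite direction, with an identical Leibniz argument. After the symmetric curvature variation formula has been established, no auxiliary identity relating $\nabla S$ to $\nabla_0 S$ is needed, and the rest of the proof is purely mechanical.
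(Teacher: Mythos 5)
Your proposal is correct and takes essentially the same route as the paper: both identify the torsion difference as a linear, connection-parallel function of $S$ (so $\nabla S=0$ kills it in both directions), and both rewrite the curvature difference so that only $\nabla$-covariant derivatives of $S$ and $T^\nabla$ appear before applying the Leibniz rule. The only cosmetic difference is that the paper packages the curvature variation as $R^{\nabla_0}=R^\nabla-d^\nabla S+\tfrac12[S\wedge S]$ and then argues $\nabla[S\wedge S]=0$ and $(d^\nabla S)(X,Y)=S(T^\nabla(X,Y))$ under $\nabla S=0$, whereas you expand that same identity term by term.
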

\begin{proof}
	The difference $T^\nabla- T^{\nabla_0}$ 
	 is   the image of $C-C_0$ under the bundle  morohism  $\Lambda^1\End(T_M)\to L^2_{\mathrm{alt}}(T_M,T_M)$
	given by  $S\mapsto T_S$, where
	$$T_S(X,Y)\coloneqq S(X)(Y)-S(Y)(X)\,.
	$$
	This  morphism is induced by an $\GL(n)$-equivariant isomorphism 
	$$\R^{n*}\otimes\gl(n)\to L^2_\alt(\R^n,\R^n),$$
	so it is parallel with respect to any linear connection on $M$. Therefore   $\nabla S =0$ implies $\nabla T_S=0$, so, under the assumption $\nabla S=0$, the conditions $\nabla T^{\nabla_0}=0$, $\nabla T^{\nabla}=0$ are equivalent.
	\vspace{2mm}\\
	
	Since $\nabla_0=\nabla-S$, we have $R^{\nabla_0}=R^{\nabla}-d^\nabla S+\frac{1}{2}[S\wedge S]$. A direct computation shows that the assumption $\nabla S=0$ implies
	\begin{enumerate}
		\item $\nabla[S\wedge S]=0$.
		\item $(d^\nabla S)(X,Y)=S(T^\nabla(X,Y))$	 for any vector fields $X$, $Y\in \mathfrak {X}(M)$.
	\end{enumerate}
	
	Therefore, under the assumptions $\nabla S=0$, $\nabla T^\nabla=0$, the conditions $\nabla R^{\nabla_0}=0$ and $\nabla R^{\nabla}=0$ are equivalent. %
\end{proof}

Suppose now that $C_0$ is the Levi-Civita connection of a Riemannian manifold $(M,g)$, and let $C\in {\cal A}(\O(M))$. The morphism $ \Lambda^1\so(T_M)\to L^2_{\mathrm{alt}}(T_M,T_M)$ given by $S\mapsto T_S$ is a bundle isomorphism, which is   $\O(n)$-equivariant, hence parallel with respect to any metric connection on $M$. Therefore, in this case the conditions $\nabla T^\nabla=0$ and $\nabla (C-C_0)=0$ are equivalent, so Proposition \ref{C_C_0Pr}  gives

\begin{co}\label{C_C_0CO} Let  $C_0\in \cal {A}(\O(M))$ be the Levi-Civita connection of $(M,g)$, and $R^g $ be the Riemann curvature tensor. Let $\nabla$ be a linear metric connection on $M$ associated with $C\in \cal {A}(\O(M))$. The following  conditions are equivalent.
\begin{enumerate}
\item $\nabla R^g=\nabla (C-C_0)=0\,,$
%\item $\nabla R^g=\nabla T^{\nabla}=0\,,$
\item $\nabla R^{\nabla}=\nabla T^{\nabla}=0.$
\end{enumerate}
\end{co}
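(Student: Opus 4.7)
My plan is to deduce the corollary from Proposition \ref{C_C_0Pr} essentially by bookkeeping, using two special features of the Levi-Civita situation: $T^{\nabla_0}=0$, and the fact that restricting the universal torsion map to $\mathfrak{so}$ produces a bundle \emph{isomorphism} (not just a morphism).

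First I would note that, since $C_0$ is the Levi-Civita connection of $g$, the associated linear connection is $\nabla_0=\nabla^g$, so $R^{\nabla_0}=R^g$ and $T^{\nabla_0}=0$. In particular the difference of torsions reduces to the torsion of $\nabla$ itself:
\begin{equation*}
T^\nabla = T^\nabla-T^{\nabla_0}=T_S, \qquad S\coloneqq \nabla-\nabla_0=C-C_0.
\end{equation*}
Because both $C$ and $C_0$ lie in $\mathcal{A}(\mathrm{O}(M))$, the section $S$ actually takes values in $\Lambda^1_M\otimes \mathfrak{so}(T_M)$, so the paragraph preceding the corollary applies: the $\mathrm{O}(n)$-equivariant linear isomorphism $\mathbb{R}^{n*}\otimes\mathfrak{so}(n)\to L^2_\alt(\mathbb{R}^n,\mathbb{R}^n)$, $S\mapsto T_S$, induces a bundle isomorphism $\Lambda^1_M\otimes\mathfrak{so}(T_M)\to L^2_\alt(T_M,T_M)$ which is parallel with respect to any metric connection. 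Since $\nabla$ is metric, this gives the equivalence $\nabla S=0\ \Longleftrightarrow\ \nabla T_S=0\ \Longleftrightarrow\ \nabla T^\nabla=0$.

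With this equivalence in hand, the two conditions in Proposition \ref{C_C_0Pr} translate directly into the two conditions of the corollary. Indeed, condition (1) of the proposition, $\nabla R^{\nabla_0}=\nabla T^{\nabla_0}=\nabla S=0$, becomes $\nabla R^g=0$ (since $\nabla T^{\nabla_0}=0$ is automatic and $\nabla S=\nabla(C-C_0)$), which is exactly condition (1) of the corollary. Similarly, condition (2) of the proposition, $\nabla R^\nabla=\nabla T^\nabla=\nabla S=0$, reduces to $\nabla R^\nabla=\nabla T^\nabla=0$ in view of the equivalence $\nabla S=0\Leftrightarrow \nabla T^\nabla=0$ in the metric setting; this is condition (2) of the corollary. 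Applying Proposition \ref{C_C_0Pr} then closes the argument.

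The only potential pitfall is making sure that the metricity of $\nabla$ is genuinely being used at the right spot, namely to upgrade the (generically only injective) map $S\mapsto T_S$ on $\Lambda^1\otimes \mathfrak{so}$ to an isomorphism, which is what allows us to recover $\nabla S=0$ from $\nabla T^\nabla=0$; without this the implication (2)$\Rightarrow$(1) would fail. Everything else is formal manipulation and an invocation of the previously established proposition.
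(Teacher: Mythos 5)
Your proof is correct and follows the same route as the paper: you exploit $T^{\nabla_0}=0$ for the Levi-Civita connection, note that $S\mapsto T_S$ restricted to $\Lambda^1_M\otimes\so(T_M)$ is a bundle \emph{isomorphism} that is parallel for any metric connection (which gives $\nabla S=0\Leftrightarrow\nabla T^\nabla=0$), and then specialize Proposition \ref{C_C_0Pr}. This is exactly the argument in the paragraph preceding the corollary in the paper.
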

%\begin{proof}
%$(1)\Leftrightarrow (2)$: The torsion tensor $T^{\nabla} \in A^0(L^2_\alt(T_M,T_M))$ is the image of $C-C_0$ under the bundle isomorphism  $\Lambda^1\so(T_M)\to L^2_{\mathrm{alt}}(T_M,T_M)$
%given by  $S\mapsto T_S$, where
%%
%$$T_S(X,Y)\coloneqq S(X)(Y)-S(Y)(X)\,.
%$$
%This isomorphism is induced by an $\O(n)$-equivariant isomorphism 
%%
%$$\R^{n*}\otimes\so(n)\to L^2_\alt(\R^n,\R^n),$$
%%
%so it is parallel with respect to any metric connection on $M$. Therefore,
%the condition $\nabla(C-C_0)=0$ is equivalent to $\nabla T^{\nabla}=0$. 
%
%$(2)\Leftrightarrow (3)$: A direct calculation shows that the equations  $\nabla T^{\nabla}=\nabla R^g=0$ is equivalent to $\nabla T^{\nabla}=\nabla R^{\nabla}=0$ \cite[p. 14]{TV}, where $R^{\nabla}\in A^2(\gl(T_M))$ is the curvature tensor of linear connection $\nabla$. 
%%
%\end{proof}

Note that the existence of an Ambrose-Singer connection has a fundamental consequence, namely Singer's theorem which states that any infinitesimally homogeneous (in particular any locally homogeneous) complete and simply connected Riemannian manifold is globally homogeneous \cite{Si}. This result holds in the differentiable framework (does not need the real analyticity). More precisely one can prove the next theorem by applying the \cite[Corollary 7.9]{KN} to an Ambrose-Singer connection. 
\begin{thry}
A Riemannian manifold $(M,g)$ is infinitesimally homogeneous if and only if it is locally homogeneous. Any connected, simply connected, complete infinitesimally homogeneous Riemannian manifold is homogeneous.
\end{thry}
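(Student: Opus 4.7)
The ``only if'' direction is essentially tautological. If $\varphi : U_1 \to U_2$ is a local isometry with $\varphi(x_1) = x_2$, then $d_{x_1}\varphi$ induces an $\O(n)$-equivariant isomorphism $\O(M)_{x_1} \to \O(M)_{x_2}$, and since a local isometry intertwines the Levi-Civita connection, this isomorphism pulls back every iterated covariant derivative $(R^g)^{(i)}_{C_0}$ from $x_2$ to $x_1$. This is exactly condition (\ref{LocHomCond1}) for $\sigma = R^g$ with respect to $B_0 = C_0$, so $(M,g)$ is infinitesimally homogeneous in the sense of Definition \ref{InfHomSectDef}.

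For the converse my plan is to apply the formalism of Section \ref{InfHomogSect} to the system of data $(M, \O(M), r, f, C_0, R^g)$, where $r : \O(n) \hookrightarrow \GL(n)$ is the standard embedding, $f : \O(M) \hookrightarrow L(M)$ the induced inclusion, $C_0$ the Levi-Civita connection on $\O(M)$, and $\sigma = R^g$ is viewed as a section of $E = (\Lambda^1_M)^{\otimes 4}$. As noted at the beginning of this subsection, Singer's infinitesimal homogeneity hypothesis for $(M,g)$ is precisely the infinitesimal homogeneity of $R^g$ with respect to $C_0$ in the sense of Definition \ref{InfHomSectDef}. Since $\O(n)$ is compact, the reductivity hypothesis of Theorem \ref{ConnectionB-Th} holds for free by Remark \ref{CompRed}, so Corollary \ref{ConnectionBCo} produces a connection $C \in \cal{A}(\O(M))$ satisfying $\nabla^C_\EE R^g = 0$ and $(\nabla^C_\MM \otimes \nabla^C_\add)(C - C_0) = 0$. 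The induced metric linear connection $\nabla$ on $M$ then satisfies $\nabla R^g = 0$ and $\nabla(C - C_0) = 0$, so Corollary \ref{C_C_0CO} identifies it as an Ambrose-Singer connection: $\nabla R^\nabla = \nabla T^\nabla = 0$.

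To finish I would invoke the Kobayashi-Nomizu extension theorem \cite[Ch.~VI, Cor.~7.9]{KN}. For any two points $x_1, x_2 \in M$, parallel transport of $\nabla$ along any path joining them yields a linear isomorphism $F : T_{x_1}M \to T_{x_2}M$ which automatically intertwines $T^\nabla$ and $R^\nabla$ (because these tensors are $\nabla$-parallel) and preserves the metric (because $\nabla$ is metric). The Kobayashi-Nomizu theorem then extends such an $F$ to a local affine transformation $s : U_1 \to U_2$ with $s(x_1) = x_2$, and to a global affine transformation when $\nabla$ is complete and $M$ is connected and simply connected; since $\nabla$ is metric and $F$ is already a linear isometry, $s$ is automatically a (local or global) isometry, yielding the desired homogeneity conclusions. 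The main obstacle I anticipate is the completeness hypothesis in the global statement: one must verify that geodesic completeness of $(M,g)$ transfers to completeness of the Ambrose-Singer connection $\nabla$ in the sense required by [KN], which is standard but not entirely trivial since the geodesics of $\nabla$ generally differ from those of $\nabla^g$ when the torsion is nonzero. This can be handled by observing that, because $T^\nabla$ and $R^\nabla$ are $\nabla$-parallel, the construction of $s$ from $F$ along a path proceeds by a finite iteration that does not require the $\nabla$-exponential to be globally defined at a single point; simple connectivity of $M$ then guarantees that the locally defined affine extensions patch into a global one.
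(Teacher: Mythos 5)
Your proposal follows the same approach as the paper's: choose the data $(M,\O(M),\rho,f,C_0,R^g)$, observe that Singer's infinitesimal homogeneity coincides with infinitesimal homogeneity of $R^g$ with respect to $C_0$, apply Corollary~\ref{ConnectionBCo} to produce $C$ with $\nabla^C_\EE R^g=0$ and $(\nabla^C_\MM\otimes\nabla^C_\add)(C-C_0)=0$, pass to the induced metric connection $\nabla$, use Corollary~\ref{C_C_0CO} to conclude $\nabla R^\nabla=\nabla T^\nabla=0$, and then invoke \cite[Ch.~VI, Cor.~7.9]{KN} for the local-to-global extension. This is exactly the route taken in Section~\ref{ApplicationS}.

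Two small points. First, a terminological slip: the direction you label ``only if'' (local isometries propagate the covariant derivatives of $R^g$, hence locally homogeneous $\Rightarrow$ infinitesimally homogeneous) is actually the ``if'' direction of the biconditional as written; the substance of the argument is fine. Second, the completeness issue you raise at the end is real, but your proposed workaround by ``finite iteration'' that avoids global definedness of the $\nabla$-exponential is vague and is not how the gap should be closed. The clean, standard resolution --- and the one the paper uses elsewhere, see Proposition~\ref{completeNabla} citing \cite[Proposition~1.5]{TV} --- is that any metric connection on a complete Riemannian manifold is automatically geodesically complete. Since $\nabla$ preserves $g$ and $(M,g)$ is complete, $\nabla$ is complete, so \cite[Cor.~7.9]{KN} applies directly and yields global affine transformations, which are isometries because they are $\nabla$-affine with isometric differential at one point and $\nabla g=0$. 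You should replace the heuristic patching argument with this citation.
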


 The Ambrose-Singer theorem has been extended to the general framework of locally homogeneous pseudo-Riemannian manifolds. Since the structure group of the frame bundle is not necessary compact,   one needs an additional reductivity condition \cite{GO,Lu,CL}. Similar generalizations   can also be obtained using  Theorem \ref{ConnectionB-Th}.

  \subsection{\texorpdfstring{Kiri\v{c}enko's}{Ki} theorem for LH systems \texorpdfstring{$(g,P_1,\dots,P_k)$}{STr1}. }

 Let $M$ be a smooth manifold of dimension $n$. 
 \begin{dt} A system $(g,P_1,\dots,P_k)$ consisting of a Riemannian metric $g$ and tensor fields $P_1$, \dots, $P_k$ on $M$ is called locally homogeneous if for any two points $x_1$, $x_2\in M$ there is a an isometry $\varphi:U_1\to U_2$ between open neighborhoods $U_i\ni x_i$ such that $\varphi(x_1)=x_2$ and $\varphi_*(\resto{P_j}{U_1})=\resto{P_j}{U_2}$.
 \end{dt}	
 
 A locally homogeneous system $(g,P_1,\dots,P_k)$ defines a pseudogroup of local isometries of $(M,g)$ which acts transitively on $M$. So, in the terminology used  by Kiri\v{c}enko \cite{Ki}, one obtains a ``geometric structure" on $M$ which is associated with the metric $g$ and the system $(P_1,\dots,P_k)$.

Each tensor field $P_j$ is a section in  the vector bundle ${\cal T}^{r_j}_{s_j}(M)\coloneqq T_M^{\otimes r_j}\otimes (\Lambda^1_M)^{\otimes s_j}$.   The following result due to Kiri\v{c}enko \cite{Ki}, extends the Ambrose-Singer theorem to manifolds with a geometric structure defined by a locally homogeneous system $(g,P_1,\dots,P_k)$.
\begin{thry}\label{KiriTh}
A system  $(g,P_1,\dots,P_k)$  consisting of a Riemannian metric $g$ and tensor fields $P_1$, \dots, $P_k$ on $M$ is	 locally homogeneous  if and only if there exists a metric connection $\nabla$ such that
\begin{equation}\label{KiriEq}\nabla R^{\nabla}=\nabla T^{\nabla}=\nabla P_1=\cdots=\nabla P_k=0.
\end{equation}
If $M$ is simply connected, then any  locally homogeneous system  $(g,P_1,\dots,P_k)$ with $(M,g)$ complete is globally homogeneous in the following sense: the exists a  transitive group of isometries of $(M,g)$ leaving the tensor fields $P_j$ invariant. 
\end{thry}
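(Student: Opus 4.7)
The plan is to reduce Kiri\v{c}enko's theorem to Theorem \ref{ConnectionB-Th} by packaging $R^g$ and the tensor fields $P_1,\dots,P_k$ into a single section $\sigma$ of an associated vector bundle over the orthonormal frame bundle. Take $Q\coloneqq \O(M)$ with structure group $G=\O(n)$; note that $\O(n)$ is compact, so the reductivity hypothesis in Theorem \ref{ConnectionB-Th} is automatic. Let $f:Q\hookrightarrow L(M)$ be the inclusion and $B_0\in {\cal A}(Q)$ the Levi-Civita connection of $g$. Set
$$V\coloneqq (\R^{n*})^{\otimes 4}\oplus \bigoplus_{j=1}^k (\R^n)^{\otimes r_j}\otimes (\R^{n*})^{\otimes s_j}$$
with its canonical $\O(n)$-representation $\rho$, so that $E\coloneqq Q\times_\rho V\cong (\Lambda^1_M)^{\otimes 4}\oplus \bigoplus_{j=1}^k {\cal T}^{r_j}_{s_j}(M)$, and let $\sigma\coloneqq (R^g,P_1,\dots,P_k)\in\Gamma(E)$.

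For the forward direction, I first verify that local homogeneity of $(g,P_1,\dots,P_k)$ implies infinitesimal homogeneity of $\sigma$ with respect to $B_0$. Given $x_1,x_2\in M$, a local isometry $\varphi$ with $\varphi(x_1)=x_2$ and $\varphi_*P_j=P_j$ induces an $\O(n)$-equivariant isomorphism $\theta:Q_{x_1}\to Q_{x_2}$ via $d_{x_1}\varphi$; since $\varphi$ commutes with Levi-Civita covariant differentiation and preserves each component of $\sigma$, it preserves all iterated Levi-Civita derivatives $\sigma^{(i)}_{B_0}$, so (\ref{LocHomCond1}) holds. Corollary \ref{ConnectionBCo} then yields $B\in{\cal A}(Q)$ with $\nabla^B_\EE\sigma=0$ and $(\nabla^B_\MM\otimes\nabla^B_\add)(B-B_0)=0$; unwinding components, the first identity gives $\nabla R^g=0$ and $\nabla P_j=0$ for every $j$, where $\nabla$ is the metric connection on $M$ associated with $B$. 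Corollary \ref{C_C_0CO} applied to the pair $(B_0,B)$ then translates $\nabla R^g=0$ and $\nabla(B-B_0)=0$ into $\nabla R^\nabla=\nabla T^\nabla=0$, establishing (\ref{KiriEq}).

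For the converse and the global statement, apply the standard Kobayashi--Nomizu result \cite[Corollary 7.9]{KN} to $(M,\nabla)$: the conditions $\nabla R^\nabla=\nabla T^\nabla=0$ imply that any two points of $M$ admit neighborhoods related by a local affine diffeomorphism of $(M,\nabla)$, and such a diffeomorphism is automatically a local isometry (because $\nabla$ is metric) and preserves each $P_j$ (because $\nabla P_j=0$), so $(g,P_1,\dots,P_k)$ is locally homogeneous. In the complete, simply connected case, the same corollary produces a transitive Lie group of affine transformations of $(M,\nabla)$, each of which preserves $g$ and all $P_j$, giving the desired transitive group of isometries preserving the whole system. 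The main obstacle is the bookkeeping in the middle paragraph: one has to package the data correctly so that Corollary \ref{ConnectionBCo} applies, and then translate its two conclusions via Corollary \ref{C_C_0CO} into the Kiri\v{c}enko form; once this is done, the rest follows from standard affine-geometric machinery.
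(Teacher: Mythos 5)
Your forward direction is essentially identical to the paper's: same choice of $Q=\O(M)$, $G=\O(n)$ (compact, so reductivity is free), same $\sigma=(R^g,P_1,\dots,P_k)$, same appeal to Corollary \ref{ConnectionBCo} and then Corollary \ref{C_C_0CO} to convert $\nabla R^g=0$, $\nabla(C-C_0)=0$ into $\nabla R^\nabla=\nabla T^\nabla=0$. That half is fine.

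The converse direction, however, contains a genuine error. You assert that a local $\nabla$-affine diffeomorphism of $(M,\nabla)$ is ``automatically a local isometry (because $\nabla$ is metric)'' and ``preserves each $P_j$ (because $\nabla P_j=0$).'' This is false: $\nabla$ being metric does not force every $\nabla$-affine map to preserve $g$. For instance, on $\R^n$ with the flat Levi-Civita connection, $x\mapsto 2x$ is $\nabla$-affine but not an isometry, even though $\nabla g=0$. What is missing is the construction that the paper (following \cite[Corollary 7.5]{KN} and its proof) actually carries out: first build a linear isomorphism $F:T_{x_1}M\to T_{x_2}M$ by $\nabla$-parallel transport along a path from $x_1$ to $x_2$. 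Because $g$, $T^\nabla$, $R^\nabla$, $P_1,\dots,P_k$ are all $\nabla$-parallel, $F$ is a linear isometry carrying $T^\nabla_{x_1},R^\nabla_{x_1},P_{j,x_1}$ to the corresponding objects at $x_2$. One then invokes \cite[Theorem 7.4]{KN} to produce a local $\nabla$-affine diffeomorphism $\varphi$ with $\varphi_{*,x_1}=F$; since $\varphi$ is affine, $\varphi^*g$ is again $\nabla$-parallel, and since it agrees with $g$ at $x_1$ (as $F$ is an isometry), $\varphi^*g=g$ on the whole neighborhood. The same argument gives $\varphi^*P_j=P_j$. It is this choice of tangent map, not the mere metricity of $\nabla$, that makes $\varphi$ an isometry preserving the $P_j$. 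In the complete, simply connected case the same construction via \cite[Corollary 7.9]{KN} produces global affine transformations, whose isometry and $P_j$-preservation properties again come from the parallel-transport choice of $F$, not automatically.
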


In the presence of locally homogeneous system $(g,P_1,\dots,P_k)$, a metric connection satisfying (\ref{KiriEq}) is called an Ambrose-Singer-Kiri\v{c}enko connection  for the system $(g,P_1,\dots,P_k)$ (or for the geometric structure defined by this system) \cite{Lu}.
\vspace{2mm}

Let $Q\coloneqq \O(M)$ be the orthonormal frame bundle of $(M,g)$,  $C_0$ be its Levi-Civita   connection, and $f:Q\to L(M)$ be the obvious inclusion bundle map. The system  $(R^g,P_1,\cdots,P_k)$ defines a section  $\sigma$  of the vector bundle
$$E\coloneqq (\Lambda^1_M)^{\otimes4}\oplus {\cal T}^{r_1}_{s_1}(M) \oplus {\cal T}^{r_2}_{s_2}(M)\oplus \cdots \oplus {\cal T}^{r_k}_{s_k}(M)\,.$$
Since the system $(g,P_1,\dots,P_k)$ is locally homogeneous, the section $\sigma\in \Gamma(E)$ is infinitesimally homogeneous with respect to $C_0$ (see Definition \ref{InfHomSectDef}). Using Corollary \ref{ConnectionBCo} there exists a connection $C\in \cal {A}(\O(M))$ 
such that:
$$   \nabla^{C}_\EE \sigma=0\ \hbox{,}\   (\nabla^{C}_{\MM}\otimes\nabla^C_\add)(C-C_0)=0\,.$$
Let $\nabla$ be the metric connection on $M$ induced by $C$. Then, one obtains 
$$\nabla R^g=0 , \nabla (C-C_0)=0,\ \nabla P_1=\nabla P_2=\cdots=\nabla P_k=0.$$
By Corollary \ref{C_C_0CO} we see that $\nabla$ is indeed an Ambrose-Singer-Kiri\v{c}enko connection. The converse implication is proved by applying the \cite[Corollary 7.5]{KN} (and its proof method) to a connection satisfying (\ref{KiriEq}). More precisely, for two points $x_1$, $x_2\in M$ we choose a smooth path $\gamma:[0,1]\to M$ with $\gamma(0)=x_1$, $\gamma(1)=x_2$. The parallel transport with respect to $\nabla$ defines an isometric isomorphism $F:T_{x_1}M\to  T_{x_2}M$ mapping $R^\nabla_{x_1}$, $T^\nabla_{x_1}$, $P_{jx_1}$ to $R^\nabla_{x_2}$, $T^\nabla_{x_2}$, $P_{jx_2}$  respectively. As in \cite[Corollary 7.5]{KN} $F$ gives a local $\nabla$-affine isomorphism $\varphi$, which will be isometric and leave the tensor fields $P_j$ invariant. 

In the  complete, simply connected case the $\nabla$-affine isomorphisms $\varphi$ obtained in this way extend to global isometries of $(M,g)$.
\\

Note that a generalization of Theorem \ref{KiriTh} to pseudo-Riemannian manifolds can be found in  \cite{GO}, \cite{Lu}. 
\begin{ex} The case of locally homogeneous almost Hermitian manifolds is considered in \cite{Se}. An almost Hermitian manifold $(M,g,J)$ is locally homogeneous if, for every two points $x_1$, $x_2\in M$ there is a Hermitian isometry $\varphi:U_1\to U_2$ between open neighborhoods $U_i\ni x_i$ sending $x_1$ to $x_2$. By a Hermitian isometry we mean a diffeomorphism which is compatible with both the almost complex structure and the Hermitian metric.

A characterization theorem similar to Theorem \ref{ASTheorem}  was proved by Sekigawa \cite{Se} for almost Hermitian manifolds. The following local version can be obtained using Kiri\v{c}enko's theorem by choosing $k=1$ and $P_1=J$.

\begin{thry} \label{AS-Local-verionHer}\cite{Se}, \cite[Theorem 1]{CN}  An almost Hermitian  manifold $(M,g,J)$ is locally homogeneous if and only if there exists a metric connection $\nabla$ such that
$$\nabla R^\nabla=\nabla T^\nabla=\nabla J=0.
$$
If $M$  is simply connected, then any complete locally  homogeneous almost Hermitian structure on $M$ is (globally) homogeneous.
\end{thry}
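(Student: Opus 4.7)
The plan is to deduce this theorem as a direct specialization of Kiri\v{c}enko's theorem (Theorem \ref{KiriTh}) to the case $k=1$, $P_1 = J$. The entire argument is essentially a translation between the notion of Hermitian isometry and the notion of isometry preserving a tensor field.

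First I would observe that the almost complex structure $J$ is a section of the vector bundle ${\cal T}^1_1(M)=T_M\otimes \Lambda^1_M$, so it is a tensor field in the sense of Kiri\v{c}enko's theorem. A diffeomorphism $\varphi:U_1\to U_2$ between open subsets of $M$ is a Hermitian isometry in the sense of Sekigawa (compatible with both $g$ and $J$) if and only if $\varphi$ is an isometry of $(U_1,g|_{U_1})\to (U_2,g|_{U_2})$ satisfying $\varphi_*(J|_{U_1})=J|_{U_2}$. Therefore the local homogeneity of the almost Hermitian manifold $(M,g,J)$ is \emph{exactly} the local homogeneity of the system $(g,J)$ as defined before Theorem \ref{KiriTh}.

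With this identification, the two implications of the theorem follow immediately from the two implications of Theorem \ref{KiriTh}. For the ``only if'' direction, local homogeneity of $(g,J)$ gives a metric connection $\nabla$ satisfying $\nabla R^\nabla=\nabla T^\nabla=\nabla J=0$. For the ``if'' direction, the existence of such a $\nabla$ implies by Kiri\v{c}enko's theorem the local homogeneity of the system $(g,J)$, which is precisely the desired local Hermitian homogeneity.

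For the completeness/simple-connectedness statement, I would again invoke the final assertion of Theorem \ref{KiriTh}: if $M$ is simply connected and $(M,g)$ is complete, then the local isometries produced by parallel transport of frames with respect to $\nabla$ extend to global isometries of $(M,g)$, and they leave the tensor field $J$ invariant because $\nabla J=0$. Hence the transitive group of isometries leaving $J$ invariant obtained in this way acts transitively by \emph{Hermitian} isometries, yielding global homogeneity of $(M,g,J)$ as an almost Hermitian manifold. There is no real obstacle in this argument; the only point to be verified is the equivalence between ``Hermitian isometry'' and ``isometry preserving the $(1,1)$-tensor $J$,'' which is tautological.
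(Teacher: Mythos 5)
Your proposal is correct and follows exactly the paper's route: the paper explicitly derives Theorem \ref{AS-Local-verionHer} as the special case of Kiri\v{c}enko's Theorem \ref{KiriTh} with $k=1$ and $P_1=J$, identifying Hermitian isometries with isometries preserving the $(1,1)$-tensor $J$.
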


  \end{ex} 
  
\subsection{Opozda's theorem on locally homogeneous \texorpdfstring{$G$}{Str2}-structures}

 In \cite{O2, O3} affine connections and their affine transformations has been investigated by Opozda and in \cite{O1} the notion of infinitesimal homogeneity is extended to arbitrary connections on $G$-structures. In this section we show that Opozda's theorem can be proved using Theorem \ref{ConnectionB-Th}. Our Lemma \ref{CoAnnConn2} will allow us to show that the analyticity condition required in Opozda's statement is not necessary.

 Let $M$ be an $n$-dimensional manifold, and $G$ be a Lie subgroup of the linear group $\GL(n, \R)$. We recall that a     $G$-structure on   $M$, is a  sub-bundle   $P\subset L(M)$ where $P$ is a principal $G$-bundle, and the inclusion map is $G$-equivariant. 
 \begin{dt}
 Let  $P\subset L(M)$ be a 	$G$-structure on   $M$. A connection $C_0$ on $P$ is called
 locally homogeneous, if for any two points $x_1$, $x_2\in M$ there exists a  diffeomorphism $x_1\in  U_1\textmap{f} U_2\ni x_2$ between open neighborhoods of $x_i$, such that $df(P_{U_1})\subset P_{U_2}$, and $f$ is a $\nabla_0$-affine isomorphism, where  $\nabla_0$ is the linear connection associated with $C_0$. 
 \end{dt}

Opozda also introduces a natural infinitesimal homogeneity condition for connections on the principal bundle of a $G$-structure (see \cite[Definitions 1.3,   1.5]{O1}). Using our formalism (see Definition \ref {InfHomSectDef}), this condition is equivalent to
\begin{dt}
 Let  $P\subset L(M)$ be a 	$G$-structure on   $M$. A connection $C_0$ on $P$ is called
 infinitesimally homogeneous, if	 the pair $(T^{\nabla_0},R^{\nabla_0})$, regarded as a section in the associated vector bundle 
 $$E\coloneqq P\times_G(L^2_\alt(\R^n,\R^n)\oplus L^2_\alt(\R^n,\End(\R^n)),$$
   is  infinitesimally homogeneous with respect to $C_0$.
\end{dt}

Suppose that $C_0$ is  infinitesimally homogeneous, i.e. the section $\sigma_0=(T^{\nabla_0},R^{\nabla_0})$ is infinitesimally homogeneous with respect to $C_0$. By Theorem \ref{ConnectionB-Th}  we obtain  a connection $C\in \cal {A}(P)$ 
	such that for $0\leq k\leq k^{\sigma_0}+1$:
	\begin{equation} \label{ConnCOpozda}
	   ((\nabla^{C}_\MM)^{\otimes k}\otimes \nabla^{C}_\EE) (\sigma^{(k)}_{C_0})=0\ \hbox{,}\    (\nabla^{C}_{\MM}\otimes\nabla^C_\add)(C-C_0)=0\,.
	 \end{equation}
	Let $\nabla$ denote the linear connection on $M$ induced by $C$. Taking $k=0$, one obtains 
	$$\nabla T^{\nabla_0}=0 \ , \nabla R^{\nabla_0}=0 \ , \nabla (C-C_0)=0.$$
	 Proposition \ref{C_C_0Pr} with $S\coloneqq \nabla-\nabla_0$ gives
	$$\nabla R^{\nabla}=\nabla T^{\nabla}=\nabla S=0.
	$$

  Lemma \ref {CoAnnConn2} proved below shows that the atlas consisting of  $\nabla$-normal coordinate systems is analytic, and with respect to the corresponding analytic structure, the connection $\nabla_0=\nabla -S$ is analytic. The equation (\ref {ConnCOpozda}) implies that for $0\leq k\leq k^{\sigma_0}+1$:
$$\nabla (\nabla_0^{(k)}T^{\nabla_0})=0\ , \nabla (\nabla_0^{(k)}R^{\nabla_0})=0.
$$
Using \cite[Lemma 2.1]{O1} the above equations are verified for all $k\geq0$. 
 
Fot two points $x_1$, $x_2\in M$ we choose a smooth path $\gamma:[0,1]\to M$ with $\gamma(0)=x_1$, $\gamma(1)=x_2$. The parallel transport with respect to $\nabla$ defines a linear isomorphism $F:T_{x_1}M\to  T_{x_2}M$ mapping $ (\nabla_0^{(k)}T^{\nabla_0})_{x_1}$, $(\nabla_0^{(k)}R^{\nabla_0})_{x_1}$ to $ (\nabla_0^{(k)}T^{\nabla_0})_{x_2}$, $(\nabla_0^{(k)}R^{\nabla_0})_{x_2}$  respectively.  Using \cite[Theorem 7.2]{KN} $F$ defines a local $\nabla_0$-affine isomorphism $\varphi$ sending $x_1$ to $x_2$.

Therefore, we obtain  Opozda's theorem (\cite[Theorem 2.2 ]{O1}):
\begin{thry}\label{Opzda}
Let $P\subset L(M)$ be a $G$-structure on a manifold $M$ and $C_0$ be an infinitesimally $P$-homogeneous connection on $P$. Fix $q_0\in P$, and suppose that the pair $(G,H^{\sigma_0}_{q_0})$ is reductive. Then $\nabla_0$ is locally homogeneous.
\end{thry}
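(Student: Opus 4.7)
My plan is to apply Theorem \ref{ConnectionB-Th} to the section $\sigma_0 = (T^{\nabla_0}, R^{\nabla_0})$, which is infinitesimally homogeneous with respect to $C_0$ by hypothesis, and then translate the resulting parallelism information back into statements about $\nabla_0$ so that the standard Kobayashi-Nomizu machinery produces local $\nabla_0$-affine isomorphisms between any two points of $M$.

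First I would invoke Theorem \ref{ConnectionB-Th} with $Q = P$, base point $q_0$, and section $\sigma_0$; the reductivity of $(G, H^{\sigma_0}_{q_0})$ is exactly the condition required. This yields a connection $C \in \mathcal{A}(P)$ with
\begin{equation*}
\big((\nabla^{C}_\MM)^{\otimes k} \otimes \nabla^{C}_\EE\big)\big(\sigma^{(k)}_{C_0}\big) = 0, \quad 0 \leq k \leq k^{\sigma_0} + 1,
\end{equation*}
and $(\nabla^{C}_\MM \otimes \nabla^{C}_\add)(C - C_0) = 0$. Letting $\nabla$ denote the linear connection on $M$ induced by $C$, the $k = 0$ instance gives $\nabla T^{\nabla_0} = \nabla R^{\nabla_0} = 0$, while the second identity reads $\nabla S = 0$ for the difference tensor $S := \nabla - \nabla_0$. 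Proposition \ref{C_C_0Pr} then converts this into the Ambrose-Singer type triple $\nabla R^\nabla = \nabla T^\nabla = \nabla S = 0$.

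The next step is to upgrade the finite-order parallelism of the derivatives of $T^{\nabla_0}$ and $R^{\nabla_0}$ to parallelism of all orders, which is what the Kobayashi-Nomizu rigidity results require. Lemma \ref{CoAnnConn2} is the key tool: since $\nabla$ satisfies $\nabla R^\nabla = \nabla T^\nabla = 0$, its atlas of normal coordinates is real analytic, and in this atlas the tensor field $S$ (hence $\nabla_0 = \nabla - S$) becomes analytic. Theorem \ref{ConnectionB-Th} already provides $\nabla$-parallelism of $\nabla_0^{(k)} T^{\nabla_0}$ and $\nabla_0^{(k)} R^{\nabla_0}$ for $0 \leq k \leq k^{\sigma_0} + 1$, and \cite[Lemma 2.1]{O1} propagates this to every $k \geq 0$ thanks to the analyticity just established.

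To conclude, given arbitrary $x_1, x_2 \in M$ I would pick a smooth path joining them and use $\nabla$-parallel transport to construct a linear isomorphism $F: T_{x_1} M \to T_{x_2} M$ that matches $\nabla_0^{(k)} T^{\nabla_0}$ and $\nabla_0^{(k)} R^{\nabla_0}$ at the endpoints for every $k \geq 0$; then \cite[Theorem 7.2]{KN} extends $F$ to a local $\nabla_0$-affine isomorphism $\varphi$ with $\varphi(x_1) = x_2$, i.e.\ local homogeneity. The main obstacle I anticipate is the passage from the finite-order parallelism delivered by Theorem \ref{ConnectionB-Th} to the infinite-order parallelism demanded by \cite[Theorem 7.2]{KN}; Lemma \ref{CoAnnConn2} is precisely what enables this without imposing an a priori analyticity assumption on $M$ or $\nabla_0$, thereby bypassing the analyticity hypothesis that appears in Opozda's original formulation.
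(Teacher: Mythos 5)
Your proposal follows essentially the same route as the paper: invoke Theorem \ref{ConnectionB-Th} with $Q=P$ and $\sigma_0=(T^{\nabla_0},R^{\nabla_0})$, take the $k=0$ instance together with $\nabla S=0$, feed these into Proposition \ref{C_C_0Pr}, use Lemma \ref{CoAnnConn2} to obtain an analytic atlas in which $\nabla_0$ is analytic, propagate the finite-order parallelism to all orders via \cite[Lemma 2.1]{O1}, and finish with $\nabla$-parallel transport and \cite[Theorem 7.2]{KN}. This matches the paper's argument step for step, including the observation that Lemma \ref{CoAnnConn2} is what dispenses with Opozda's a priori analyticity hypothesis.
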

 
\begin{lm}
Let $M$ be an analytic $n$-manifold, $G$ be a Lie group, $p:P\to M$ be an analytic principal $G$-bundle 	on $M$, and $A\in {\cal A}(P)$ be an analytic connection on $P$. Let $\rho:G\to \GL(F)$ be a representation of $G$ on an $r$-dimensional vector space $F$, $E\coloneqq P\times_\rho F$ be the associated vector bundle, and $\nabla^\rho_A$ be the linear connection on $E$ associated with $A$. Then
\begin{enumerate}
\item Let $\sigma\in A^0(M,E)$ such that $\nabla^\rho_A\sigma$ is an analytic $E$-valued 1-form. Then $\sigma$ is analytic.	
\item Let $\nabla$ be an analytic connection on $M$, and $\alpha\in A^1(M,\ad(P))$ such that the derivative  $(\nabla\otimes\nabla^A_\ad)(\alpha)$ vanishes in $A^0(\Lambda_M^1\otimes \Lambda_M^1\otimes\ad(P))$. Then $\alpha$ is analytic, so $B\coloneqq A+\alpha$ is an analytic connection on $P$.
\end{enumerate}
\end{lm}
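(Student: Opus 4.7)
The strategy is to deduce both parts from the Morrey--Nirenberg theorem on real-analyticity of solutions of elliptic linear equations with real-analytic coefficients. First I would observe that part (2) follows immediately from part (1): the bundle $E'\coloneqq\Lambda^1_M\otimes\ad(P)$ is analytic and carries the tensor product connection $\nabla'\coloneqq\nabla\otimes\nabla^A_\ad$, which is analytic because both $\nabla$ and $A$ are. The hypothesis $(\nabla\otimes\nabla^A_\ad)(\alpha)=0$ reads $\nabla'\alpha=0$, trivially analytic, so applying part (1) to the triple $(E',\nabla',\alpha)$ delivers the analyticity of $\alpha$, whence $B=A+\alpha$ is an analytic connection on $P$.

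For part (1) I would argue locally. Fix an analytic coordinate chart $U\subset M$ over which $P$ (and therefore $E$) admits an analytic trivialization; in this trivialization $\nabla^\rho_A=d+\tilde A$, where $\tilde A$ is an analytic $\End(F)$-valued $1$-form on $U$ (analytic because $A$ is analytic and any Lie group morphism $\rho:G\to\GL(F)$ is automatically analytic). Endow $U$ with the Euclidean metric pulled back via the chart and $F$ with a constant Hermitian inner product. The formal adjoint $(\nabla^\rho_A)^*$ is then a first-order linear differential operator with analytic coefficients, and the Bochner-type Laplacian
\begin{equation*}
L\coloneqq(\nabla^\rho_A)^*\,\nabla^\rho_A
\end{equation*}
is a second-order elliptic operator on $E|_U$ with analytic coefficients, its principal symbol at $\xi\in T^*_xU$ being $|\xi|^2\,\id_{E_x}$. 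Applying $(\nabla^\rho_A)^*$ to the given identity $\nabla^\rho_A\sigma=\eta$ yields $L\sigma=(\nabla^\rho_A)^*\eta$, which is analytic by hypothesis on $\eta$. The Morrey--Nirenberg regularity theorem then guarantees that $\sigma$ is analytic on $U$, and since $U$ was arbitrary, $\sigma$ is analytic on $M$.

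The single substantive input is the Morrey--Nirenberg theorem; the rest is routine local computation, the only checks being the ellipticity of $L$ (standard: the symbol of any rough Laplacian is $|\xi|^2$) and the analyticity of its coefficients (clear in an analytic chart with an analytic trivialization). A more elementary alternative for part (1) avoids elliptic theory altogether: in coordinates the equation $\nabla^\rho_A\sigma=\eta$ reads $\partial_i\sigma=\eta_i-\tilde A_i\sigma$, a completely integrable first-order linear system with analytic coefficients and analytic inhomogeneity whose solution is analytic by the classical analytic-dependence theorem for linear ODEs applied successively along each coordinate direction from a base point.
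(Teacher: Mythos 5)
Your proof is correct and follows essentially the same route as the paper: reduce part~(2) to part~(1) by realizing $\nabla\otimes\nabla^A_\ad$ as an analytic connection on $\Lambda^1_M\otimes\ad(P)$ (the paper does this explicitly via the product bundle $L(M)\times_M P$ and the representation $(u,l)\mapsto(u^t)^{-1}\otimes\ad_l$), and for part~(1) trivialize locally so that $\nabla^\rho_A=d+\tilde A$ with $\tilde A$ analytic, pass to an auxiliary second-order elliptic operator with analytic coefficients (you use $(\nabla^\rho_A)^*\nabla^\rho_A$, the paper uses $d^*(d+\tilde A)$; both have symbol $|\xi|^2\,\mathrm{id}$), and invoke analytic elliptic regularity. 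The first-order ODE alternative you sketch at the end also works, though "completely integrable" should be read as ``compatible along the given solution $\sigma$'' rather than as a Frobenius condition (which would force flat curvature), and one needs analytic dependence of ODE solutions on parameters, not merely on time, to get joint analyticity.
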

\begin{proof} (1) The problem is local, so it sufficient to prove that $\alpha$ is analytic around any point $x\in M$.  With respect to 
\begin{itemize}
\item 	an analytic chart $h:U\to V\subset\R^n$, 
\item    an analytic trivialization $\tau:P_U\to U\times G$ of $P$,
\item a basis $b$ of $F$,
\end{itemize}
the restriction of the operator  $\nabla^\rho_A$ to  a sufficiently small open neighborhood $U$ of $x$  can be identified  with a differential  operator of the form
$$d+ a_{h,\tau,b}: A^0(V,\R^r)\to A^1(V,\R^r)\,,
$$
where $a_{h,\tau,b}\in A^1(V,\gl(r))$ is analytic. Let $\sigma_{h,\tau,b}\in A^0(V,\R^r)$ be the $\R^r$-valued map associated with $\sigma$. The hypothesis implies that $(d+ a_{h,\tau,b})\sigma_{h,\tau,B}$ is analytic, hence
$$d^*(d+ a_{h,\tau,b})\sigma_{h,\tau,b}
$$
is also analytic. But $d^*(d+ a_{h,\tau,b})$ is an elliptic operator with analytic coefficients, so, by analytic elliptic regularity \cite[Theorem 40, p. 467]{Be}, it follows that $\sigma_{h,\tau,b}$ is analytic. Therefore $\resto{\sigma}{U}$ is analytic.
\\ \\
(2) Let $C\in {\cal A}(L(M))$ be the connection on the frame bundle $L(M)$ which corresponds to $\nabla$. The statement follows by applying  (1) to the connection defined by the pair $(C,A)$ on the product bundle $L(M)\times_M P$ and the representation $\GL(\R)\times G\to \GL(\R^n\otimes \g)$ given by $(u,l)\mapsto (u^t)^{-1}\otimes \ad_l$. 
	
\end{proof}

\begin{lm}\label{CoAnnConn1}
Let $M$ be an analytic manifold, and $\nabla$ be an analytic connection on $M$. Let $S\in A^1(\End(T_M))$ be such that $\nabla S$ is analytic. Then $S$ is analytic, so the connection $\nabla+ S$ is also analytic. 	
\end{lm}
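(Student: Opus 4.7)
The plan is to deduce Lemma \ref{CoAnnConn1} as a direct consequence of part (1) of the preceding lemma applied to the frame bundle. Since $M$ is analytic, $L(M)\to M$ is an analytic principal $\GL(n,\R)$-bundle, and the analytic linear connection $\nabla$ corresponds to an analytic principal connection $C$ on $L(M)$. Under the canonical identification $\End(T_M)=\ad(L(M))$, the tensor $S\in A^1(\End(T_M))$ becomes a section of the associated vector bundle
$$
\Lambda^1_M\otimes \End(T_M)=L(M)\times_\rho \bigl((\R^n)^*\otimes \gl(n)\bigr),
$$
where $\rho:\GL(n,\R)\to \GL((\R^n)^*\otimes \gl(n))$ is given by $g\mapsto (g^t)^{-1}\otimes \ad_g$.

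Next, I would observe that the linear connection $\nabla^\rho_C$ on this associated bundle, coming from the principal connection $C$ via the representation $\rho$, coincides tensor-factor by tensor-factor with the connection obtained by extending $\nabla$ (on $T_M$, hence on $\Lambda^1_M$ and $\End(T_M)$) as a derivation. Hence $\nabla^\rho_C S=\nabla S$, which is analytic by hypothesis. Part (1) of the preceding lemma (applied with $P=L(M)$, $A=C$, $F=(\R^n)^*\otimes\gl(n)$) then immediately yields that $S$ is analytic.

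Finally, since $C$ is analytic and $S\in A^1(\ad(L(M)))$ is analytic, the principal connection $C+S$ on $L(M)$ is analytic, and therefore so is the corresponding linear connection $\nabla+S$ on $M$.

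The only nontrivial point—and the one that would merit explicit verification—is the identification $\nabla^\rho_C S=\nabla S$, i.e.\ matching the associated connection on $L(M)\times_\rho((\R^n)^*\otimes\gl(n))$ with the tensor-extension of $\nabla$ on $\Lambda^1_M\otimes\End(T_M)$. This is a standard but notationally finicky compatibility between the principal-bundle and tensor-bundle viewpoints; once it is in place, the conclusion is immediate from part (1) of the preceding lemma and no further analytic input (no further appeal to elliptic regularity) is needed.
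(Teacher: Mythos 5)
Your argument is correct, and it supplies precisely the reasoning the paper leaves implicit: Lemma \ref{CoAnnConn1} has no written proof in the text, because it is meant to follow at once from part (1) of the immediately preceding lemma applied to $P=L(M)$, $A=C$, $\rho(g)=(g^t)^{-1}\otimes\ad_g$ on $F=(\R^n)^*\otimes\gl(n)$ — exactly your setup. The compatibility $\nabla^\rho_C S=\nabla S$ you flag is the same routine identification the authors use throughout Section 2 (e.g.\ in the definition of $\nabla^B_{ijpq}$ as a tensor product of associated connections), so there is nothing further to check; your proof is essentially the intended one, phrased slightly more cleanly than the product-bundle route taken in part (2) of the preceding lemma, which specializes to the identical argument when $P=L(M)$.
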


\begin{lm}\label{CoAnnConn2}
Let $M$ be a differentiable manifold,  let $\nabla$ be a linear connection on $M$ such that $\nabla T^\nabla=0$, 	$\nabla R^\nabla=0$, and let $S\in A^1(\End(T_M))$ be such that $\nabla S=0$. The atlas consisting of  $\nabla$-normal coordinate systems is analytic, and with respect to the corresponding analytic structure, the connections $\nabla$ and $\nabla +S$ are analytic.
\end{lm}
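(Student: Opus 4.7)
The plan is to first establish that $M$ carries a real-analytic structure, defined by the $\nabla$-normal charts, with respect to which $\nabla$ is an analytic connection, and then to deduce the statement for $\nabla+S$ from the previous Lemma \ref{CoAnnConn1}. The heart of the argument is the classical fact that a linear connection satisfying $\nabla T^\nabla=0$ and $\nabla R^\nabla=0$ is real-analytic in its own normal coordinate systems (cf.\ \cite{KN}, Vol.~I, Ch.~VI).

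To verify this, fix $p\in M$, a basis $(X_1,\ldots,X_n)$ of $T_pM$, and introduce the $\nabla$-normal chart $\phi(x)\coloneqq \exp_p^\nabla(\sum_i x^i X_i)$, defined on a neighborhood of $0\in\R^n$. I extend $(X_i)$ to a local frame $(E_i)$ by $\nabla$-parallel transport along radial $\nabla$-geodesics. Because $\nabla T^\nabla=0$ and $\nabla R^\nabla=0$, the components of $T^\nabla$ and $R^\nabla$ in the frame $(E_i)$ are constant on the domain of $\phi$, equal to their values at $p$. Writing $\phi_*(\partial/\partial x^i)=A^j_i(x)\, E_j$, each $\phi_*(\partial/\partial x^i)$ is a Jacobi field along the radial geodesic $t\mapsto\exp_p^\nabla(t\sum_k x^k X_k)$. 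Combined with $[\partial_i,\partial_j]=0$ and the definitions of $T^\nabla$, $R^\nabla$, the Jacobi-type equation for $t\mapsto A^j_i(tx)$ becomes a linear ODE system whose coefficients depend polynomially on $x$ and on the constants $T^\nabla_p$, $R^\nabla_p$. By the analytic-dependence theorem for linear ODEs with real-analytic coefficients, $A(x)$ is real-analytic, and therefore so are the Christoffel symbols of $\nabla$ in the chart $\phi$, since they can be recovered from $A$ and $A^{-1}$ and from the vanishing of the connection form in the parallel frame along radial directions.

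The transition function between two $\nabla$-normal charts at points $p, q\in M$ is, up to a linear change of frame at the source, the composition $(\exp_q^\nabla)^{-1}\circ \exp_p^\nabla$; by the previous step each exponential map is analytic, hence the transition is analytic on its domain of definition. Thus the family of $\nabla$-normal charts defines a real-analytic atlas on $M$ with respect to which $\nabla$ is an analytic linear connection.

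Finally, the hypothesis $\nabla S=0$ means that $\nabla S \in A^1(\Lambda^1_M\otimes\End(T_M))$ is identically zero, hence trivially analytic with respect to the atlas constructed above. Applying Lemma \ref{CoAnnConn1} to the analytic connection $\nabla$ on the analytic manifold $M$, one concludes that $S$ itself is real-analytic, and consequently $\nabla+S$ is an analytic linear connection on $M$. The main technical obstacle lies in the Jacobi-field step: one must carefully verify that parallelism of $T^\nabla$ and $R^\nabla$ really forces the ODE for $A(x)$ to have polynomial coefficients in the parallel frame. The underlying computation is standard but bookkeeping-heavy, and may either be carried out directly in $(E_i)$ or invoked as a classical fact from Kobayashi--Nomizu.
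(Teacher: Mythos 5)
Your argument is correct and follows the same two-step route as the paper: establish analyticity of the $\nabla$-normal atlas and of $\nabla$ (the paper cites \cite[Theorem 7.7]{KN} outright, whereas you sketch the underlying Jacobi-field/ODE argument), then feed the parallel tensor $S$ into Lemma \ref{CoAnnConn1} to get analyticity of $\nabla+S$. The only difference is that you unpack the classical Kobayashi--Nomizu theorem rather than invoking it, which adds length but no new ideas.
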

\begin{proof}
By \cite[Theorem 7.7]{KN} the atlas consisting of  $\nabla$-normal coordinate systems is analytic and with respect to the corresponding analytic structure the connection $\nabla$ is analytic. By Lemma \ref{CoAnnConn1} the connection $\nabla+S$ is analytic.	 
\end{proof}

\subsection{Locally homogeneous spinors}

We have seen that the case of LH  almost Hermitian structures can be obtained as a special case of Kiri\v{c}enko's formalism dedicated to LH tensors. We give now an example which cannot be obtained as a special case of this formalism. In this example we have to consider a section in a vector bundle which is not associated with the frame bundle of the base manifold. Let $r: \Spin(n)\to \SO(n)$ be the canonical epimorphism, and $(M,g)$ be an oriented Riemannian $n$-manifold. A Spin structure of $(M,g)$ is a bundle morphism $\Lambda:Q\to \SO(M)$ of type $r$,  where $Q$ is a $\Spin(n)$-bundle. The short exact sequence
$$
1\to \{\pm1\}\to \Spin(n)\to \SO(n)\to 1
$$
shows  $M$ is a Spin manfold (i.e. it admits a Spin structure) if and only if its Stiefel-Whitney class $w_2(M)$ vanishes, and, if non-empty, the set of isomorphism classes of Spin structures on $M$ is an $H^1(M,\Z_2)$-torsor. In particular a simply connected Spin manifold admits an (up to isomorphism) unique Spin structure.

Let     $\kappa:\Spin(n)\to \mathrm{\GL}(\Delta_n)$ be the spin representation and $S\coloneqq Q\times_{\kappa}{\Delta_n}$ be the associated spinor bundle, where $\Delta_n$ is the vector space of Dirac spinors defined by 
$$\Delta_n=\underset{k \ \hbox{\small{times}}}{\underbrace{\C^2\otimes\dots\otimes\C^2}} \ \ \ \ \hbox{for} \ \ n=2k,\ 2k+1.
$$

Recall that, if $n$ is even, then $S$ comes with a Levi-Civita parallel orthogonal decomposition $S=S^+\oplus S^-$(see \cite[Section 2.5]{Fr}). Since the Lie group morphism $r$ is a local isomorphism it follows that a Spin structure $\Lambda$ induces a bijection ${\cal A}(Q)\to{\cal A}(\SO(M))$ between the spaces of connections on the two bundles. Denoting by $\tilde C_0\in {\cal A}(Q)$ the lift of the Levi-Civita connection $C_0$ to  $Q$, we see that, if $(M,g)$ is locally homogeneous, then $(g,Q,\tilde C_0)$ is a locally homogeneous triple in the sense of Definition \ref{DefSymLH}. Indeed, if $\varphi:U_1\to U_2$ is an isometry between simply connected sets $U_1$, $U_2\subset M$, then, by Lemma \ref{Lifts} below, the bundle map $\tilde\varphi_*:\SO(U_1)\to\SO(U_2)$ induced by the tangent map of $\varphi$ lifts to a bundle map $\Psi:Q_{U_1}\to Q_{U_2}$. Since $\tilde\varphi_*$ leaves the Levi-Civita connection $C_0$ invariant, it follows that $\Psi$ leaves the  connection $\tilde C_0$ invariant.

\begin{dt}\label{LHspinor}
Let $(M,g)$ be a LH Riemannian manifold. A spinor $s\in \Gamma(M,S)$ will be  called LH if for any pair ($x_1,x_2)\in M\times M$ there exists an  isometry  $ U_1\textmap{\varphi} U_2$  between open neighborhoods $U_i\ni x_i$, and a $\tilde\varphi_*$-covering bundle isomorphism $ Q_{U_1}\textmap{\Psi}Q_{U_2}$ such that $\Psi_*(\resto{s}{U_1})=\resto{s}{U_2}$.
\end{dt}

We also have a natural infinitesimal  homogeneity condition: this is just the infinitesimal  homogeneity condition for $s$ regarded as a section of $S$ with respect to the lift  $\tilde{C_0}$ of the Levi-Civita connection $C_0$ to  $Q$. This condition is apparently weaker than the LH condition. 
\begin{re} Let $(M,g)$ be a spin LH Riemannian manifold, and 
 $s$ a LH spinor, then the section $(R^g,s)$ is infinitesimally homogeneous with respect to connection $\tilde{C_0}\in {\cal A}(Q)$.	
\end{re}

\begin{proof} Let $\varphi:U_1\to U_2$ , $\Psi:Q_{U_1}\to Q_{U_2}$ as in  Definition \ref{LHspinor}. Since  $\Psi$   lifts  $\tilde \varphi_*$,  the induced vector  bundle map $T_{U_1}\to T_{U_2}$ is precisely the tangent map $\varphi_*$. Taking also into account that $\Psi$  leaves the connection $\tilde C_0$ invariant (because $\tilde\varphi_*$ leaves $C_0$ invariant) and maps $s|_{U_1}$ onto $s|_{U_2}$, it follows that it maps $(R^g)_{\tilde C_0}^{(i)}|_{U_1}$ onto $(R^g)_{\tilde C_0}^{(i)}|_{U_2}$  and $s_{\tilde C_0}^{(i)}|_{U_1}$ onto $s_{\tilde C_0}^{(i)}|_{U_2}$ for any $i$.\end{proof}

A metric connection $\nabla$ on $M$ defines a connection on $\SO(M)$, so, via the identification ${\cal A}(\SO(M))={\cal A}(Q)$, it defines a Hermitian connection (which will be denoted by the same symbol $\nabla$) on  the spinor bundle $S$.
\begin{thry} \label{ASspinor}
	Let  $s\in \Gamma(M,S)$ be a locally homogeneous spinor. There exists an Ambrose\,-\,Singer connection $\nabla$ on $(M,g)$ which leaves $s$ invariant. 
\end{thry}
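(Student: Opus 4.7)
The plan is to apply Corollary \ref{ConnectionBCo} to the following system of data: take $G=\Spin(n)$ and $Q$ to be the principal bundle of the chosen spin structure; let $f:Q\to L(M)$ be the composition of $\Lambda:Q\to\SO(M)$ with the inclusion $\SO(M)\hookrightarrow L(M)$ (an $r$-morphism where here $r$ denotes the composition $\Spin(n)\to\SO(n)\hookrightarrow\GL(n)$); let the base connection be $B_0\coloneqq \tilde C_0$; and let $\sigma\coloneqq (R^g,s)\in\Gamma(E)$ with $E\coloneqq(\Lambda^1_M)^{\otimes 4}\oplus S$, the representation on the second summand being the spin representation $\kappa$. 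The remark immediately preceding the theorem shows that $\sigma$ is infinitesimally homogeneous with respect to $\tilde C_0$, and since $\Spin(n)$ is compact the reductivity hypothesis is automatic by Remark \ref{CompRed}. The corollary therefore produces a connection $B\in\mathcal{A}(Q)$ satisfying
\begin{equation*}
\nabla^{B}_{\EE}\,\sigma=0,\qquad (\nabla^{B}_{\MM}\otimes\nabla^{B}_{\add})(B-\tilde C_0)=0.
\end{equation*}

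Next I would push these identities down to the frame bundle. Since the canonical epimorphism $\Spin(n)\to\SO(n)$ is a local isomorphism, the text has already observed that $\Lambda$ induces a bijection $\mathcal{A}(Q)\simeq\mathcal{A}(\SO(M))$; similarly it induces a canonical isomorphism of adjoint bundles $\ad(Q)\simeq\ad(\SO(M))$. Let $C\in\mathcal{A}(\SO(M))$ correspond to $B$ and let $\nabla$ denote the associated metric connection on $T_M$. Under these identifications $B-\tilde C_0$ corresponds to $C-C_0$, and the covariant derivative on $S$ induced by $B$ via $\kappa$ is precisely the spinor lift of $\nabla$. Consequently
\begin{equation*}
\nabla R^g=0,\qquad \nabla s=0,\qquad \nabla(C-C_0)=0.
\end{equation*}

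Finally, Corollary \ref{C_C_0CO} applied to this $\nabla$ gives $\nabla R^\nabla=0$ and $\nabla T^\nabla=0$, so $\nabla$ is an Ambrose-Singer connection on $(M,g)$, and by construction it also satisfies $\nabla s=0$, which is the desired statement. The only point requiring genuine care — and essentially the only non-routine step — is the bookkeeping verification that the covariant derivative on $S$ coming from $B$ under the spin representation $\kappa$ really coincides with the spinor lift of $\nabla$, and that the identification $\ad(Q)\simeq\ad(\SO(M))$ is compatible with $B-\tilde C_0\leftrightarrow C-C_0$. Both assertions are immediate from the fact that $r_*:\spin(n)\to\so(n)$ is a Lie algebra isomorphism, but they should be checked explicitly because the whole argument rests on them.
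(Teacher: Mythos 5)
Your proposal is correct and follows the same route as the paper: choose the same data $(M,Q,\rho,f,\tilde C_0,\sigma)$ with $\sigma=(R^g,s)$ and $E=(\Lambda^1_M)^{\otimes 4}\oplus S$, invoke Corollary \ref{ConnectionBCo} (with the reductivity hypothesis automatic since $\Spin(n)$ is compact), push the parallelism identities down to $\SO(M)$ via the identification $\mathcal{A}(Q)\simeq\mathcal{A}(\SO(M))$, and finish with Corollary \ref{C_C_0CO}. Your final remark on checking that $\nabla^B_\EE$ on $S$ is the spinor lift of $\nabla$ and that $B-\tilde C_0\leftrightarrow C-C_0$ under $\ad(Q)\simeq\ad(\SO(M))$ is a reasonable point of care, and indeed follows immediately from $r_*$ being a Lie algebra isomorphism, exactly as you say.
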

\begin{proof}
 Put $G\coloneqq \Spin(n)$.  Consider the associated vector bundle  $E\coloneqq (\Lambda^1_M)^{\otimes4} \oplus S$, associated with the representation $\rho:G\to \GL((\R^{n*})^{\otimes 4}\oplus \Delta_n)$ induced by $(r,\kappa)$.
 
   Let  $R^g$ be the Riemann curvature tensor and $s\in \Gamma(M,S)$ be a spinor. The pair $(R^g,s)$ defines a section $\sigma$ of vector bundle $E$, which is obviously infinitesimally homogeneous with respect to $ \tilde C_0$. Corollary \ref{ConnectionBCo} gives a connection $\tilde C \in {\cal A}(Q)$	 such that 
$$    \nabla^{\tilde C}_\EE \sigma =0\ ,\ (\nabla^{\tilde C}_\MM\otimes \nabla^{\tilde C}_\ad)(\tilde C-\tilde C_0)=0\,.
$$
Using the identification ${\cal A}(\SO(M))={\cal A}(Q)$, we  denote by the same symbol $\nabla$   the connections on the vector bundles $T_M$, $S$ induced by $\tilde C$. The above equations are equivalent to 
$$\nabla R^g =0,\  \nabla  (\tilde C-\tilde C_0)=0,     \nabla s=0\,.$$
By Corollary \ref{C_C_0CO}, we see that $\nabla$ is an Ambrose-Singer connection which leaves the spinor $s$ invariant.

\end{proof}

\begin{co}\label{spin-coro}
Let $(M,g)$ be a  simply connected, complete LH Riemannian manifold, and $s\in \Gamma(M,S)$ is a locally homogeneous spinor. There exists a Lie group $G$ and an action $\beta:G\times Q\to Q$ by bundle isomorphisms with the following properties:
\begin{enumerate}
	
\item $\beta$ lifts  a transitive action by isometries   $\alpha:G\times M\to M$.
\item  $\beta$ leaves $s$ invariant. 
\end{enumerate}
\end{co}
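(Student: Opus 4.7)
The strategy is to combine Theorem \ref{ASspinor} with the standard completeness and simple connectivity argument from Ambrose--Singer theory, and then lift the resulting global isometries to $Q$ in a way that preserves $s$.

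First, Theorem \ref{ASspinor} provides an Ambrose--Singer connection $\nabla$ on $(M,g)$ together with a lift $\tilde C\in\mathcal{A}(Q)$ satisfying
$$\nabla R^g=\nabla T^\nabla=\nabla(\tilde C-\tilde C_0)=0,\quad \nabla^{\tilde C}_\EE s=0.$$
For any two points $x_1,x_2\in M$ and a smooth path $\gamma$ from $x_1$ to $x_2$, parallel transport along $\gamma$ with respect to $\nabla$ gives a linear isometry $F_\gamma:T_{x_1}M\to T_{x_2}M$ preserving $R^\nabla$, $T^\nabla$ and the tensor $\tilde C-\tilde C_0$. By completeness of $(M,g)$ and the standard Cartan--Ambrose--Hicks argument (compare \cite[Corollary 7.9]{KN} and the proof of Theorem \ref{KiriTh}), $F_\gamma$ extends to a global $\nabla$-affine isometry $\varphi_\gamma:M\to M$ with $\varphi_\gamma(x_1)=x_2$; simple connectivity of $M$ ensures this extension is unambiguous in the sense that it only depends on $F_\gamma$.

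Second, parallel transport along $\gamma$ with respect to $\tilde C$ gives a $\Spin(n)$-equivariant isomorphism $\Psi_\gamma:Q_{x_1}\to Q_{x_2}$, and since $\nabla^{\tilde C}_\EE s=0$ this isomorphism sends $s_{x_1}$ onto $s_{x_2}$. Because $M$ is simply connected, its Spin structure is unique and $\varphi_\gamma$ admits precisely two bundle lifts $Q\to Q$ covering the induced $\SO(n)$-isomorphism $(\varphi_\gamma)_*$ on $\SO(M)$; exactly one of these two lifts restricts to $\Psi_\gamma$ on $Q_{x_1}$. Call this canonical lift $\tilde\Psi_\gamma:Q\to Q$. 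It preserves $\tilde C$ (because $\varphi_\gamma$ preserves the underlying connection on $\SO(M)$) and preserves $s$ (because it matches $\tilde C$-parallel transport, which preserves $s$).

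Finally, define $G$ to be the group of all bundle automorphisms $\Psi:Q\to Q$ that cover an isometry of $(M,g)$, preserve $\tilde C$, and preserve $s$. This is a closed subgroup of the Lie group of bundle automorphisms of $Q$ covering isometries of $(M,g)$, hence a Lie group. By construction the action $\beta:G\times Q\to Q$ is transitive on fibres, and it covers a transitive action on $M$ through the first step. The main obstacle I anticipate is the verification that the canonical lifts $\tilde\Psi_\gamma$ assemble into a genuine group rather than just a family of bundle maps: one must show, using simple connectivity of $M$ together with $\nabla s=0$, that the path dependence cancels when composing two such lifts and that the $\mathbb{Z}_2$ ambiguity in choosing a Spin lift is resolved coherently. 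This is essentially a bundle version of the Cartan--Ambrose--Hicks functoriality, and it is the step that truly uses both completeness and simple connectivity.
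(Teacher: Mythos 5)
Your approach is essentially the same as the paper's: use Theorem \ref{ASspinor} to get an Ambrose--Singer connection $\nabla$ preserving $s$, get global $\nabla$-affine isometries from completeness and simple connectivity, lift them to $Q$ using the uniqueness of the Spin structure, and cut out the subgroup of lifts preserving $s$. Two of your steps need sharpening, and one worry you raise is not in fact an obstacle.

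First, the ``main obstacle'' you identify --- whether the lifts $\tilde\Psi_\gamma$ ``assemble into a genuine group'' --- is a non-issue under your own setup. You define $G$ as the set of \emph{all} bundle automorphisms of $Q$ covering isometries, preserving $\tilde C$, and preserving $s$; this is manifestly closed under composition and inversion, so it is a group by definition. The elements $\tilde\Psi_\gamma$ are only used as witnesses that $G$ acts transitively on $M$: for each pair $(x_1,x_2)$ you need to exhibit one element of $G$ mapping $P_{x_1}$ to $P_{x_2}$, nothing more. There is no coherence problem about the $\mathbb{Z}_2$ ambiguity to resolve.

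Second, the claim that $\tilde\Psi_\gamma$ preserves $s$ ``because it matches $\tilde C$-parallel transport'' is too loose: $\tilde\Psi_\gamma$ is a global bundle automorphism and only its restriction to the single fiber $Q_{x_1}$ is prescribed to equal the parallel transport map $\Psi_\gamma$. The correct argument (as in the paper) is a two-step one: (a) $\tilde\Psi_\gamma$ preserves $\tilde C$ because $\varphi_\gamma$ is $\nabla$-affine, so $\tilde\Psi_\gamma^*(\tilde C)$ and $\tilde C$ descend to the same connection on $\SO(M)$ and hence coincide; (b) therefore both $s$ and $\tilde\Psi_\gamma^*(s)$ are $\tilde C$-parallel sections that agree on the fiber $Q_{x_1}$ (or $Q_{x_2}$, depending on convention), hence they coincide everywhere. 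Without step (a) you cannot conclude $\tilde\Psi_\gamma^*(s)$ is parallel, and without step (b) you have agreement only at one fiber.

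Third, for the Lie group structure of $G$ you appeal to ``the Lie group of bundle automorphisms of $Q$ covering isometries of $(M,g)$'', but that ambient group is not obviously a Lie group. The paper handles this by first taking $\Gamma$, the group of isometric $\nabla$-affine transformations of $M$ (a Lie group by standard theory), and then invoking Lemma \ref{Lifts} to identify the lift group $\Gamma_Q$ with a trivial $\{\pm 1\}$-extension of $\Gamma$; $G$ is then a closed subgroup of the Lie group $\Gamma_Q$. Your $G$ is the same set (preserving $\tilde C$ forces the underlying isometry to be $\nabla$-affine), so you should simply replace your ambient group with $\Gamma_Q$ and cite Lemma \ref{Lifts} to justify its Lie group structure.

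Modulo these repairs the proof is correct and follows the paper's route.
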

\begin{proof}

Using Theorem \ref{ASspinor}, let $\nabla$ be an Ambrose-Singer connection on $(M,g)$ which leaves $s$ invariant. Let $\Gamma$ be the Lie group of isometric $\nabla$-affine transformations of $M$.  The proof of the classical  Ambrose-Singer theorem shows that $\Gamma$ acts transitively on $M$. By Lemma \ref{Lifts}  below, the group of lifts
$$
\Gamma_Q\coloneqq \{\Psi:Q\to Q\hbox{ bundle isomorphisms} |\ \exists\varphi\in\Gamma\hbox{ such that }\Psi \hbox{ lifts }\tilde\varphi_*\}
$$
is a trivial extension of $\Gamma$ by $\{\pm1\}$. The group 
$$
G\coloneqq \{\Psi\in\Gamma_Q|\ \Psi(s)=s\}
$$
is a closed subgroup of $\Gamma_Q$, so it has a natural Lie group structure. We will prove that $G$ acts transitively on $M$ via the natural morphism $G\to \Gamma$. Let $(u,v)\in M\times M$.  

The existence of an element $\varphi\in \Gamma$ such that $\varphi(u)=v$ is proved as follows  (see \cite[Corollary 7.5 p  262]{KN}):   choose a smooth path $\gamma:[0,1]\to M$  such that $\gamma(0)=u$, $\gamma(1)=v$. Parallel transport along $\gamma$ with respect to $\nabla$ gives a linear isometry $f:T_uM\to T_vM$ mapping $T^\nabla_u$ onto $T^\nabla_v$ and $R^\nabla_u$ onto $R^\nabla_v$. Since $g$ is complete and $\nabla$ is compatible with $g$, it follows  that $\nabla$ is complete (see \cite[Proposition 1.5]{TV}) so, by \cite[Corollary 7.9 p  265]{KN}  there exists  a unique $\nabla$-affine transformation $\varphi:M\to M$ whose tangent map at $u$ is $f$. Since $\varphi_{*u}=f$ is an isometry, and $\varphi$ is affine with respect to the metric connection $\nabla$, it follows that $\varphi$ is an isometry.

 Let $C$ be the connection on $\SO(M)$ associated with $\nabla$, and $\tilde C$ be its lift to the $\Spin(n)$-bundle $Q$.  Note that any lift $\Psi$  of $\tilde\varphi_*$  leaves the connection $\tilde C$ invariant. Indeed,   $\tilde\varphi_*$   leaves $C$ invariant (because $\varphi$ is $\nabla$-affine), so  $\tilde C$ and $\Psi_*(\tilde C)$   induce the same  connection $C$ on $\SO(M)$, so they coincide.

We claim that there exists a lift $\Psi$ of $\tilde\varphi_*$ which leaves the spinor $s$ invariant.   Parallel transport along the same curve $\gamma$ with respect to $\tilde C$ gives  a $\Spin(n)$-equivariant diffeomorphism $ Q_u\textmap{\theta} Q_v$  which lifts $\tilde\varphi_{*u}$ and (since $\nabla$ leaves $s$ invariant) maps $s_u$ onto $s_v$. Let $\Psi$ be the unique lift of $\tilde\varphi$ such that $\Psi_u=\theta$ (see the proof of Lemma \ref{Lifts}). The spinors $s$ and $\Psi(s)$ are both $\tilde C$-parallel and coincide at $v$, so they coincide. By construction we have  $\Psi\in G$. Since $\Psi$ lifts $\varphi$, it maps $u$ onto $v$, which completes the proof.
\end{proof}

\begin{lm}\label{Lifts}
Let $M_1$, $M_2$ be a connected differentiable manifolds, $P_i$ be an $\SO(n)$-bundle on $M_i$, and $\Lambda_i:Q_i\to P_i$ be a bundle morphism of type $r$. Let $\varphi:M_1\to M_2$ be a diffeomorphism, and $\Phi:P_1\to P_2$ be a  $\varphi$-covering bundle isomorphism. If   $H^1(M_1,\Z_2)=0$, then the set of $\Phi$-covering bundle isomorphims $Q_1\to Q_2$ has two elements.
\end{lm}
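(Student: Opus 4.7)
My plan is to reduce the lifting problem for $\Phi$ to the classical classification of Spin structures on a fixed $\SO(n)$-bundle. The key idea is that, through $\Phi$, we can transport the Spin structure $\Lambda_2:Q_2\to P_2$ to a Spin structure on $P_1$, and then compare it with $\Lambda_1:Q_1\to P_1$.

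More precisely, first I would form the pullback $\tilde Q_2\coloneqq P_1\times_{P_2}Q_2$ via $\Phi:P_1\to P_2$. The second projection $\tilde Q_2\to Q_2$ is $\Spin(n)$-equivariant and $\Phi$-covering, while the first projection $\tilde \Lambda_2:\tilde Q_2\to P_1$ is an $r$-morphism of principal bundles over $M_1$. In this way $\tilde\Lambda_2$ becomes a Spin structure on the $\SO(n)$-bundle $P_1$, and moreover every $\Phi$-covering bundle isomorphism $Q_1\to Q_2$ factors uniquely through $\tilde Q_2$ as the composition of a $\Spin(n)$-equivariant isomorphism $Q_1\to\tilde Q_2$ over $P_1$ with the second projection. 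Hence the set of $\Phi$-covering lifts of $\Phi$ is in bijection with the set of Spin-structure isomorphisms $Q_1\to\tilde Q_2$ over $P_1$.

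Next I would invoke the classical fact that the set of isomorphism classes of Spin structures on a fixed $\SO(n)$-bundle $P_1$ on $M_1$, whenever non-empty, is an $H^1(M_1,\Z_2)$-torsor; this is proved by comparing the $\Spin(n)$-valued transition cocycles lifting a given $\SO(n)$-cocycle of $P_1$. Since both $\Lambda_1$ and $\tilde\Lambda_2$ are Spin structures on $P_1$, and by hypothesis $H^1(M_1,\Z_2)=0$, the two are isomorphic as Spin structures. Consequently at least one $\Phi$-covering lift $Q_1\to Q_2$ exists.

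Finally, for the count, the set of such lifts is a torsor over the automorphism group of $Q_1$ as a Spin structure on $P_1$, namely the group of $\Spin(n)$-equivariant bundle maps $Q_1\to Q_1$ over $\id_{P_1}$. Such an automorphism is given fiberwise by left multiplication by an element of $\ker(r)=\{\pm 1\}$, i.e.\ by a continuous map $\epsilon:M_1\to\{\pm 1\}$. Since $M_1$ is connected, $\epsilon$ is constant, so this automorphism group has exactly two elements, and therefore the set of $\Phi$-covering bundle isomorphisms $Q_1\to Q_2$ has exactly two elements, differing by the global deck involution of $Q_1\to P_1$. The only delicate step is the $H^1(M_1,\Z_2)$-torsor statement, but this is standard and can be handled either by a cocycle argument or by citation.
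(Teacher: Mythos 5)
Your proof is correct, and it arrives at the right count by a route that is related to but not identical with the paper's. The paper works fiberwise: for each $x\in M_1$ it observes that the set $I_x$ of $\Spin(n)$-equivariant lifts $Q_{1,x}\to Q_{2,\varphi(x)}$ of $\Phi_x$ is a $\{\pm1\}$-torsor, assembles the union $I=\bigcup_x I_x$ into a double cover of $M_1$, uses $H^1(M_1,\Z_2)=0$ to trivialize that double cover, and reads off the two global lifts as the two sections. You instead pull $Q_2$ back along $\Phi$ to a Spin structure $\tilde\Lambda_2:\tilde Q_2\to P_1$ on the \emph{same} $\SO(n)$-bundle $P_1$, identify $\Phi$-covering lifts with Spin-structure isomorphisms $Q_1\to\tilde Q_2$ over $P_1$, invoke the standard torsor classification of Spin structures on a fixed $\SO(n)$-bundle to get existence when $H^1(M_1,\Z_2)=0$, and then count the automorphism group $\Aut_{P_1}(Q_1)\cong\{\pm1\}$ using connectedness of $M_1$. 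The two arguments are two faces of the same obstruction: the double cover $I\to M_1$ in the paper is classified by a class in $H^1(M_1,\Z_2)$ that is precisely the difference class of the Spin structures $\Lambda_1$ and $\tilde\Lambda_2$ in your formulation. The paper's version is more self-contained and builds exactly the $\Z_2$-bundle whose triviality is equivalent to the lifting problem; yours leans on the well-known classification of Spin structures as a black box, at the cost of one extra reduction step (passing through $\tilde Q_2$) but gaining the separation of the argument into an existence part and a counting part, which some readers may find clearer.
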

\begin{proof}
For a point $x\in M_1$ the set $I_x$ of  $\Phi_x$-lifting $\Spin(n)$-equivariant diffeomorphisms $Q_{1,x}\to Q_{2,\varphi(x)}$  is a $\{\pm1\}$-torsor, and the union $\cup_{x\in M}I_x$ has a manifold structure such that the obvious projection $I\to M_1$ is a double cover.   Since $H^1(M_1,\Z_2)=0$, this double cover is trivial, and its  sections give two global bundle isomorphisms 	$Q_1\to Q_2$  lifting $\Phi$.
\end{proof}

\section{Locally homogeneous triples}\label{LHCchap}
\subsection{Infinitesimally homogeneous triples}
Let $(M,g)$ be a connected, compact, locally homogeneous Riemannian manifold, $K$ be a connected, compact Lie group, and $p:P\to M$ be a principal $K$-bundle on $M$. 
Let $A$ be a connection on $P$ and fix an $\ad$-invariant inner product on the Lie algebra $\kg$ of $K$; these data   define a $K$-invariant Riemannian metric $g_A$ on $P$ which makes $p$ a Riemannian submersion. Such metrics on principal bundles are called  {\it connection metrics} and have been studied in the literature \cite{FZ, Je, WZ}.   

%Using the terminology introduced in \cite{BaTe}, a connection $A$ on $P$ satisfying the conditions of Definition \ref{DefSymLH} (2) for locally homogeneous triple $(g,P\textmap{p} M,A)$ is called an LH connection on $(M,g)$. 

The connection metric $g_A$ associated with any LH triple  $(g,P\textmap{p} M,A)$ is locally homogeneous, hence it defines a geometric structure in the sense of Thurston \cite{Th} on the total space $P$.  
Therefore the classification of LH triples on a given  connected,  compact LH Riemannian manifold $(M,g)$  is related to the classification of compact geometric  manifolds  which are principal bundles over a geometric base. 
  We refer to \cite{Ba2} and \cite{BaTe} for classes of examples of geometric manifolds which can be obtained in this way (using a LH triple).
In \cite{BaTe} we proved explicit classifications theorems for LH triples on Riemann surfaces. 
\vspace{1.5mm}

%The following  3-dimensional model geometries for 3-manifolds
%%
%$$\mathbf{E}^3,\ \mathbf{S}^3,\ \mathbf{S}^2\times \mathbf{E}^1,\ \mathbf{H}^2\times\mathbf{E}^1,\ \widetilde{\mathbf{SL}_2},\ \mathbf{Nil}^3$$
%%
%from    Thurston's list (\cite{Th}, \cite{Sc})  can be defined by connection metrics associated with LH connections which shows the abundance of 3-dimensional fibered geometries. The classification of 4-dimensional geometries can be found in \cite{Wa}, \cite{Fi}, and one can find many examples of fibered geometric 4-manifolds \cite{Ue1,Ue2}. For instance the geometric structure of a topologically non-trivial principal elliptic bundle over a Riemann surface can be defined by a LH connection metric on a  $T^2$-bundle over it with the geometries modeled in  $\mathbf{Nil}^3\times \mathbf{E}^1$, $\mathbf{S}^3\times \mathbf{E}^1$, $\widetilde{\mathbf{SL}_2}\times \mathbf{E}^1$. 
%
% The motivation for studying and classifying LH triples is that the classification of LH triples is related to the classification of geometric fiber bundles. In this chapter we prove a characterization theorem (Theorem \ref{LSLHTh}) for locally symmetric and LH triples (see Definition \ref{DefSymLH}) and we use this result to prove a classification theorem for LH triples (Theorem \ref{main-DIffcase}).

Let $(M,g)$ be an $n$-dimensional Riemannian manifold, $K$ be a compact Lie group, $P$ be a principal $K$-bundle on $M$, and $A_0$ be a connection on $P$.  Put 
$$G\coloneqq \O(n)\times K\,,Q\coloneqq \O(M)\times_M P\,,\ V\coloneqq (\R^{n*})^{\otimes 4}\oplus \big((\R^{n*})^{\otimes 2}\otimes \kg\big)\,.$$
Note that $G$ comes with obvious representations $r:G\to\GL(n)$, $\rho:G\to \GL(V)$, and $Q$ comes with an obvious bundle morphism $f:Q\to L(M)$ of type $r$  given by the projection  $\O(M)\times_M P\to \O(M)\subset L(M)$.

 The Riemann curvature tensor $R^g$ of $g$  will be regarded as a section of $(\Lambda^1_M)^{\otimes 4}$, and the curvature $F^{A_0}$ of connection $A_0$ will be regarded as a section of the vector bundle  $\Lambda^2_M\otimes\ad(P)\subset (\Lambda^1_M)^{\otimes 2}\otimes\ad(P)$. The pair $\sigma\coloneqq (R^g,F^{A_0})$ is a section of the associated vector bundle $E\coloneqq Q\times_G V$. Let $C_0$ denote the Levi-Civita connection on $\O(M)$ and let $B_0$ be the connection on $Q$ defined by the pair of connections 
$$B_0\coloneqq (C_0,A_0)\in \cal A(\O(M))\times \cal A(P)={\cal A}(Q).$$
\begin{dt} \label{IHTDef} The triple $(g,P\textmap{p} M,A_0)$ will be called infinitesimally homogeneous if the section $\sigma\coloneqq (R^g,F^{A_0})\in \Gamma(E)$ is infinitesimally homogeneous with respect to $B_0$ in the sense of Definition \ref{InfHomSectDef}.
\end{dt}

This condition can be reformulated explicitly as follows: 
\vspace{1mm}

Let $\nabla^{g}$ denotes the Levi-Civita connection of $g$ on $M$ and, let $\nabla^{A_0}$ denote the associted connection on adjoint bundle $\ad(P)$.  We  denote by $\nabla^{g}\otimes \nabla^{A_0}$ the tensor product connection  on  $\Lambda_M^2\otimes\ad(P)$. More generally, we obtain a tensor product connection $(\nabla^{g}\otimes \nabla^{A_0})^i=(\nabla^{g})^{\otimes  (i-1)}\otimes (\nabla^{g}\otimes \nabla^{A_0})$ on the vector bundle 
$$(\Lambda_M^1)^{\otimes (i-1)}\otimes\Lambda_M^2\otimes\ad(P).
$$
For $x\in M$, the Lie algebra of skew-symmetric endomorphism of the Euclidian space $(T_xM,g_x)$ will be denoted by $\so(T_xM)$.
    For any $k\in\N$ and $x\in M$ we define $\hg^{g,A_0}_x{(k)}$ to be the set of all pairs $(u,v)\in \so(T_xM)\oplus\ad({P}_x)$ such that
 $$u\cdot((\nabla^{g})^iR^g)_x=0 \ ,\ (u,v)\cdot((\nabla^{g}\otimes \nabla^{A_0})^iF^{A_0})_x=0 \  \, \hbox{for} \ \, 0\leq i\leq k\,.
 $$
Note that $\hg^{g,A_0}_x{(k)}$ is a Lie subalgebra of $\so(T_xM)\oplus\ad({P}_x)$, and that for any $k\in\N$  
$\hg^{g,A_0}_x{(k+1)}\subset \hg^{g,A_0}_x{(k)}\,.$
Put 
$$k_x^{g,A_0}\coloneqq \min\{k\in\N|\ \hg^{g,A_0}_{x}{(k+1)}= \hg^{g,A_0}_x{(k)}\}\,.$$
Using these definitions and notations we see that
\begin{re}  \label{IHTDef2}
	
	The triple $(g,P\textmap{p} M,A_0)$  is   infinitesimally homogeneous if  and only if for any $(x_1,x_2)\in M\times M$, there exists a pair $(f,\phi)$ where $f:T_{x_1}M\to T_{x_2}M$ is a linear isometry, and $\phi:P_{x_1}\to P_{x_2}$ is a $K$-equivariant isomorphism,  such that for any $0\leq k\leq k_{x_1}^{g,A_0}+1$ one has:
	
	\begin{enumerate}
		\item $f\big(\big\{(\nabla^{g})^k R^g\big\}_{x_1}\big)=\big\{(\nabla^{g})^kR^g\big\}_{x_2}$.
		\item $(f,\phi)\big(\big\{ (\nabla^{g}\otimes \nabla^{A_0})^kF^{A_0}\big\}_{x_1}\big)=\big\{(\nabla^{g}\otimes \nabla^{A_0})^kF^{A_0}\big\}_{x_2}$.
	\end{enumerate}

\end{re}
The  second condition can be formulated explicitly as follows:
   $(\nabla^{g}\otimes \nabla^{A_0})^kF^{A_0}$ is a section of the vector bundle $(\Lambda_M^1)^{\otimes k}\otimes\Lambda_M^2\otimes\ad(P)$. The fiber    $\ad(P)_x$ at $x\in M$ for the adjoint bundle $\ad(P)$ is given by $\ad(P)_x=(P_x\times \kg) /K\,,$
so a $K$-equivariant isomorphism  $\phi: P_{x_1}\to P_{x_2}$  induces a linear isomorphism 
$$\ad(\phi):\ad(P)_{x_1}\to \ad(P)_{x_2} \ ,\ [y,\alpha]\mapsto [\phi(y),\alpha].$$
 The second condition in Remark \ref{IHTDef2}  can be written as
\begin {equation}
\begin{split}
\big\{(\nabla^{g}\otimes \nabla^{A_0})^k F^{A_0} \big\}_{x_2}\big(f(v_1),\dots,f(v_k),f(w_1),f(w_2)\big) =\\
=\ad(\phi)\big( \{ (\nabla^{g}\otimes \nabla^{A_0})^k F^{A_0} \}_{x_1}(v_1,\dots,v_k,w_1,w_2)\big) 
\end{split}
\end {equation}
for any tangent vectors  $v_i\in T_{x_1}M$, $w_1$, $w_2\in T_{x_1}M$\,. \\

The    isomorphism pair $(f,\phi)$  defines a Lie algebra isomorphism 
$$ \so(T_{x_1}M)\oplus\ad({P}_{x_1})\to\so(T_{x_2}M)\oplus\ad({P}_{x_2})\,, $$
which isomorphically maps $\hg_{x_1}^{g,A_0}{(k)}$ onto $\hg^{g,A_0}_{x_2}{(k)}$ for $0\leq k\leq k_{x_1}^{g,A_0}+1$. This implies
\begin{re}
	Let $(g,P\textmap{p} M,A_0)$  be an infinitesimally homogeneous triples. Then $k_x^{g,A_0}$ is independent of $x$. We will denote by $k^{g,A_0}$ the obtained constant.	
\end{re}

Applying Theorem \ref{ConnectionB-Th} to our situation we obtain:

\begin{thry} \label{nablaCA-FA-nabla-alpha}
	Suppose   $(g,P\textmap{p} M,A_0)$  is  an infinitesimally homogeneous triple on $(M,g)$. Then, there exists a pair $(\nabla,A)$ consisting of a metric connection on $M$ and $A\in {\cal A}(P)$ with the following properties:
	$$\nabla  R^\nabla=0,\ \nabla T^\nabla=0,\ (\nabla\otimes \nabla^A)F^A=0,\  (\nabla\otimes \nabla^A)(A-A_0)=0\,.
	$$
\end{thry}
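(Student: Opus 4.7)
The plan is to apply Theorem \ref{ConnectionB-Th} directly to the system of data $(M,Q,\rho,f,B_0,\sigma)$ constructed above Definition \ref{IHTDef}. By hypothesis the section $\sigma=(R^g,F^{A_0})\in\Gamma(E)$ is infinitesimally homogeneous with respect to $B_0=(C_0,A_0)$, and since $G=\O(n)\times K$ is compact (as $K$ is compact), the reductivity of $(G,H^\sigma_{q_0})$ is automatic by Remark \ref{CompRed}. Theorem \ref{ConnectionB-Th} then yields a connection $B\in{\cal A}(Q)$ with
\begin{equation*}
\nabla^B_\EE\sigma=0,\qquad (\nabla^B_\MM\otimes\nabla^B_\add)(B-B_0)=0.
\end{equation*}

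Using the product structure $Q=\O(M)\times_M P$, I would decompose $B=(C,A)$ with $C\in{\cal A}(\O(M))$ and $A\in{\cal A}(P)$, and denote by $\nabla$ the metric linear connection on $M$ associated with $C$. Under the corresponding splittings $\ad(Q)=\so(T_M)\oplus\ad(P)$ and $E=(\Lambda^1_M)^{\otimes 4}\oplus\bigl((\Lambda^1_M)^{\otimes 2}\otimes\ad(P)\bigr)$, the connection $\nabla^B_\EE$ breaks up into $\nabla$ on the first summand and $\nabla\otimes\nabla^A$ on the second, while $\nabla^B_\add$ breaks up into $\nabla$ on $\so(T_M)$ and $\nabla^A$ on $\ad(P)$. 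The two conclusions above therefore unpack into
\begin{equation*}
\nabla R^g=0,\quad (\nabla\otimes\nabla^A)F^{A_0}=0,\quad \nabla(C-C_0)=0,\quad (\nabla\otimes\nabla^A)(A-A_0)=0.
\end{equation*}
The last identity is exactly one of the four conclusions we need, and from the first together with the third, Corollary \ref{C_C_0CO} immediately yields $\nabla R^\nabla=\nabla T^\nabla=0$.

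It remains to upgrade $(\nabla\otimes\nabla^A)F^{A_0}=0$ to $(\nabla\otimes\nabla^A)F^A=0$. Setting $\alpha\coloneqq A-A_0\in A^1(M,\ad(P))$, the standard variation formula gives $F^A-F^{A_0}=d^A\alpha-\tfrac{1}{2}[\alpha\wedge\alpha]$. Since $(\nabla\otimes\nabla^A)\alpha=0$ and the Lie bracket on $\ad(P)$ is $\nabla^A$-parallel (an instance of Remark \ref{parallelPairing}), the bracket term is $(\nabla\otimes\nabla^A)$-parallel. A short computation using $(\nabla\otimes\nabla^A)\alpha=0$ shows $(d^A\alpha)(X,Y)=\alpha(T^\nabla(X,Y))$, so $d^A\alpha$ is the contraction of the parallel tensors $\alpha$ and $T^\nabla$, and is therefore also $(\nabla\otimes\nabla^A)$-parallel. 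Hence $(\nabla\otimes\nabla^A)(F^A-F^{A_0})=0$, and combined with $(\nabla\otimes\nabla^A)F^{A_0}=0$ this gives the desired identity.

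I expect the main obstacle to be the clean verification of the identity $d^A\alpha=\alpha\circ T^\nabla$ in the presence of torsion, together with the subsequent upgrade from $F^{A_0}$- to $F^A$-parallelism; the other three conclusions of the theorem follow essentially immediately from the decomposition of $B$ and the already-established Corollary \ref{C_C_0CO}.
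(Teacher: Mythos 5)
Your proposal is correct and follows essentially the same route as the paper: invoke Corollary \ref{ConnectionBCo} (a special case of Theorem \ref{ConnectionB-Th}, which you cite) to produce $B=(C,A)$, unpack the two parallelism conditions under the splittings of $E$ and $\ad(Q)$, pass from $(R^g,C-C_0)$ to $(R^\nabla,T^\nabla)$ via Corollary \ref{C_C_0CO}, and upgrade $(\nabla\otimes\nabla^A)F^{A_0}=0$ to $(\nabla\otimes\nabla^A)F^A=0$ by the variation formula together with $d^A\alpha=\alpha\circ T^\nabla$ and the parallelism of $[\alpha\wedge\alpha]$. The only cosmetic difference is that the paper packages this last upgrade as a standalone lemma (Lemma \ref{NablaFNablaF0}) while you inline the argument.
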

\begin{proof}
Let $Q\coloneqq \O(M)\times_M P$ and $E\coloneqq Q\times_{G} V$ be as above. Let $\sigma\in \Gamma(E)$ be the section defined by the pair $(R^g,F^{A_0})$. By Corollary \ref{ConnectionBCo} there exists a connection $B=(C,A)\in \cal A(\O(M))\times \cal A(P)={\cal A}(Q)$	such that: 
	$$\nabla^{B}_\EE \sigma=0\ , \   (\nabla^{B}_{\MM}\otimes\nabla^B_\add)(B-B_0)=0\,.
$$ 
Let $\nabla$ denote the induced metric connection on $M$ by $C$ and $\nabla^A$ denote the induced connection on the adjoint bundle $\ad(P)$  by $A$. From
$\nabla^{B}_\EE \sigma =0$ we obtain
\begin{equation}
\nabla R^g =0,\ (\nabla\otimes \nabla^A )F^{A_0}=0\,.
\end{equation}
The difference $B-B_0$ can be identified with the pair $(C-C_0,A-A_0)$. Hence, equation $(\nabla^{B}_{\MM}\otimes\nabla^B_\add)(B-B_0)=0$ is equivalent to
\begin{equation}
\nabla(C-C_0)=0\,,\ (\nabla\otimes \nabla^A )(A-A_0)=0\,,
\end{equation}
The conditions $\nabla R^g=0\,$, $\nabla (C-C_0)=0$ and Corollary \ref {C_C_0CO} imply $\nabla R^\nabla=0\,$, $\nabla T^\nabla=0$. The conditions $\nabla T^\nabla=0$ ,   $(\nabla\otimes \nabla^A )(A-A_0)=0$, $(\nabla\otimes \nabla^A )F^{A_0}=0$ and Lemma \ref{NablaFNablaF0} below give $(\nabla\otimes \nabla^A )F^{A}=0$.
\end{proof}
\begin{lm}\label{NablaFNablaF0}
	Retain the notation used above. If $\nabla T^\nabla=0$, $\,(\nabla \otimes \nabla^A )(A-A_0)=0$, then $(\nabla \otimes \nabla^A )F^{A_0}=0$ if and only if $(\nabla \otimes \nabla^A )F^{A}=0$.
\end{lm}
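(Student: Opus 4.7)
The plan is to express the difference $F^A - F^{A_0}$ in terms of $\alpha \coloneqq A - A_0 \in A^1(\ad(P))$ and then show that each summand is $(\nabla\otimes\nabla^A)$-parallel under the two hypotheses. Using the standard affine formula for the curvature (swapping the roles of $A$ and $A_0$ so that all twisted derivatives involve $\nabla^A$ rather than $\nabla^{A_0}$) one has
\[
F^{A_0} \;=\; F^A \;-\; d^A\alpha \;+\; \tfrac{1}{2}[\alpha\wedge\alpha].
\]
It therefore suffices to prove $(\nabla\otimes\nabla^A)(d^A\alpha)=0$ and $(\nabla\otimes\nabla^A)[\alpha\wedge\alpha]=0$.

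For the first term I would invoke the identity expressing the twisted exterior derivative in terms of the full covariant derivative and the torsion: for any $\eta\in A^1(\ad(P))$,
\[
d^A\eta(X,Y) \;=\; \bigl((\nabla\otimes\nabla^A)_X\eta\bigr)(Y) \;-\; \bigl((\nabla\otimes\nabla^A)_Y\eta\bigr)(X) \;+\; \eta\bigl(T^\nabla(X,Y)\bigr).
\]
Applied with $\eta=\alpha$, the hypothesis $(\nabla\otimes\nabla^A)\alpha=0$ makes the first two terms drop out, leaving $d^A\alpha(X,Y)=\alpha(T^\nabla(X,Y))$. The right-hand side is the image of $\alpha\otimes T^\nabla$ under a tensorial pairing (contraction) of bundles associated with $\O(M)\times_M P$; by Remark \ref{LeibnizRe} (the Leibniz rule for such parallel pairings) together with $(\nabla\otimes\nabla^A)\alpha=0$ and $\nabla T^\nabla=0$, we obtain $(\nabla\otimes\nabla^A)(d^A\alpha)=0$.

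For the second term, the Lie bracket $\g\times\g\to\g$ is $\Ad_K$-invariant, so it induces a bundle morphism $\ad(P)\otimes\ad(P)\to\ad(P)$ which is parallel with respect to any connection induced by a connection on $P$ (cf.\ Remark \ref{parallelPairing}). Combining this with the Leibniz rule and $(\nabla\otimes\nabla^A)\alpha=0$ yields $(\nabla\otimes\nabla^A)[\alpha\wedge\alpha]=0$. Adding the two contributions gives
\[
(\nabla\otimes\nabla^A)(F^A - F^{A_0}) \;=\; 0,
\]
from which the claimed equivalence follows at once.

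The only delicate point is bookkeeping: one must verify that the relevant pairings are genuinely parallel with respect to the connection $\nabla^A$ on $\ad(P)$ (not $\nabla^{A_0}$), which is why writing the difference as $F^{A_0}=F^A-d^A\alpha+\tfrac12[\alpha\wedge\alpha]$ rather than the symmetric version is essential; otherwise the proof is a routine application of the Leibniz formalism developed in Section \ref{InfHomogSect}.
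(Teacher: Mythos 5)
Your proposal is correct and follows essentially the same route as the paper's proof: decompose $F^{A_0}=F^A-d^A\alpha+\tfrac12[\alpha\wedge\alpha]$, deduce $d^A\alpha(X,Y)=\alpha(T^\nabla(X,Y))$ from the parallelism of $\alpha$, and then show both $d^A\alpha$ and $[\alpha\wedge\alpha]$ are $(\nabla\otimes\nabla^A)$-parallel. The only difference is cosmetic: the paper carries out the Leibniz-rule verifications by explicit vector-field computations, whereas you package the same steps as applications of Remark~\ref{parallelPairing} and Remark~\ref{LeibnizRe} to parallel tensorial pairings, which is equally valid.
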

\begin{proof}
Put $\alpha\coloneqq A-A_0\in A^1(M,\ad(P))$. One has the following variation formula for the curvature 2-form
	\begin{equation}\label{FAplusAlpha}
	F^{A_0}=F^{A-\alpha}=F^ {A}-d^A\alpha+\frac{1}{2}[\alpha\wedge\alpha]\,.
	\end{equation}
	Let $X$, $Y$, $Z\in {\mathfrak X}(M)$ be arbitrary vector fields on $M$. In one hand,
	\begin{equation}\label{dAalpha11}
		(d^A\alpha)(X,Y) =\nabla^A_X(\alpha(Y))-\nabla^A_Y(\alpha(X))-\alpha([X,Y])\,.
	\end{equation}
On the other hand, since  $\alpha\in A^1(M,\ad (P))$ is $(\nabla \otimes \nabla^A )$-parallel, one has
	\begin{equation}\label{nablaCAalpha1}
		0=((\nabla \otimes \nabla^A )\alpha)(X,Y)=\nabla_X^A(\alpha(Y))-\alpha(\nabla_XY)\,.
	\end{equation}
	Using (\ref{dAalpha11}) and (\ref{nablaCAalpha1}), one obtains
	\begin{equation}\label{dAalpha12} 
		(d^A\alpha)(X,Y)=\alpha(\nabla_XY)-\alpha(\nabla_YX)-\alpha([X,Y])=\alpha(T^\nabla (X,Y))\,.
	\end{equation}
	and from (\ref{nablaCAalpha1}) one gets
	 $\nabla_X^A(\alpha(T^\nabla (Y,Z)))=\alpha(\nabla_XT^\nabla (Y,Z))\,.$
Using  (\ref{nablaCAalpha1}) and (\ref{dAalpha12}) one obtains 
	\begin{equation*}
		\begin{split}
			((\nabla \otimes \nabla^A ) d^A\alpha)(X,Y,&Z)=\nabla^A_X ((d^A\alpha)(Y,Z))- (d^A\alpha)(\nabla_XY,Z)-  (d^A\alpha)(Y,\nabla_XZ)\\
			&=\nabla^A_X (\alpha(T^\nabla (Y,Z)))- \alpha(T^\nabla (\nabla_XY,Z))-\alpha(T^\nabla (Y,\nabla_XZ))\\
			&= \alpha(\nabla_XT^\nabla(Y,Z))- \alpha(T^\nabla(\nabla_XY,Z))-\alpha(T^\nabla(Y,\nabla_XZ))\\
			&=\alpha((\nabla_X T^\nabla)(Y,Z))=0.
		\end{split}
	\end{equation*}
	Put $\eta=\frac{1}{2}[\alpha\wedge\alpha]$. For arbitrary vector fields $Y,Z\in \mathfrak X(M)$
	\begin{equation}\label{etacroshe}
		2\eta(Y,Z)=[\alpha\wedge\alpha](Y,Z)=[\alpha(Y),\alpha(Z)]-[\alpha(Z),\alpha(Y)]=2[\alpha(Y),\alpha(Z)]\,.
	\end{equation}
	Therefore,
	\begin{equation}\label{nablaetacroshe}
		\nabla^{A}_X(\eta(Y,Z))=\nabla^{A}_X([\alpha(Y),\alpha(Z)])=[\nabla^{A}_X(\alpha(Y)),\alpha(Z)]+[\alpha(Y),\nabla^{A}_X(\alpha(Z))]\,.
	\end{equation}
	Using (\ref{nablaCAalpha1}) and  (\ref{nablaetacroshe}) we obtain 
	\begin{equation*}
		\begin{split}
			((\nabla \otimes & \nabla^A )_X\eta)(Y,Z)= \nabla^{A}_X(\eta(Y,Z))-\eta(\nabla_XY,Z)-\eta(Y,\nabla_XZ)\\
			=&[\nabla^{A}_X(\alpha(Y)),\alpha(Z)]+[\alpha(Y),\nabla^{A}_X(\alpha(Z))] -[\alpha(\nabla_XY),\alpha(Z)]
			 -[\alpha(Y),\alpha(\nabla_XZ)]\\
			 =&[((\nabla \otimes  \nabla^A )_X \alpha)(Y),\alpha(Z)]+[\alpha(Y), ((\nabla \otimes  \nabla^A )_X \alpha)(Z)]=0\,.
		\end{split}
	\end{equation*}
So, we proved that the $\ad$-valued 2-forms $d^A\alpha$ and  $[\alpha\wedge\alpha]$ are $(\nabla \otimes \nabla^A )$-parallel. Using (\ref{FAplusAlpha}), we conclude: $(\nabla \otimes \nabla^A )F^{A_0}=0$ if and only if $(\nabla \otimes \nabla^A )F^{A}=0$.
\end{proof}

Using Ambrose-Singer's theorem (see Theorem \ref{ASTheorem}) we obtain:
\begin{re}
Let $(g,P\textmap{p} M,A_0)$  be an infinitesimally homogeneous triple over $(M,g)$. Then, the Riemannian metric $g$ is locally homogeneous.
\end{re}
A pair $(\nabla,A)$ of a metric connection on $M$ and $A\in {\cal A}(P)$ defines in a canonical way a linear connection $\bar\nabla$ on the tangent bundle $T_P$. This construction will play a major role in what follows, so we explain this construction in detail: 

Recall that, in general, a connection $A$ on the principal $K$-bundle $p:P\to M$ can be identified with the corresponding horizontal distribution. Using the  bundle isomorphism $J: p^*T_M\to A$
 (defined by  the inverse of $\resto{p_*}{A}$), we obtain a linear connection $\nabla^{h,A}\coloneqq J(p^*\nabla)$ on the horizontal subbundle $A\subset T_P$. 
%
% The curvature of connection $\nabla^{h,A}$ is given by $R^{\nabla^{h,A}}=j(p^*(R^\nabla )),$
%
% where $j:\gl(p^*(T_M))=p^*(\gl(T_M))\to \gl(A)$ is the bundle isomorphism induced by $J$.

On the other hand we can define a linear connection $\nabla^{v,A}$ on the vertical bundle $V_P$ via the canonical bundle isomorphism $V_P\simeq P\times \kg$. More precisely, if $a^\#$, $b^\#$ denote the fundamental fields corresponding to $a,b\in \kg$ and if $\tilde X$ denotes the $A$-horizontal lift of $X\in\mathfrak X(M)$, then we define
$$\nabla^{v,A}_{a^\#}b^\#=[a,b]^\# , \ \nabla^{v,A}_{\tilde X}a^\#=0\,.
$$

	Using the direct sum decomposition $T_P=A\oplus V_P$, the linear connection $\nabla^{h,A}$ on the horizontal subbundle $A$, and the linear connection $\nabla^{v,A}$ on the vertical subbundle $V_P$, we obtain a linear connection on $P$ defined by 
	$$\bar\nabla\coloneqq \nabla^{h,A}\oplus \nabla^{v,A}.
	$$ 
%	%
%Let $a^\#$, $b^\#$ be the fundamental fields corresponding to $a,b\in \kg$, and let $\tilde X$, $\tilde Y$ denote the $A$-horizontal lifts of $X, Y\in{\mathfrak X}(M)$. 
%Let $Z\in\mathfrak X(P)$ be a vertical vector field on $P$. 

The straightforward calculations show that \cite [Proposition 2.6]{Ba2}, the connection $\bar \nabla$ has the following properties:

\begin{pr}\label{nablabarPr}
	Let $\bar \nabla$ be the linear connection on the tangent bundle $T_P$  defined as above. Let $\tilde X$, $\tilde Y$ be the $A$-horizontal lift of vector fields $X$, $Y \in \mathfrak X(M)$ and  $Z$ be a vertical vector field on $P$. Then 
	\begin{enumerate}
		\item $\bar \nabla_{\tilde X}\tilde Y=(\nabla_XY)^{\sim},\ \bar \nabla_{\tilde X}Z=[\tilde X, Z].$
		\item $\bar \nabla_{a^\#}b^\#=[a,b]^\# \, ,  \bar \nabla_{a^\#}Z=[a^\#, Z].$
		\item $ \bar \nabla_{\tilde X}a^\#=\bar \nabla_{a^\#}\tilde X=[a^\#,\tilde X]=0.$

	\end{enumerate}
\end{pr}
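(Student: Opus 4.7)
The plan is to verify each of the six identities by direct calculation from the two definitions: $\nabla^{h,A} = J(p^*\nabla)$ on the horizontal bundle $A$, and $\nabla^{v,A}$ on the vertical bundle $V_P$, extended from the prescribed formulas on fundamental fields by $C^\infty(P)$-linearity in the direction slot and the Leibniz rule in the argument slot. Two standard facts about principal connections are used throughout: the fundamental-field map $a \mapsto a^\#$ satisfies $[a^\#, b^\#] = [a,b]^\#$ in the convention already fixed by the statement of part~(2); and the $A$-horizontal distribution is $K$-invariant, so horizontal lifts of vector fields on $M$ are $K$-invariant, which translates to $[a^\#, \tilde X] = 0$ for every $a \in \kg$ and every $X \in \mathfrak{X}(M)$.

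The first identity of~(1) is immediate from the defining property of the pullback connection, $(p^*\nabla)_\xi (p^*Y) = p^*(\nabla_{p_*\xi} Y)$: applied with $\xi = \tilde X$ (so $p_*\xi = X$) and followed by $J$, it gives $\bar\nabla_{\tilde X}\tilde Y = J(p^*\nabla_X Y) = (\nabla_X Y)^{\sim}$. The same formula with $\xi = a^\#$, for which $p_* a^\# = 0$, yields $\bar\nabla_{a^\#}\tilde X = 0$, one of the three equalities in~(3). The remaining identity $\bar\nabla_{\tilde X}a^\# = 0$ of~(3) is the second defining relation of $\nabla^{v,A}$, and $[a^\#,\tilde X] = 0$ is the $K$-invariance recalled above. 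The first relation in~(2), $\bar\nabla_{a^\#} b^\# = [a,b]^\#$, is the first defining relation of $\nabla^{v,A}$.

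For the two mixed formulas it suffices to expand a vertical field $Z$ in a global frame of $V_P$ consisting of fundamental fields: write $Z = \sum_i f_i\, a_i^\#$ with $(a_i)$ a basis of $\kg$ and $f_i \in C^\infty(P)$. By the Leibniz rule one gets
\[
\bar\nabla_{\tilde X} Z = \sum_i (\tilde X f_i)\, a_i^\#, \qquad \bar\nabla_{a^\#} Z = \sum_i (a^\# f_i)\, a_i^\# + \sum_i f_i\, [a,a_i]^\#,
\]
while the brackets expand as
\[
[\tilde X, Z] = \sum_i (\tilde X f_i)\, a_i^\# + \sum_i f_i\, [\tilde X, a_i^\#], \qquad [a^\#, Z] = \sum_i (a^\# f_i)\, a_i^\# + \sum_i f_i\, [a^\#, a_i^\#].
\]
Matching these pairs term by term via $[a^\#, \tilde X] = 0$ and $[a^\#, a_i^\#] = [a,a_i]^\#$ yields the second identity of~(1) and the second of~(2), completing all six verifications. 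No genuine obstacle arises; the only point that must be pinned down in advance is the sign convention on fundamental vector fields, and this is already dictated by part~(2).
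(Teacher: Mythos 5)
Your proof is correct and takes essentially the same route as the paper, which defers the ``straightforward calculations'' to \cite[Proposition 2.6]{Ba2}: you compute directly from the two defining formulas, using the characteristic property of the pullback connection for the horizontal identities and a global frame of fundamental vector fields together with the Leibniz rule for the vertical ones. The only point to state more precisely is that $[a^\#,b^\#]=[a,b]^\#$ is a general fact for the fundamental vector fields of a \emph{right} principal action (Kobayashi--Nomizu, Vol.~I, Prop.~4.1), rather than something ``dictated by part~(2)'' of the statement, which is itself the definition of $\nabla^{v,A}$ and says nothing about the Lie bracket.
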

%\begin{proof} The first three properties follow from the definition of $\bar\nabla$. We prove (4).
%Since $Z\in \mathfrak X(P)$ is a vertical vector field, there exists a family of fundamental fields $b_1^\#,\dots,b_r^\#$ and a family of differentiable functions $f_1,\dots,f_r$ on $P$ such that $Z=\sum_{i=1}^rf_ib_i^\#$. Therefore, we have
%%
%\begin{equation*}
%	\bar \nabla_{\tilde X}Z=\bar \nabla_{\tilde X}\sum_{i=1}^rf_ib_i^\#=\sum_{i=1}^r( \tilde X(f_i)b_i^\#+f_i\bar \nabla_{\tilde X}b_i^\#)
%	=\sum_{i=1}^r  \tilde X(f_i)b_i^\#.
%\end{equation*}
%Also we have 
%\begin{equation*}
%	[\tilde X,Z]=[\tilde X,\sum_{i=1}^rf_ib_i^\#]=\sum_{i=1}^r(  \tilde X(f_i)b_i^\#+f_i[\tilde X,b_i^\#])
%	=\sum_{i=1}^r  \tilde X(f_i)b_i^\#\,,
%\end{equation*}
%which proves (4). To prove (5) let $Z$ decomposed as above, so we have
%%
%\begin{equation*}
%\begin{split}
%	\bar \nabla_{a^\#}Z&=\bar \nabla_{a^\#}\sum_{i=1}^rf_ib_i^\#=\sum_{i=1}^r( a^\#(f_i)b_i^\#+f_i\bar \nabla_{a^\#}b_i^\#)\\
%	&=\sum_{i=1}^r\big( a^\#(f_i)b_i^\#+f_i[a^\#,b_i^\#]\big)=\sum_{i=1}^r[a^\#,f_ib_i^\#]=[a^\#,Z]\,.
%	\end{split}
%\end{equation*}
% 
%\end{proof}

The canonical bundle isomorphism $\xi: P\times \kg\to V_P$ can be used to identify the space of smooth maps ${\cal C}^\infty(P, \kg)$ with the Lie algebra of vertical vector fields on $P$. This identification is given by $f\mapsto \xi(f)$, where, for $f\in {\cal C}^\infty(P, \kg)$, $\xi(f):P\to V_P$ is  the vertical vector field given by
$$\xi(f)_y\coloneqq f(y)^\#_y=\at{\frac{d}{dt}}{t=0}\big(y\exp t f(y)\big).
$$
%

%A smooth map $\nu:P\to \kg$ is called $K$-equivariant if, for each $(y,k)\in P\times K$ one has $\nu(yk)=\ad_{k^{-1}}\nu(y)$, where $\ad:K\to \GL(\kg)$ denotes the adjoint representation. Let $A^0_{\ad}(P,\kg)$ denote the space of all $\ad$-equivariant smooth maps $\nu:P\to \kg$, then there exists a natural identification given at a point $y\in P_x$ ($x\in M$) by
%%
%$$A^0_{\ad}(P,\kg)\to A^0(\ad (P)), \ \nu\mapsto (x\mapsto [y,\nu(y)])
%$$
%%
%Moreover, the space of Lie algebra valued tensorial $r$-forms of type $\ad$ on $P$, denoted by $A^r_{\ad}(P,\kg)$ can be identified  with the space of $\ad(P)$-valued r-forms denoted by $A^r(\ad(P))$. Recall that, the Lie algebra valued $r$-form $\Psi$ is called of type $\ad$ if it is a horizontal, and $\ad$-equivariant, that is, $R_k^*\Psi=\ad_{k^{-1}}\Psi$ for each $k\in K$ (see \cite{KN} for the identification). The next lemma gives two important properties of the vertical vector field $\xi(\nu)$ on $P$ generated by a section $\nu\in A^0(\ad(P))$ of the adjoint bundle.
%

The following proposition can be proved easily \cite [Lemma 2.7, Prop. 2.8.] {Ba2}.
\begin{pr}\label{XTILDEXINU}
	Let $p:P\to M$ be a principal $K$-bundle and $A\in {\cal A}(P)$. Let $\nu\in A^0(\ad (P))$ and $a^\#$ denote the fundamental vector field corresponding to $a\in\kg$. Let $\tilde X$ be the $A$-horizontal lift of vector field $X\in \mathfrak X(M)$. Then, one has
	$$\bar\nabla_{a^\#}\xi(\nu)=[a^\#,\xi(\nu)]=0\,,\ \bar \nabla_{\tilde X}\xi(\nu)=[\tilde{X},\xi(\nu)]=\xi(\nabla^A_X\nu).$$
\end{pr}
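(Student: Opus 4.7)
\textbf{Proof plan for Proposition \ref{XTILDEXINU}.} The starting observation is that both claimed identities for $\bar\nabla$ are immediate from Proposition \ref{nablabarPr}: since $\xi(\nu)$ is a vertical vector field, part (2) of that proposition gives $\bar\nabla_{a^\#}\xi(\nu)=[a^\#,\xi(\nu)]$, and part (1) gives $\bar\nabla_{\tilde X}\xi(\nu)=[\tilde X,\xi(\nu)]$. So the entire content of the proposition reduces to the two Lie bracket identities
$$[a^\#,\xi(\nu)]=0,\qquad [\tilde X,\xi(\nu)]=\xi(\nabla^A_X\nu).$$

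For the first identity, the key point is that $\xi(\nu)$ is $K$-invariant under the right $K$-action on $P$. Translating the section $\nu\in A^0(\ad(P))$ to the $K$-equivariant map $\tilde\nu:P\to\kg$ characterized by $\tilde\nu(yk)=\ad_{k^{-1}}\tilde\nu(y)$, I would compute
$$(R_k)_*\xi(\tilde\nu)_y=(R_k)_*\tilde\nu(y)^{\#}_y=(\ad_{k^{-1}}\tilde\nu(y))^{\#}_{yk}=\tilde\nu(yk)^{\#}_{yk}=\xi(\tilde\nu)_{yk},$$
using the standard identity $(R_k)_*b^{\#}=(\ad_{k^{-1}}b)^{\#}$ for fundamental fields. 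Taking $k=\exp(ta)$ and differentiating at $t=0$ yields $\mathcal{L}_{a^\#}\xi(\nu)=0$, i.e.\ $[a^\#,\xi(\nu)]=0$.

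For the second identity I would apply both sides to an arbitrary $f\in\mathcal{C}^\infty(P)$ and use the flow $\phi_s$ of $\tilde X$. Because $\tilde X$ is the $A$-horizontal lift of $X$, it is $K$-invariant, so $\phi_s(yk)=\phi_s(y)\,k$ for every $k\in K$. Then $\xi(\tilde\nu)_y f=\frac{d}{dt}|_{t=0}f(y\exp t\tilde\nu(y))$ and
$$\tilde X(\xi(\tilde\nu)f)(y)=\tfrac{d}{ds}|_{s=0}\tfrac{d}{dt}|_{t=0}f\bigl(\phi_s(y)\exp t\,\tilde\nu(\phi_s(y))\bigr),$$
$$\xi(\tilde\nu)(\tilde X f)(y)=\tfrac{d}{dt}|_{t=0}\tfrac{d}{ds}|_{s=0}f\bigl(\phi_s(y\exp t\tilde\nu(y))\bigr)=\tfrac{d}{dt}|_{t=0}\tfrac{d}{ds}|_{s=0}f\bigl(\phi_s(y)\exp t\,\tilde\nu(y)\bigr),$$
where the last step uses the $K$-equivariance of $\phi_s$. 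Subtracting and exchanging the order of differentiation in the second expression, the two $t$-variables now carry the constant $\tilde\nu(y)$ vs.\ the $s$-dependent $\tilde\nu(\phi_s(y))$, so only the $s$-derivative of $\tilde\nu(\phi_s(y))-\tilde\nu(y)$ survives. Since this difference vanishes at $s=0$ and has $s$-derivative $(\tilde X\tilde\nu)(y)$, a short computation yields
$$[\tilde X,\xi(\nu)]_y f=\bigl((\tilde X\tilde\nu)(y)\bigr)^{\#}_y f=\xi(\tilde X\tilde\nu)_y f.$$
To close the argument I would invoke the standard identification (a basic fact about associated bundles) that $\tilde X\tilde\nu$ is precisely the $K$-equivariant map associated with $\nabla^A_X\nu\in A^0(\ad(P))$, giving $[\tilde X,\xi(\nu)]=\xi(\nabla^A_X\nu)$.

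The only step that needs care is the double-derivative manipulation in the second identity; the rest is formal. I would expect the main obstacle to be bookkeeping—keeping track of which derivatives act on which variable and using the $K$-invariance of $\phi_s$ at the correct moment—rather than any conceptual difficulty, since the proposition is essentially a direct translation between the bracket of vector fields on $P$ and the covariant derivative on the associated bundle $\ad(P)$.
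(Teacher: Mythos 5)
The paper gives no proof of this proposition, deferring instead to \cite[Lemma 2.7, Prop.\ 2.8]{Ba2}; your argument is correct and is the natural self-contained one. The reduction to the two Lie-bracket identities via Proposition \ref{nablabarPr} (since $\xi(\nu)$ is vertical), the $K$-invariance computation $(R_k)_*\xi(\tilde\nu)_y=\xi(\tilde\nu)_{yk}$ giving $\mathcal L_{a^\#}\xi(\nu)=0$, and the flow computation together with the standard identification of the equivariant map of $\nabla^A_X\nu$ with the horizontal derivative $\tilde X\tilde\nu$ are all sound; the order-of-differentiation step you flagged goes through precisely because $\tilde\nu(\phi_s(y))-\tilde\nu(y)$ vanishes at $s=0$ and the fundamental field of $0\in\kg$ is zero, so no cross term survives.
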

%
%
%\begin{pr}\label{tildXxinuaSHARP}
%	Let $\nu\in A^0(\ad (P))$ be a section of the adjoint bundle and denote by $\xi(\nu)$ the corresponding vertical field. Let $\tilde X$ be the $A$-horizontal lift of the vector field $X\in\mathfrak {X}(M)$ and let $a^\#$ denote the fundamental vector field corresponding to $a\in\kg$. Then, one has
%	%
%		$$\bar \nabla_{\tilde X}\xi(\nu)=\xi(\nabla_X^A\nu),\ \ \bar\nabla_{a^\#}\xi(\nu)=0\,.$$
%	%
%\end{pr}
%

%
\begin{thry} \label{nablabarIsSinger}
	Let  $(\nabla,A)$ be a pair consisting of a metric connection on $M$ and a connection $A\in {\cal A}(P)$. Suppose $\nabla  R^\nabla =0$, $\nabla T^\nabla=0$, $(\nabla\otimes \nabla^A)F^A=0$. Then, the associated connection $\bar\nabla\coloneqq \nabla^{h,A}\oplus \nabla^{v,A}$ satisfies the following conditions:
	$$\bar \nabla R^{\bar \nabla}=0\,, \, \bar \nabla T^{\bar \nabla}=0.$$
\end{thry}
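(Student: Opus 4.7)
The strategy is to compute $T^{\bar\nabla}$ and $R^{\bar\nabla}$ explicitly on the two natural kinds of generating sections of $T_P$, namely horizontal lifts $\tilde X$ of vector fields on $M$ and fundamental vertical fields $a^\#$ with $a\in\kg$, and then verify parallelism by a Leibniz-rule computation that reduces to the three parallelism hypotheses $\nabla R^\nabla=0$, $\nabla T^\nabla=0$, $(\nabla\otimes\nabla^A)F^A=0$. The crucial structural fact is that, by construction, the splitting $T_P=A\oplus V_P$ is $\bar\nabla$-parallel, so every component of $T^{\bar\nabla}$ or $R^{\bar\nabla}$ (sorted according to horizontal/vertical inputs and output) is separately a well-defined tensor, and it suffices to verify parallelism for each.

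First I compute the torsion. Combining Proposition \ref{nablabarPr} with the standard bracket identities
\[
[\tilde X,\tilde Y]=\widetilde{[X,Y]}-\xi(F^A(X,Y)),\qquad [\tilde X,a^\#]=0,\qquad [a^\#,b^\#]=[a,b]^\#,
\]
one finds $T^{\bar\nabla}(\tilde X,\tilde Y)=\widetilde{T^\nabla(X,Y)}+\xi(F^A(X,Y))$, while $T^{\bar\nabla}(\tilde X,a^\#)=0$ and $T^{\bar\nabla}(a^\#,b^\#)=[a,b]^\#$ (or a similar universal expression in $\kg$). A similar enumeration gives $R^{\bar\nabla}$: on purely horizontal inputs it splits into a horizontal piece carrying the lift of $R^\nabla(X,Y)Z$ and a vertical piece built out of $F^A$ and $\nabla^A F^A$; on mixed and purely vertical inputs every contribution is either zero or a term involving only $F^A$ and brackets in $\kg$.

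Second, I verify parallelism. For the horizontal direction $\bar\nabla_{\tilde W}$, Propositions \ref{nablabarPr}(1) and \ref{XTILDEXINU} give $\bar\nabla_{\tilde W}\widetilde{S}=\widetilde{\nabla_W S}$ for any tensor $S$ on $M$ and $\bar\nabla_{\tilde W}\xi(\nu)=\xi(\nabla^A_W\nu)$ for any $\nu\in A^0(\ad P)$. Applying the Leibniz rule to each torsion/curvature component, the result is the horizontal lift of $(\nabla_W T^\nabla)$, $(\nabla_W R^\nabla)$ and $((\nabla\otimes\nabla^A)_W F^A)$, all zero by hypothesis. For the vertical direction $\bar\nabla_{c^\#}$, one uses the rules $\bar\nabla_{c^\#}\tilde X=0$ and $\bar\nabla_{c^\#}\xi(\nu)=0$, so $\bar\nabla_{c^\#}$ annihilates both the horizontal-lift pieces and the $\xi$-lift pieces of $T^{\bar\nabla}$ and $R^{\bar\nabla}$. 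The remaining purely vertical-to-vertical components (such as $[a,b]^\#$) are $\bar\nabla_{\tilde W}$-killed because fundamental fields are horizontal-parallel, and the action of $\bar\nabla_{c^\#}$ on them reduces, via Proposition \ref{nablabarPr}(2), to the Jacobi identity in $\kg$.

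The work is organizationally heavy rather than conceptually hard: the main obstacle will be keeping track of the many cases (six for $T^{\bar\nabla}$, more for $R^{\bar\nabla}$), of the identification $V_P\cong P\times\kg$ via $\xi$, and of the sign conventions in the structure equation $[\tilde X,\tilde Y]_v=\pm\xi(F^A(X,Y))$. Once this bookkeeping is in place, the core principle is clean: $\bar\nabla$ has been built so that horizontal and vertical lifts intertwine the covariant derivatives on $M$ and $P$ with no correction terms, so every parallelism identity on $P$ collapses to the corresponding parallelism identity on $M$, which is assumed.
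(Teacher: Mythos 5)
Your plan coincides with the paper's proof: the paper likewise computes $T^{\bar\nabla}$ and $R^{\bar\nabla}$ on horizontal lifts and fundamental fields (equations (\ref{TorsionNablaBar1}), (\ref{TorsionNablaBar2}), (\ref{CurvatureNablaBar})) and then case-checks $\bar\nabla\bar T=0$, $\bar\nabla\bar R=0$ via Propositions \ref{nablabarPr} and \ref{XTILDEXINU}, the structure equation for $[\tilde X,\tilde Y]$, and the Jacobi identity. One small inaccuracy in your sketch: $\bar R(\tilde X,\tilde Y)\tilde Z$ is purely horizontal, equal to $(R^\nabla(X,Y)Z)^\sim$ (the term $\bar\nabla_{\xi(F^A(X,Y))}\tilde Z$ vanishes by Proposition \ref{nablabarPr}), so $F^A$ enters the curvature only through $\bar R(\tilde X,\tilde Y)c^\#=\bar\nabla_{\xi(F^A(X,Y))}c^\#$, and $(\nabla\otimes\nabla^A)F^A$ appears only after differentiating; this does not affect the validity of the argument.
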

\begin{proof} 
%Since  the connection ${\nabla^{\nu,A}}$ on $V_P\simeq P\times \kg$ is flat, we have
%	%
%\begin{equation} \label{CurvatureNablaBar}
%R^{\bar \nabla}=R^{\nabla^{h,A}} \oplus R^{\nabla^{v,A}}=j(p^*(R^\nabla))\,.	
%\end{equation}
%
%	%
%	Since $\gl(A)$ is a $\bar\nabla$-parallel subbundle of $\gl(T_P)$, and the curvature tensor $R^\nabla$ is $\nabla$-parallel; one has
%	%
%\begin{equation*} 
%\begin{split}
%                      \bar\nabla (j(p^*(R^\nabla)))&=\nabla^{h,A}(j(p^*(R^\nabla)))=j(p^*(\nabla))\big(j(p^*(R^\nabla)) \big)=\\
%	                                                  &=j\big(p^*(\nabla) (p^*(R^\nabla))\big)=j\big(p^*( \nabla R^\nabla)\big)=0\,.
%\end{split}
%\end{equation*} 
%
%	%
%Here, we used the same symbols  for the connections induced by $\bar\nabla$ and  $\nabla$ on the corresponding bundles of endomorphisms. Thus,	the claimed formula $\bar \nabla R^{\bar \nabla}=0$ is proved. For the second,
 Denote the torsion tensor and the curvature tensor of connection $\bar\nabla$ by $\bar T$, $\bar R$ to save on notation. Let $a^\#$, $b^\#$ be the fundamental vector fields corresponding to $a,b\in \kg$, and let $\tilde X$, $\tilde Y$ denote the $A$-horizontal lifts of vector fields  $X$, $Y$ on $M$. 
The properties of $\bar\nabla$ in Proposition \ref{nablabarPr} imply
\begin{equation}\label{TorsionNablaBar1}
\bar T(\tilde X,a^\#)=0\, , \ \bar T(a^\#,b^\#)=[a^\#,b^\#]\,.
\end{equation}

Using the equation $\xi(F^A(X,Y))= [X,Y]^{\sim}-[\tilde X,\tilde Y]$ for the vertical component of the Lie bracket of two horizontal lifts \cite[P. 257]{GH}, one obtains
\begin{equation}\label{TorsionNablaBar2}
 \bar T(\tilde X,\tilde Y)=T^{\nabla}(X,Y)^{\sim}+\xi(F^{A}(X,Y)). 
\end{equation}
%
%		\begin{enumerate}
%		\item   $\bar T(\tilde X,a^\#)=0\ , \ \bar T(a^\#,b^\#)=[a^\#,b^\#]\,.$
		
%		(1)   $\bar T(\tilde X,a^\#)=\bar\nabla_{\tilde X}a^\#-\bar\nabla_{a^\# }\tilde X-[\tilde X,a^\#]=0\,.$\\
		%
		%
		%
		%
%		(2) $\bar T(a^\#,b^\#)=\bar\nabla_{a^\#}b^\#-\bar\nabla_{b^\# }a^\#-[a^\#,b^\#]=[a^\#,b^\#]\,.$
%		\item 
		%
		%
%		\begin{equation*}
%		\begin{split}
%		(3) \, \bar T(\tilde X,\tilde Y)&=\bar\nabla_{\tilde X}{\tilde Y}-\bar\nabla_{\tilde Y}{\tilde X} -[\tilde X ,\tilde  Y]
%		= (\nabla^C_XY-\nabla^C_YX-[ X,Y])^{\sim}+\xi(F^{A}(X,Y))\\
%		%				%
%		&=T^{C}(X,Y)^{\sim}+\xi(F^{A}(X,Y)).
%		\end{split}
%		\end{equation*} 
		%
		%
%		\end{enumerate}
		
To prove $\bar\nabla \bar T=0$, it suffices to show that $(\bar\nabla\bar T)(U,V,W)=0$ for vector fields $U$, $V$, $W\in\mathfrak X(P)$ in the special cases when each one of these three vector fields is either a  horizontal lift, or a fundamental vector field. 
One has
	\begin{equation}\label{barUTVW}
	(\bar \nabla_U\bar T)(V,W)=\bar \nabla_U\bar T(V,W)-\bar T(\bar \nabla_UV,W)-\bar T(V,\bar \nabla_UW)\,.
	\end{equation}
If $U=\tilde X$, $V=\tilde Y$, $W=\tilde Z$ then 
		\begin{equation}\label{nablaUTVW}
			\begin{split}
				\bar \nabla_{U}\bar T(V,W)&=\bar \nabla_{\tilde X}\big((T^{\nabla}(Y,Z)^{\sim})+\xi (F^A(Y, Z))\big)\\
				%				%
				&=\big(\nabla_{X}T^{\nabla}(Y,Z)\big)^{\sim}+ \xi( \nabla^A_XF^A(Y, Z)).
			\end{split}
		\end{equation}
		In the same way one obtains 
			\begin{equation}\label{barnablaUVW}
			\bar T(\bar \nabla_UV,W)=(T^\nabla(\nabla_{X}Y,Z))^{\sim}+\xi(F^A(\nabla_X Y, Z))\,,
	        \end{equation}
		\begin{equation}\label{barTnablaVUW}
		\bar T(V,\bar \nabla_UW)=(T^\nabla(Y,\nabla_{X}Z))^{\sim}+\xi (F^ A( Y, \nabla_XZ))\,.
	     \end{equation}
		Using the equations (\ref{barUTVW}), (\ref{nablaUTVW}), (\ref{barnablaUVW}), (\ref{barTnablaVUW})  and Proposition \ref{XTILDEXINU} :
		\begin{equation*} 
			\begin{split}
				(\bar \nabla_U\bar T)(V,W) & =\bar \nabla_U\bar T(V,W)-\bar T(\bar \nabla_UV,W)-\bar T(V,\bar \nabla_UW) \\
				& =(\nabla_{X}T^{\nabla}(Y,Z))^{\sim}-(T^{\nabla}(\nabla_XY,Z))^{\sim} -(T^{\nabla}(Y,\nabla_XZ))^{\sim}\\
				&\ + \xi(\nabla^A_XF^A(Y, Z))-\xi (F^A(\nabla_XY, Z))-\xi (F^A( Y, \nabla_XZ))\\
				&=\big((\nabla T^{\nabla})(X,Y,Z)\big)^{\sim}+\xi\big((\nabla\otimes \nabla^A)F^ A)(X,Y,Z)\big)\,.
			\end{split}
		\end{equation*}  
		As $\nabla T^\nabla=0$ and $(\nabla\otimes \nabla^A)F^ A=0$, the right hand side of above equation vanishes. 
		
		If  $U=a^\#$, $V=b^\#$, $W=c^\#$, where $a$, $b$, $c\in\kg$ then by the Jacobi identity 
		$$(\bar \nabla_{a^\#}\bar T)(b^\#,c^\#)=\big([a,[b,c]]+[b,[c,a]]]+[c,[a,b]]\big)^\#=0.
		$$
		%
		%
%		\begin{equation*}
%		\begin{split}
%		(\bar \nabla_{a^\sharp}\bar T)(b^\sharp,c^\sharp)&=\bar \nabla_{a^\sharp}\bar T(b^\sharp,c^\sharp)-\bar T(\bar \nabla_{a^\sharp}b^\sharp,c^\sharp)-\bar T(b^\sharp,\bar \nabla_{a^\sharp}c^\sharp)\\
%		&=\big([a,[b,c]]+[b,[c,a]]]+[c,[a,b]]\big)^\#=0
%		\end{split}
%	\end{equation*}
		%
		%
If $U=a^\#$ and $V=\tilde Y$, $W=\tilde Z$ are the $A$-horizontal lifts of vector fields $Y, Z\in \mathfrak X(M)$ then using Propositions \ref{XTILDEXINU} , \ref{nablabarPr} we get
		$$\bar \nabla_{a^\#}\big((T^{\nabla}(Y,Z))^{\sim}+\xi (F^A(Y,Z))\big)-\bar T(\bar \nabla_{a^\#}\tilde Y,\tilde Z)-\bar T(\tilde Y,\bar \nabla_{a^\#}\tilde Z)=0.$$
In any other possible cases the claim follows directly from the definition of $\bar\nabla$. For the curvature tensor the only non vanishing cases are:
\begin{equation} \label{CurvatureNablaBar}
 \bar R(\tilde X,\tilde Y)\tilde Z= (R^\nabla( X, Y) Z)^\sim\, ,  \ \bar R(\tilde X,\tilde Y)c^\#=\bar \nabla _{\xi(F^A(X,Y))}c^\#. 
\end{equation}
and with the same kind of arguments as above, one obtains $\bar \nabla \bar R=0$.
\end{proof}
Recall that the space of connections ${\cal A}(P)$ for a principal $K$-bundle $P$ over $M$ is an affine space with model space $A^1(\ad(P))$ \cite{DK}. For a connection $A_0\in {\cal A}(P)$, and a 1-form $\alpha\in A^1(\ad(P))$ the connection form $\omega_A$ of $A\coloneqq A_0+\alpha$ is given by $\omega_A=\omega_{A_0}+\alpha$, where the second term on the right has been identified with the associated tensorial 1-form of type $\ad$ on $P$ (see \cite[Example 5.2 p.76]{KN}). In other words, for a tangent vector $v\in T_yP$ the element $\alpha_y(v)\in\kg$ is defined by the equality   $\alpha(p_*(v))=[y,\alpha_y(v)]\in \ad(P_{p(y)})$. So, regarding $\alpha(p_*(v))$ as a $K$-equivariant map $P_{p(y)}\to \kg$, one has $\alpha_y(v)=\alpha(p_*(v))(y)$.
 \begin{lm}\label{Aliftxi}
	Let $A=A_0+\alpha$, where $A,A_0\in \cal A(P)$ and $\alpha\in A^1(\ad(P))$. Let $Z$ be a vector field on $M$, $\tilde Z^{A_0}$ be its $A_0$-horizontal lift, and $\tilde Z^A$ be its $A$-horizontal lift (which coincides with the $A$-horizontal projection of $\tilde Z^{A_0}$).  Then   
	$$\tilde Z^A=\tilde Z^{A_0}-\xi(\alpha(Z))\,.
	$$
\end{lm}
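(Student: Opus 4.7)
The plan is to verify the claimed formula by checking the two defining properties of $\tilde Z^A$: namely, that $\tilde Z^{A_0}-\xi(\alpha(Z))$ is $p$-related to $Z$ and that it is $A$-horizontal. Uniqueness of the horizontal lift of $Z$ with respect to $A$ then forces equality.

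First I would observe that $\xi(\alpha(Z))$ is by construction a vertical vector field on $P$, so $p_*\xi(\alpha(Z))=0$; consequently $p_*\bigl(\tilde Z^{A_0}-\xi(\alpha(Z))\bigr)=p_*\tilde Z^{A_0}=Z$. This disposes of the projection condition.

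For the horizontality condition, I would work with the connection forms. Recall that under the identification of $A^1(\ad(P))$ with tensorial horizontal $\kg$-valued $1$-forms on $P$ of type $\mathrm{ad}$, one has $\omega_A=\omega_{A_0}+\tilde\alpha$, where $\tilde\alpha_y(v)=\alpha\bigl(p_*v\bigr)(y)$ for $v\in T_yP$. Evaluating at $y\in P$ with $x=p(y)$:
\begin{equation*}
\omega_A\bigl(\tilde Z^{A_0}\bigr)_y=\omega_{A_0}\bigl(\tilde Z^{A_0}\bigr)_y+\tilde\alpha_y\bigl(\tilde Z^{A_0}_y\bigr)=0+\alpha(Z_x)(y).
\end{equation*}
On the other hand, $\xi(\alpha(Z))_y$ is the fundamental vector field at $y$ associated to $\alpha(Z)(y)\in\kg$, so $\omega_{A_0}\bigl(\xi(\alpha(Z))\bigr)_y=\alpha(Z)(y)$, and since $\xi(\alpha(Z))$ is vertical, $\tilde\alpha$ vanishes on it; hence $\omega_A\bigl(\xi(\alpha(Z))\bigr)_y=\alpha(Z)(y)$. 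Subtracting gives $\omega_A\bigl(\tilde Z^{A_0}-\xi(\alpha(Z))\bigr)_y=0$, which is exactly $A$-horizontality.

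There is essentially no obstacle here; the only care needed is the bookkeeping between $\alpha$ regarded as an $\ad(P)$-valued $1$-form on $M$ and its associated $\kg$-valued tensorial $1$-form on $P$, and the correct formula $\omega_A=\omega_{A_0}+\tilde\alpha$ that relates the two connection forms. Once these identifications are made explicit at a point, the two computations above finish the proof, and the parenthetical remark in the statement (that $\tilde Z^A$ is the $A$-horizontal projection of $\tilde Z^{A_0}$) follows immediately since $\tilde Z^{A_0}-\tilde Z^A=\xi(\alpha(Z))$ is vertical.
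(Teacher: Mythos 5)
Your argument is correct and follows essentially the same route as the paper: both proofs reduce to evaluating $\omega_A(\tilde Z^{A_0})=\alpha(Z)$ using $\omega_A=\omega_{A_0}+\tilde\alpha$ and then subtracting the corresponding vertical field. The paper simply packages the conclusion via the horizontal-projection formula $Y^A=Y-\xi(\omega_A(Y))$, whereas you check the two defining properties of the $A$-horizontal lift (projection to $Z$ and $A$-horizontality) and invoke uniqueness; the content is identical.
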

\begin{proof}
	For any vector field $Y\in{\mathfrak X}(P)$  the $A$-horizontal projection of $Y$ is given by $Y^A=Y-\xi(\omega_A(Y))$. 
	If now  $Y=\tilde Z^{A_0}$,  we have $\omega_{A_0}(Y)=0$, so for any $y\in P$ we have $\omega_{A,y}(Y)=\alpha_y(Y)=\alpha(Z)(y)$.
\end{proof}
\begin{lm}\label{AA0parallel} Let  $\nabla$ be a linear connection on $M$ and $A\in {\cal A}(P)$ such that
$$\nabla R^\nabla =0\, , \nabla T^\nabla =0\ ,\ (\nabla\otimes \nabla^A) F^A=0\,, (\nabla\otimes \nabla^A) (A-A_0)=0\,.
$$
Then the distributions $A$, $A_0$ are $\bar\nabla$-parallel, where $\bar\nabla=\nabla^{h,A}\oplus \nabla^{v,A}$.
\end{lm}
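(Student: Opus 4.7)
The distribution $A$ is $\bar\nabla$-parallel essentially by construction: since $\bar\nabla$ is defined as the direct sum $\nabla^{h,A}\oplus\nabla^{v,A}$, both summands $A$ and $V_P$ are preserved under covariant differentiation along any vector field on $P$. More concretely, using Proposition \ref{nablabarPr}, one checks $\bar\nabla_{\tilde X}\tilde Y=(\nabla_X Y)^\sim\in\Gamma(A)$ and $\bar\nabla_{a^\#}\tilde Y=0\in\Gamma(A)$, which covers all cases since $A$-horizontal lifts and fundamental vertical fields span $\mathfrak X(P)$ over ${\cal C}^\infty(P)$.

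To show that $A_0$ is $\bar\nabla$-parallel, set $\alpha\coloneqq A-A_0\in A^1(\ad(P))$. By Lemma \ref{Aliftxi}, for any $Z\in\mathfrak X(M)$ the $A_0$-horizontal lift satisfies $\tilde Z^{A_0}=\tilde Z^A+\xi(\alpha(Z))$. Since sections of $A_0$ are locally generated by such lifts, it suffices to show that for $U$ either an $A$-horizontal lift $\tilde X^A$ or a fundamental vertical field $a^\#$, the covariant derivative $\bar\nabla_U\tilde Z^{A_0}$ is again an $A_0$-horizontal lift of a vector field on $M$.

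For $U=\tilde X^A$, Propositions \ref{nablabarPr} and \ref{XTILDEXINU} give
\begin{equation*}
\bar\nabla_{\tilde X^A}\tilde Z^{A_0}=\bar\nabla_{\tilde X^A}\tilde Z^A+\bar\nabla_{\tilde X^A}\xi(\alpha(Z))=(\nabla_X Z)^{\sim A}+\xi(\nabla^A_X(\alpha(Z))).
\end{equation*}
The hypothesis $(\nabla\otimes\nabla^A)(A-A_0)=0$ reads $\nabla^A_X(\alpha(Z))=\alpha(\nabla_X Z)$, hence the right-hand side equals $\widetilde{(\nabla_X Z)}^{A_0}$, again by Lemma \ref{Aliftxi}. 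For $U=a^\#$, Propositions \ref{nablabarPr}(3) and \ref{XTILDEXINU} yield $\bar\nabla_{a^\#}\tilde Z^A=0$ and $\bar\nabla_{a^\#}\xi(\alpha(Z))=0$, so $\bar\nabla_{a^\#}\tilde Z^{A_0}=0\in\Gamma(A_0)$.

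The argument is essentially a direct computation and the only genuine ingredient is the parallelism of $\alpha$; the curvature and torsion hypotheses play no role here (they were used to control $\bar R$ and $\bar T$ in Theorem \ref{nablabarIsSinger}). The mildly delicate step is recognizing that the two correction terms in the first display recombine, via Lemma \ref{Aliftxi}, into the $A_0$-horizontal lift of $\nabla_X Z$ — this is precisely what the parallelism condition on $A-A_0$ guarantees.
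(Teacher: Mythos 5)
Your proof is correct and follows essentially the same route as the paper: for $A$, a direct check using Proposition \ref{nablabarPr}; for $A_0$, the decomposition $\tilde Z^{A_0}=\tilde Z^A+\xi(\alpha(Z))$ from Lemma \ref{Aliftxi} combined with Proposition \ref{XTILDEXINU} and the parallelism of $\alpha=A-A_0$. Your closing observation is also accurate — only $(\nabla\otimes\nabla^A)(A-A_0)=0$ is actually used; the curvature and torsion hypotheses are carried along in the statement only because the lemma is deployed together with Theorem \ref{nablabarIsSinger}.
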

\begin{proof}
To prove that $A$ is $\bar\nabla$-parallel, let $\tilde X$ be the $A$-horizontal lift of vector field $X$ on $M$. We have to show that for any vector field $Y$ on $P$ the vector field $\bar\nabla_Y\tilde X$ is $A$-horizontal. If $Y\coloneqq \tilde Z$ is $A$-horizontal lift of a vector fields $Z$ on $M$ then $\bar\nabla_{Y}\tilde X$ is $A$-horizontal lift of $\nabla_ZX$. If $Y\in\mathfrak X(P)$ is a vertical vector field, then $\bar\nabla_{Y}\tilde X=0$ and therefore $\bar\nabla_{Y}\tilde X\in\Gamma(A)$.
		
For the second property, we will denote by  $\tilde Z^{A_0}$ the  $A_0$-horizontal lift of a vector field   $Z\in{\mathfrak X}(M)$. Since the vector fields of the form  $\tilde Z^{A_0}$ generate $\Gamma(A_0)$ as a ${\cal C}^\infty(P,\R)$-module, it suffices to prove that, for arbitrary vector fields $Y\in\mathfrak X(P)$ and $Z\in{\mathfrak X}(M)$   one has $\bar\nabla_Y(\tilde Z^{A_0})\in\Gamma(A_0)$. Putting $\alpha=A-A_0$ and using Lemma \ref{Aliftxi} we obtain
		$$\tilde Z^A= \tilde Z^{A_0}-\xi(\alpha(Z)).$$
If $Y=\tilde U$ is the  $A$-horizontal lift of a vector field $U$ on $M$, then, using the parallelism assumption $(\nabla\otimes \nabla^A)\alpha=0$, the definition of $\bar\nabla$ and  Proposition \ref{XTILDEXINU},  we obtain
		\begin{equation*}
			\begin{split}
				\bar\nabla_{Y}\tilde Z^{A_0}&=\bar\nabla_{Y}\big(\tilde Z^A+\xi(\alpha(Z))\big)=(\nabla_UZ)^{\sim A}+\bar\nabla_{\tilde U}\xi(\alpha(Z))\\
				&=(\nabla_UZ)^{\sim A}+ \xi\big(\nabla^A_U(\alpha(Z))\big)=(\nabla_UZ)^{\sim A}+\xi(\alpha(\nabla_UZ))\,.
			\end{split}
		\end{equation*}
		But, by   Lemma \ref{Aliftxi}, the right hand side of the above equation is the $A_0$-horizontal lift of $\nabla_UZ$, which proves the claim in this case.  If now $Y=a^\#$ is a fundamental field, then 
		\begin{equation*}
				\bar\nabla_{Y}\tilde Z^{A_0}=\bar\nabla_{a^\#}\big(\tilde Z^A+\xi(\alpha(Z))\big)=\bar\nabla_{a^\#}\tilde Z^A+\bar\nabla_{a^\#}\xi(\alpha(Z))\,.				
		\end{equation*}
		By Proposition \ref{nablabarPr}, $\bar\nabla_{a^\#}\tilde Z^A=0$. Also, as $\alpha(Z)\in A^0(\ad (P))$, using Proposition \ref{XTILDEXINU} we obtain $\bar\nabla_{a^\#}\xi(\alpha(Z))=0$. Therefore, $\bar\nabla_{Y}\tilde Z^{A_0}$ is $A_0$-horizontal. 
\end{proof}
In conclusion, using Theorems \ref{nablaCA-FA-nabla-alpha}, Theorem \ref{nablabarIsSinger}, Lemma \ref{AA0parallel} and Proposition \ref{nablabarPr} we obtain

\begin{thry} \label{ConnectionsCA} Let $(g,P\textmap{p} M,A_0)$  be  infinitesimally homogeneous. There exists a pair $(\nabla,A)$ consisting of a metric connection on $(M,g)$ and $A\in {\cal A}(P)$ such that
$$\nabla  R^\nabla=0,\ \nabla T^\nabla=0,\ (\nabla\otimes \nabla^A)F^A=0,\  (\nabla\otimes \nabla^A)(A-A_0)=0\,.
$$
The  linear connection $\bar\nabla=\nabla^{h,A}\oplus \nabla^{v,A}$ on the tangent bundle $T_P$ associated with this pair has the following properties:
\begin{enumerate}
\item $\bar \nabla R^{\bar \nabla}=\bar \nabla T^{\bar \nabla}=0$,
\item The vector fields $a^\#$ are $\bar\nabla$ parallel along the $A$-horizontal curves,
\item The distributions $A$, $A_0$ are $\bar\nabla$-parallel.
\end{enumerate}
\end{thry}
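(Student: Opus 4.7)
The statement is essentially an assembly theorem: each of the four asserted properties of $(\nabla,A)$ and each of the three asserted properties of $\bar\nabla$ has already been established in the preceding material, so the proof amounts to invoking the right results in the right order and verifying that the hypotheses feed into one another properly.

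My plan is as follows. First, I would apply Theorem \ref{nablaCA-FA-nabla-alpha} to the infinitesimally homogeneous triple $(g,P\to M,A_0)$. This directly produces a metric connection $\nabla$ on $(M,g)$ and a connection $A\in\mathcal{A}(P)$ satisfying the four displayed parallelism conditions
$$\nabla R^\nabla=0,\quad \nabla T^\nabla=0,\quad (\nabla\otimes\nabla^A)F^A=0,\quad (\nabla\otimes\nabla^A)(A-A_0)=0.$$
This gives the first half of the theorem with no further work. I now fix this pair and form $\bar\nabla\coloneqq\nabla^{h,A}\oplus\nabla^{v,A}$ on $T_P$.

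Next, I would verify the three properties of $\bar\nabla$ in turn. For (1), the hypotheses $\nabla R^\nabla=0$, $\nabla T^\nabla=0$, $(\nabla\otimes\nabla^A)F^A=0$ just obtained are precisely those of Theorem \ref{nablabarIsSinger}, which yields $\bar\nabla R^{\bar\nabla}=\bar\nabla T^{\bar\nabla}=0$. For (2), the claim that each fundamental field $a^\#$ is $\bar\nabla$-parallel along $A$-horizontal curves reduces to showing $\bar\nabla_{\tilde X}a^\#=0$ for every $X\in\mathfrak{X}(M)$, which is the third identity of Proposition \ref{nablabarPr}. For (3), the hypotheses of Lemma \ref{AA0parallel} are exactly the four parallelism conditions above, so the lemma immediately gives that both $A$ and $A_0$ are $\bar\nabla$-parallel sub-bundles of $T_P$.

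There is no real obstacle here, since all the substantial work has been done upstream; the only thing to be careful about is book-keeping, namely checking that the pair $(\nabla,A)$ produced by Theorem \ref{nablaCA-FA-nabla-alpha} is the same pair used to build $\bar\nabla$ and to invoke Lemma \ref{AA0parallel} and Theorem \ref{nablabarIsSinger}. Since all three of these subsequent results take as input exactly a pair $(\nabla,A)$ satisfying the four displayed identities, this compatibility is automatic. The proof therefore consists of one application of Theorem \ref{nablaCA-FA-nabla-alpha} followed by three citations (Theorem \ref{nablabarIsSinger}, Proposition \ref{nablabarPr}, Lemma \ref{AA0parallel}).
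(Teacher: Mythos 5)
Your proposal is correct and follows exactly the paper's route: the paper states Theorem \ref{ConnectionsCA} as the conclusion of "using Theorems \ref{nablaCA-FA-nabla-alpha}, Theorem \ref{nablabarIsSinger}, Lemma \ref{AA0parallel} and Proposition \ref{nablabarPr}," which is precisely the assembly you describe. Your reduction of property (2) to the identity $\bar\nabla_{\tilde X}a^\#=0$ from Proposition \ref{nablabarPr} is the right way to make that citation precise.
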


\subsection{Structure and classification theorems}

We begin by recalling  the  results proved in \cite[Ch.VI Section 7]{KN} on the existence and extension properties of  local affine isomorphisms with respect to a linear connection  satisfying the conditions $\nabla R^\nabla=\nabla T^\nabla=0$. Compared to  \cite{KN},  our presentation uses a new formalism:  the space of germs of $\nabla$-affine isomorphisms.

Let $M$ be a  differentiable $n$-manifold, and let $\nabla$ be a linear connection on $M$ satisfying the conditions $\nabla R^\nabla=\nabla T^\nabla=0$. Let $\Sg^\nabla$ be the space of germs of $\nabla$-affine isomorphisms defined between open sets of $M$, and let $\sg:\Sg^\nabla\to M$, $\tg:\Sg^\nabla\to M$ be the source, respectively the target map on this space.   $\Sg^\nabla$ has a canonical structure of a differentiable manifold  and, with respect to this structure, $\sg$ and $\tg$ are local diffeomorphisms.  A germ $\varphi\in \Sg^\nabla$ defines an isomorphism $\varphi_*:T_{\sg(\varphi)}M\to T_{\tg(\varphi)}M$ with the property 
$$\varphi_*(R^\nabla_{\sg(\varphi)})=R^\nabla_{\tg(\varphi)},\ \varphi_*(T^\nabla_{\sg(\varphi)})=T^\nabla_{\tg(\varphi)}\,.
$$ 
Conversely,  \cite[Theorem 7.4, p. 261]{KN} can be reformulated as follows
\begin{re}\label{fphi}
Let $(u,v)\in M\times M$. For any linear isomorphism $f:T_uM\to T_vM$ satisfying
\begin{equation}\label{TRcompat}
f(R^\nabla_{u})=R^\nabla_{v},\ f(T^\nabla_{u})=T^\nabla_{v}	
\end{equation}
there exists a unique germ $\varphi_f\in (\sg,\tg)^{-1}(u,v)$ (of a $\nabla$-affine isomorphism) such that $(\varphi_f)_*=f$.  
\end{re}

Moreover, taking into account \cite[Corollary 7.5, p. 262]{KN} we see that
\begin{re}\label{Trans}
Suppose that $M$ is connected. The map $(\sg,\tg):\Sg^\nabla\to M\times M$ is surjective. 
\end{re}
\begin{proof}
Indeed, let $(u,v)\in M\times M$. Parallel transport along a path $\gamma:[0,1]\to M$ joining $u$ to $v$ defines a linear isomorphism $f:T_uM\to T_vM$ satisfying (\ref{TRcompat}). The claim follows from Remark \ref{fphi}.
	
\end{proof}

Let now $\sigma:M\times M\to M$, $\tau:M\times M\to M$ be the  projections on the two factors, and let $\mathrm{Iso}(\sigma^*(T_M),\tau^*(T_M))\subset \Hom(\sigma^*(T_M),\tau^*(T_M))$ be the (locally trivial) fibre bundle of isomorphisms between the two pull-backs of $T_M$. Conditions (\ref{TRcompat}) define  a closed, locally trivial subbundle $S^\nabla\subset \mathrm{Iso}(\sigma^*(T_M),\tau^*(T_M))$. Remark  \ref{fphi} shows that %
\begin{re}
The natural map $\delta: \Sg^\nabla\to S^\nabla$ given by $\varphi\mapsto\varphi_*$ is bijective.	
\end{re}
This map is an injective immersion,  and it is bijective, but it is {\it not} a diffeomorphism. $\Sg^\nabla$ can be identified with the union of leaves of a foliation of $S^\nabla$ with $n$-dimensional leaves.  The topology of $\Sg^\nabla$ is finer than the topology of $S^\nabla$. The leaves of this foliation are the integrable submanifolds of the involutive distribution ${\cal D}^\nabla\subset T_{S^\nabla}$ defined in the following way: Let $f\in S^\nabla$. Put $u\coloneqq \sigma(f)$, $v\coloneqq \tau(f)$,   $\varphi\coloneqq \delta^{-1}(f)$. For a tangent vector $\xi\in  T_uM$, let $\gamma:(-\varepsilon,\varepsilon)\to M$ be a smooth curve such that $\gamma(0)=u$, $\dot\gamma(0)=\xi$. Using parallel transport with respect to $\nabla$ along the curves $\gamma$, $\varphi\circ\gamma$ we obtain, for any sufficiently small $t\in (-\varepsilon,\varepsilon)$, isomorphisms $a_t:T_uM\to T_{\gamma(t)}M$, $b_t:T_vM\to T_{\varphi(\gamma(t))}M$. Define
$$\lambda_f(\xi)\coloneqq \at{\frac{d}{dt}}{t=0} (b_t\circ f\circ a_t^{-1})\in T_{(u,v)}(S^\nabla)\,.
$$ 
The distribution ${\cal D}^\nabla$ is defined by 
$${\cal D}_f^\nabla\coloneqq \{\lambda_f(\xi)|\ \xi\in T_{\sigma(f)}M\}\,.$$
The curve $t\mapsto b_t\circ f\circ a_t^{-1}$ will be an integral curve of this distribution.  
Note that we may take $\gamma$ to be the $\nabla$-geodesic with initial condition $(u,\xi)$, and then $\varphi\circ\gamma$ will be  the $\nabla$-geodesic with initial condition $(v,f(\xi))$. Using this remark, we obtain
\begin{re}
Let $\varphi\in \Sg^\nabla$, and $\xi\in T_{\sg(\varphi)}M$. Suppose that $\nabla$-geodesics $\gamma$, $\eta$ with initial conditions $(\sg(\varphi),\xi)$, $(\tg(\varphi),\varphi_*(\xi))$  	respectively can be both extended on the interval $(\alpha,\beta)\ni 0$.  Then $\gamma$ has a  smooth lift in $\Sg^\nabla$ with initial condition $\varphi$ via the source map $\sg:\Sg^\nabla\to M$.
\end{re}
Using this remark one can prove:
\begin{pr}\label{globalAff}
 Let $\nabla$ be a connection on a connected manifold $M$ such that $\nabla R^\nabla=\nabla T^\nabla=0$. Suppose  that $\nabla$ is complete. Then the source map $\sg:\Sg^\nabla\to M$ is a covering map. In particular, when $\nabla$ is complete and $M$ is simply connected, any element $\varphi\in \Sg^\nabla$ extends to a unique global $\nabla$-affine  isomorphism $M\to M$. In particular, for any pair $(x,x')\in M\times M$ there exists a unique global $\nabla$-affine  isomorphism mapping $x$ to $x'$.
\end{pr}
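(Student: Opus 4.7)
The plan is to show that the source map $\sg:\Sg^\nabla\to M$ is a covering map; once this is done, the two ``in particular'' statements follow from standard covering space theory combined with Remark~\ref{Trans}. Since $\sg$ is already a local diffeomorphism, the task reduces to producing, around each point $x_0\in M$, a neighborhood $U$ over which $\sg^{-1}(U)$ decomposes as a disjoint union of sheets mapped diffeomorphically onto $U$ by $\sg$.

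First I globalize the geodesic-lifting remark preceding the proposition. Given $\varphi\in\Sg^\nabla$ with $\sg(\varphi)=x$ and $\xi\in T_xM$, completeness of $\nabla$ guarantees that both the $\nabla$-geodesic $\gamma_\xi$ from $x$ with velocity $\xi$ and the $\nabla$-geodesic from $\tg(\varphi)$ with velocity $\varphi_*(\xi)$ are defined on all of $\R$, so that remark provides a smooth lift $\tilde\gamma_\xi:\R\to\Sg^\nabla$ of $\gamma_\xi$ with $\tilde\gamma_\xi(0)=\varphi$, unique because $\sg$ is a local diffeomorphism. Next I fix $x_0\in M$ and a star-shaped open neighborhood $V\subset T_{x_0}M$ of $0$ on which $\exp^\nabla_{x_0}$ is a diffeomorphism onto a normal neighborhood $U\subset M$. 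For each $\varphi\in\sg^{-1}(x_0)$, I define $s_\varphi:U\to\Sg^\nabla$ by $s_\varphi(\exp^\nabla_{x_0}(v)):=\tilde\gamma_v(1)$; smoothness of $s_\varphi$ in $v$ reduces to smooth dependence of integral curves of the distribution ${\cal D}^\nabla$ on initial data. The relations $\sg\circ s_\varphi=\id_U$ and $s_\varphi(x_0)=\varphi$ are built into the construction, so each $s_\varphi(U)$ is an open subset diffeomorphic to $U$ via $\sg$. Disjointness of $s_\varphi(U)$ and $s_{\varphi'}(U)$ for $\varphi\ne\varphi'$ follows from uniqueness of lifts: equality at any $y\in U$ would force the two lifts of the reversed radial geodesic back to $x_0$ to coincide throughout, hence $\varphi=\varphi'$. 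Surjectivity onto $\sg^{-1}(U)$ is obtained symmetrically by lifting, for any $\psi\in\sg^{-1}(U)$ with $\sg(\psi)=\exp^\nabla_{x_0}(v)$, the reversed radial geodesic $t\mapsto\exp^\nabla_{x_0}((1-t)v)$ starting at $\psi$; its endpoint produces the required $\varphi$ with $s_\varphi(\sg(\psi))=\psi$. Thus $\sg$ is a covering.

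Assume now $M$ is simply connected. Then every connected component of $\Sg^\nabla$ projects diffeomorphically onto $M$ under $\sg$; for $\varphi\in\Sg^\nabla$, the inverse $s:M\to\Sg^\nabla$ of this projection on the component containing $\varphi$ is a global smooth section, and $\Phi:=\tg\circ s:M\to M$ is the candidate extension. Locally, $s$ coincides with the tautological section furnished by the germs of any representative of $\varphi$, so $\Phi$ is locally a $\nabla$-affine isomorphism, hence $\nabla$-affine globally; applying the same construction to the inverse germ of $\varphi$ produces an inverse of $\Phi$, confirming that $\Phi$ is a $\nabla$-affine automorphism. Uniqueness of the extension follows because two global $\nabla$-affine isomorphisms sharing a germ at one point must agree along any path, hence everywhere on connected $M$. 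For the final assertion, Remark~\ref{Trans} furnishes for any $(x,x')\in M\times M$ a germ $\varphi\in(\sg,\tg)^{-1}(x,x')$, which the preceding construction globalizes. The main technical delicacy will be smoothness of $s_\varphi$ in $v$, i.e., smooth dependence of the geodesic lift on its initial velocity; this rests on ${\cal D}^\nabla$ being a smooth distribution on $S^\nabla$ together with standard ODE smoothness in parameters.
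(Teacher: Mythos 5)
Your proof is correct and takes essentially the same approach as the paper: both rest on lifting $\nabla$-geodesics through the source map $\sg$, with completeness of $\nabla$ ensuring the lifts exist on the full time interval needed to cover a neighborhood. The only cosmetic difference is that the paper uses a $\nabla$-convex open set $U$ and notes that any germ with source in $U$ extends over all of $U$, whereas you build the sheets over a normal neighborhood by lifting radial geodesics from a fixed center and then verify disjointness and surjectivity separately.
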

\begin{proof}
It suffices to note, that for a $\nabla$-convex open set $U\subset M$ the following holds: any   germ $\varphi\in\Sg^\nabla$ with $\sg(\varphi)\in U$ has an extension on $U$. The point is that, since $\nabla$ is complete, for any geodesic $\gamma:(\alpha,\beta)\to U$ passing through $\sg(\varphi)$, the composition $\varphi\circ \gamma$ can be extended on the whole $(\alpha,\beta)$.   Therefore all  $\nabla$- geodesics in $U$ passing through $\sg(\varphi)$ admit lifts  with initial condition $\varphi$.  Using these lifts it follows that the connected components of $\sg^{-1}(U)$ are identified with $U$ via $\sg$.
\end{proof}

An important special case (which intervenes in the proof of Singer's theorem) concerns a linear connection $\nabla$ satisfying the conditions $\nabla R^\nabla=\nabla T^\nabla=0$ which is a metric connection, i.e. there exists a Riemannian metric $g$ on $M$ such that $\nabla g=0$. In this case one defines submanifolds 
$$\Sg^\nabla_g\coloneqq \{\varphi\in \Sg^\nabla|\ \varphi_* \hbox{ is an isometry}\},\ S^\nabla_g\coloneqq \{f\in S^\nabla|\ f \hbox{ is an isometry}\}$$
of $\Sg^\nabla$, $S^\nabla$ respectively. $\Sg^\nabla_g$ is open in  $\Sg^\nabla$. In other words  $S^\nabla_g$ is a union of integral submanifolds (of maximal dimension) of the involutive distribution ${\cal D}^\nabla$. On the other hand, an important result in Riemannian geometry states \cite[Proposition 1.5]{TV}:
\begin{pr}\label{completeNabla} Let $(M,g)$ be a complete Riemannian manifold. Then any metric connection $\nabla$ on $M$ is complete.
\end{pr}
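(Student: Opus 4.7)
The plan is to use the defining property of a metric connection---parallel transport preserves $g$---to show that $\nabla$-geodesics have constant $g$-speed, and then to exploit the metric completeness of $(M,g)$ (via Hopf-Rinow) to confine the geodesic lift in $TM$ to a compact set, from which an ODE argument extends the geodesic.

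First I would observe that, since $\nabla g = 0$, parallel transport along a curve with respect to $\nabla$ is a $g$-isometry between tangent spaces. Applied to the velocity field of a $\nabla$-geodesic $\gamma : [0,a) \to M$, this gives $|\dot\gamma(t)|_g = c$ for some constant $c \geq 0$ along $\gamma$. In particular, $\gamma$ is $c$-Lipschitz with respect to the Riemannian distance $d_g$, because the $g$-length of $\gamma|_{[s,t]}$ equals $c(t-s)$ and bounds $d_g(\gamma(s),\gamma(t))$.

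Now I would argue by contradiction: suppose $\gamma$ is a maximal $\nabla$-geodesic defined only on $[0,a)$ with $a < \infty$. Then the image $\gamma([0,a))$ is $d_g$-bounded. By Hopf-Rinow applied to the complete Riemannian manifold $(M,g)$, its closure $K \coloneqq \overline{\gamma([0,a))}$ is compact. Consequently the closed $g$-disk bundle of radius $c$ over $K$,
\[
D_c(K) \coloneqq \{\, v \in TM \mid \pi(v) \in K,\ |v|_g \leq c\,\},
\]
is a compact subset of $TM$ containing the curve $t \mapsto (\gamma(t),\dot\gamma(t))$ for all $t \in [0,a)$.

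The final step is to invoke the $\nabla$-geodesic spray $Z^\nabla$ on $TM$: this is a smooth vector field whose integral curves project to $\nabla$-geodesics, and $(\gamma, \dot\gamma)$ is precisely such an integral curve. By the standard flow-box theorem for smooth vector fields on manifolds, for each point $w \in D_c(K)$ there exists an open neighborhood $U_w \ni w$ and a time $\delta_w > 0$ such that the flow of $Z^\nabla$ is defined on $U_w \times (-\delta_w, \delta_w)$. Covering the compact set $D_c(K)$ by finitely many such $U_w$ yields a uniform time $\delta > 0$ such that any integral curve of $Z^\nabla$ starting in $D_c(K)$ exists for time at least $\delta$. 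Applying this to $(\gamma(t_0), \dot\gamma(t_0))$ for any $t_0 \in (a - \delta, a)$ extends $\gamma$ beyond $a$, contradicting maximality. Hence $a = +\infty$, and the symmetric argument at the left endpoint shows $\nabla$ is complete. I expect no serious obstacle here; the only delicate point is the identification of the geodesic lift with an integral curve of a smooth vector field on $TM$, which is standard.
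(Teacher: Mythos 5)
The paper does not prove Proposition \ref{completeNabla}: it states it as a known result with the citation \cite[Proposition 1.5]{TV}, so there is no in-text argument to compare against. Your proof is correct and is the standard one for this fact: from $\nabla g = 0$ you deduce that $\nabla$-geodesics have constant $g$-speed and are therefore Lipschitz with respect to $d_g$; Hopf--Rinow completeness then traps the velocity curve $(\gamma,\dot\gamma)$ in a compact disk bundle in $TM$; and the uniform-flow-time (escape lemma) argument for the geodesic spray forces a maximal $\nabla$-geodesic to be defined on all of $\R$. All the individual steps — constancy of $|\dot\gamma|_g$ via parallel transport, compactness of $D_c(K)$, the identification of $(\gamma,\dot\gamma)$ with an integral curve of the smooth spray $Z^\nabla$, and the finite-cover/uniform-$\delta$ extension — are carried out correctly, and the contradiction with maximality is valid. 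No gaps.
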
 
Using these facts Proposition \ref{globalAff} combined with the proof of Remark \ref{Trans} gives:

\begin{co} \label{globalAffg} Let $(M,g)$ be a connected  Riemannian manifold  endowed with a metric connection $\nabla$ such that  $\nabla R^\nabla=\nabla T^\nabla=0$. Suppose that $(M,g)$ is complete. Then the source map $\sg:{\Sg}^\nabla_g\to M$ is a covering map. In particular, when $(M,g)$ is complete and   simply connected, any element $\varphi\in \Sg^\nabla_g$ extends to a unique global $\nabla$-affine  isometry $M\to M$. In particular, for any pair $(x,x')\in M\times M$ there exists a unique global $\nabla$-affine  isometry mapping $x$ to $x'$.	
\end{co}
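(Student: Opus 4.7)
The plan is to bootstrap from Proposition \ref{globalAff} by showing that the metric condition is preserved under the extension/lifting procedure used there.

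\textbf{Step 1 (Completeness).} Since $(M,g)$ is complete and $\nabla$ is a metric connection, Proposition \ref{completeNabla} gives that $\nabla$ is complete as a linear connection. Proposition \ref{globalAff} then applies: the source map $\sg:\Sg^\nabla\to M$ is a covering map, and any germ in $\Sg^\nabla$ extends uniquely along any continuous path in $M$ starting at its source.

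\textbf{Step 2 (Metric leaves are stable under the lifting).} Recall the distribution ${\cal D}^\nabla$ on $S^\nabla$ along which germs in $\Sg^\nabla$ extend: an integral curve through $f\in S^\nabla$ has the form $t\mapsto b_t\circ f\circ a_t^{-1}$, where $a_t$, $b_t$ are $\nabla$-parallel transports along $\nabla$-geodesics emanating from $\sigma(f)$, $\tau(f)$ respectively. Because $\nabla g=0$, parallel transport is an isometry, so if $f\in S^\nabla_g$ then every $b_t\circ f\circ a_t^{-1}$ is again an isometry. Hence $S^\nabla_g$ is a union of leaves of ${\cal D}^\nabla$, and $\Sg^\nabla_g$ is saturated under the $\sg$-lifting procedure of Step 1. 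The open inclusion $\Sg^\nabla_g\hookrightarrow \Sg^\nabla$ together with the covering property of $\sg$ on $\Sg^\nabla$ now implies, via the $\nabla$-convex neighbourhood argument of Proposition \ref{globalAff}, that $\sg:\Sg^\nabla_g\to M$ is itself a covering map. The main subtlety here is precisely this preservation statement; once it is in place, the topological argument is a direct restriction of the one already proved.

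\textbf{Step 3 (Simply connected case).} If $M$ is simply connected and connected, every connected covering of $M$ is trivial, so each connected component of $\Sg^\nabla_g$ is mapped diffeomorphically onto $M$ by $\sg$. Given any $\varphi\in\Sg^\nabla_g$, the component containing $\varphi$ is the graph of a globally defined $\nabla$-affine isometry $M\to M$ extending $\varphi$; uniqueness follows from the fact that a $\nabla$-affine map is determined by its germ at a point.

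\textbf{Step 4 (Transitivity).} For $(x,x')\in M\times M$, choose a smooth path $\gamma$ from $x$ to $x'$; $\nabla$-parallel transport along $\gamma$ yields a linear isomorphism $f:T_xM\to T_{x'}M$ which is an isometry (because $\nabla g=0$) and satisfies $f(R^\nabla_x)=R^\nabla_{x'}$, $f(T^\nabla_x)=T^\nabla_{x'}$ (because $\nabla R^\nabla=\nabla T^\nabla=0$). By Remark \ref{fphi} there is a unique germ $\varphi_f\in\Sg^\nabla_g$ with $(\varphi_f)_*=f$, and by Step 3 this germ extends to a unique global $\nabla$-affine isometry mapping $x$ to $x'$.
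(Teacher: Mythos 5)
Your proof is correct and follows the same route the paper indicates (which it compresses into a one-line citation of Proposition \ref{globalAff}, Proposition \ref{completeNabla}, and the proof of Remark \ref{Trans}). The one step the paper leaves implicit — that $S^\nabla_g$ is a union of leaves of ${\cal D}^\nabla$ because $\nabla$-parallel transport preserves $g$, so the $\nabla$-convex-neighbourhood argument restricts cleanly to $\Sg^\nabla_g$ — is exactly the observation you supply in Step 2, and it is stated just before the corollary in the text; the rest of your argument matches the paper's intent precisely.
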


Now we come back to the connection $\bar\nabla=\nabla^{h,A}\oplus \nabla^{v,A}$ on $T_P$ associated with a pair $(\nabla,A)$ of a metric connection on $M$ and $A\in{ \cal A} (P)$ which satisfy the conditions
$$\nabla  R^\nabla=0,\ \nabla T^\nabla=0,\ (\nabla\otimes \nabla^A)F^A=0,\  (\nabla\otimes \nabla^A)(A-A_0)=0\,.
$$
We know that $\bar \nabla R^{\bar\nabla}=\bar \nabla T^{\bar\nabla}=0$, so all constructions and results above apply to $\bar\nabla$.  Using the additional structure we have on $P$ (the $K$-action, the two connections $A$, $A_0$,  and the metric $g$ on $P/K$) we will define  an  open submanifold  $\Sg_{g,K}^{\bar\nabla}$ of $\Sg^{\bar\nabla}$ consisting of germs of affine transformations which are compatible with this structure:

Since $\bar\nabla$ is $K$-invariant, the manifolds $\Sg^{\bar\nabla}$, $S^{\bar\nabla}$ come with natural right $K$-actions given by $(\varphi,k)\mapsto R_k\circ\varphi\circ R_k^{-1}$, $(f,k)\mapsto (R_k)_*\circ f \circ (R_k^{-1})_*$, and the  maps $\sg$, $\tg:\Sg^{\bar\nabla}\to P$, $\sigma$, $\tau:S^{\bar\nabla}\to P$ are $K$-equivariant.

We define a submanifold $S^{\bar\nabla}_{K,g}\subset S^{\bar\nabla}$ by
\begin{equation}
\begin{split}
&S^{\bar\nabla}_{g,K}\coloneqq \big\{f\in S^{\bar\nabla}|\ f(A_{\sigma(f)})=A_{\tau(f)}\ ,\, f(A_{0,\sigma(f)})=A_{0,\tau(f)}\,,\\ & \ \ \ f(a^{\#}_{\sigma(f)})=a^\#_{\tau(f)} \forall a\in\kg\,,  f\hbox{ induces an isometry } T_{p(\sigma(f))}M\to T_{p(\tau(f))}M\big\}\,.	
\end{split}
\end{equation}

\begin{lm}\label{OpenLemma}
$S^{\bar\nabla}_{g,K}$ is $K$-invariant, and is a union of integral submanifolds of the distribution 	${\cal D}^{\bar \nabla}$. In particular $\Sg_{g,K}^{\bar\nabla}\coloneqq \delta^{-1}(S^{\bar\nabla}_{g,K})$ is a $K$-invariant open submanifold of $\Sg^{\bar\nabla}$.
\end{lm}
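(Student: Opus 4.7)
The plan is to verify each defining condition of $S^{\bar\nabla}_{g,K}$ separately. The $K$-invariance will follow from a short direct computation: for $k\in K$ and $f\in S^{\bar\nabla}_{g,K}$, the transformed element $k\cdot f=(R_k)_*\circ f\circ(R_{k^{-1}})_*$ preserves $A$ and $A_0$ (which are $K$-invariant), induces the same base map as $f$ (since $p\circ R_k=p$), and satisfies $(k\cdot f)(a^\#)=a^\#$ by the identity $(R_k)_*(a^\#)=(\Ad_{k^{-1}}a)^\#$, where the $\Ad_k$ factor produced by $(R_{k^{-1}})_*$ exactly cancels the $\Ad_{k^{-1}}$ factor of $(R_k)_*$.

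For the integral submanifold statement, I fix $f\in S^{\bar\nabla}_{g,K}$, a $\bar\nabla$-geodesic $\gamma$ in $P$ with $\gamma(0)=\sigma(f)$, and the corresponding $\bar\nabla$-geodesic $\eta$ with $\eta(0)=\tau(f)$, $\dot\eta(0)=f(\dot\gamma(0))$, and show that $f_t\coloneqq\tau^\eta_t\circ f\circ(\tau^\gamma_t)^{-1}$ remains in $S^{\bar\nabla}_{g,K}$. Preservation of the distributions $A$ and $A_0$ is immediate from Lemma \ref{AA0parallel} since they are $\bar\nabla$-parallel. For the base isometry condition, the decomposition $\bar\nabla=\nabla^{h,A}\oplus\nabla^{v,A}$ together with $\nabla^{h,A}=J(p^*\nabla)$ implies that $\bar\nabla$-parallel transport of $A$-vectors along $\gamma$ corresponds under $J$ to $\nabla$-parallel transport along $p\circ\gamma$ in $M$; since $\nabla$ is metric, the induced base map of $f_t$ remains an isometry.

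The hard part, and the core of the argument, will be condition $f_t(a^\#_{\gamma(t)})=a^\#_{\eta(t)}$, because $a^\#$ is not $\bar\nabla$-parallel in general (one has $\bar\nabla_{b^\#}a^\#=[b,a]^\#$), so an abstract parallel-transport preservation is unavailable. Instead I will compute parallel transport of $a^\#$ along an arbitrary $\bar\nabla$-geodesic explicitly. Decomposing $\dot\gamma(t)=\widetilde{X(t)}+v(t)^\#$ with $X(t)\in T_{p(\gamma(t))}M$ and $v(t)\in\kg$, Proposition \ref{nablabarPr} yields $\frac{\bar\nabla}{dt}\dot\gamma=(\nabla_{d(p\circ\gamma)/dt}X)^\sim+\dot v(t)^\#$, so the geodesic condition forces $v(t)\equiv v(0)$. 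Then parallel transport of $a^\#_{\gamma(0)}$ reduces to the linear ODE $\dot h+[v(0),h]=0$ in $\kg$, whose solution is $h(t)=\Ad_{\exp(-tv(0))}(a)$; hence $\tau^\gamma_t(a^\#_{\gamma(0)})=(\Ad_{\exp(-tv(0))}a)^\#_{\gamma(t)}$. The same computation along $\eta$ produces the identical formula, because the vertical component of $\dot\eta(0)=f(\dot\gamma(0))$ equals $v(0)$ (as $f$ satisfies the fundamental-field condition at the initial point and therefore acts as the identity on $\kg$ under the canonical trivialization of $V_P$). The two $\Ad$-factors therefore cancel in the composition $\tau^\eta_t\circ f\circ(\tau^\gamma_t)^{-1}$, yielding $f_t(a^\#_{\gamma(t)})=a^\#_{\eta(t)}$. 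This proves that ${\cal D}^{\bar\nabla}_f\subset T_f S^{\bar\nabla}_{g,K}$, so the maximal integral leaf of ${\cal D}^{\bar\nabla}$ through $f$ lies in $S^{\bar\nabla}_{g,K}$; the openness of $\Sg^{\bar\nabla}_{g,K}=\delta^{-1}(S^{\bar\nabla}_{g,K})$ in $\Sg^{\bar\nabla}$ is then automatic, since by construction the topology of $\Sg^{\bar\nabla}$ is precisely the leaf topology of the foliation induced by ${\cal D}^{\bar\nabla}$.
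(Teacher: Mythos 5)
Your proof is correct, but the route through the "hard" vertical case differs from the paper's and is worth comparing. The paper decomposes an arbitrary $\xi\in T_{\sigma(f)}P$ as $a^\#_{\sigma(f)}+\tilde\zeta_{\sigma(f)}$ and, by linearity of $\lambda_f$, treats the two directions separately: for $\xi=\tilde\zeta$ it takes $\gamma$ to be the $A$-horizontal lift of a $\nabla$-geodesic and uses $\bar\nabla_{\tilde X}a^\#=0$, so no $\mathrm{Ad}$-twist appears at all; for $\xi=a^\#$ it takes $\gamma(t)=\sigma(f)\exp(ta)$, and the resulting $f_t$ is nothing but $\exp(ta)\cdot f$ (since on such a fiber curve $\bar\nabla$-parallel transport agrees with $(R_{\exp(ta)})_*$ both on $A$, which is $K$-invariant, and on $V_P$, where both equal the same $\mathrm{Ad}_{\exp(-ta)}$ twist), so the vertical case is disposed of instantly by the $K$-invariance you proved in the first paragraph. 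You instead work with a single arbitrary $\bar\nabla$-geodesic, note that its vertical component $v(t)$ is forced to be constant, integrate the resulting linear ODE to get $\tau^\gamma_t(a^\#)=(\mathrm{Ad}_{\exp(-tv(0))}a)^\#$, and then cancel the two $\mathrm{Ad}$-factors by hand after checking $w(0)=v(0)$. Both are correct: yours is more uniform and self-contained (one computation covers both components and exhibits the actual curve $f_t$ in $S^{\bar\nabla}_{g,K}$, not just the tangent), while the paper's is slicker, reusing $K$-invariance to avoid solving any ODE and reducing the horizontal case to the already-listed fact $\bar\nabla_{\tilde X}a^\#=0$ from Proposition \ref{nablabarPr}.

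One small point of care that your write-up implicitly gets right but is worth flagging: the curve $\eta$ you use is the $\bar\nabla$-geodesic with initial data $(\tau(f),f(\dot\gamma(0)))$, not $\varphi\circ\gamma$ for $\varphi=\delta^{-1}(f)$ as in the paper's definition of $\lambda_f$; these agree precisely because $\varphi$ is $\bar\nabla$-affine, so $\varphi\circ\gamma$ is the $\bar\nabla$-geodesic with that initial condition — the paper records this remark just after defining ${\cal D}^\nabla$, and it is what justifies identifying your $f_t$ with the integral curve of ${\cal D}^{\bar\nabla}$ through $f$.
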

\begin{proof}
We have to prove that for any $f\in 	S^{\bar\nabla}_{g,K}$ one has ${\cal D}_f\subset T_fS^{\bar\nabla}_{g,K}$, i.e. that for any $\xi\in T_{\sigma(f)}P$ we have $\lambda_f(\xi)\in T_fS^{\bar\nabla}_{g,K}$. It suffices to prove that for any $a\in\kg$ and any $\zeta \in T_{p(\sigma(f))}M$ one has (denoting by $\tilde\zeta_{\sigma(f)}$ the $A$-horizontal lift of $\zeta$ at $\sigma(f)$):
$$\lambda_f(a^\#_{\sigma(f)})\in T_fS^{\bar\nabla}_{g,K}\,,\ \lambda_f(\tilde \zeta_{\sigma(f)})\in T_fS^{\bar\nabla}_{g,K}\,.
$$
The first formula is obtained using the curve $t\mapsto\sigma(f)\exp(ta)$, and the second is obtained using the curve $t\mapsto \tilde\eta_{\sigma(f)}(t) $, where $\eta:(-\varepsilon,\varepsilon)\to M$ is a $\nabla$-geodesic such that $\eta(0)=p(\sigma(f))$, $\dot\eta(0)=\zeta$. One uses the fact that the vector fields $a^\#$ are $\bar\nabla$-parallel along $A$-horizontal curves, and that the distributions $A$, $A_0$ are $\bar\nabla$-parallel. 
\end{proof}

\begin{lm}\label{SurjPairs}
The restrictions, 
$$\resto{(p\circ\sigma,p\circ\tau)}{S^{\bar\nabla}_{g,K}}:S^{\bar\nabla}_{g,K}\to M\times M\,,\ \resto{(p\circ\sg,p\circ\tg)}{\Sg^{\bar\nabla}_{g,K}}:\Sg^{\bar\nabla}_{g,K}\to M\times M$$
are  surjective.	
\end{lm}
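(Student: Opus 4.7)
The plan is to construct, for each pair $(x_1,x_2)\in M\times M$, an element of $S^{\bar\nabla}_{g,K}$ projecting to it, by parallel transport with respect to $\bar\nabla$ along a suitably chosen path in $P$; the surjectivity onto $\Sg^{\bar\nabla}_{g,K}$ then follows from $\delta^{-1}(S^{\bar\nabla}_{g,K})=\Sg^{\bar\nabla}_{g,K}$ together with Remark \ref{fphi}.

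First I would fix a pair $(x_1,x_2)\in M\times M$, pick any smooth path $\gamma:[0,1]\to M$ from $x_1$ to $x_2$ (using that $M$ is connected), choose an arbitrary point $y_1\in P_{x_1}$, and let $\tilde\gamma^A:[0,1]\to P$ denote the $A$-horizontal lift of $\gamma$ starting at $y_1$; set $y_2\coloneqq \tilde\gamma^A(1)\in P_{x_2}$. Parallel transport along $\tilde\gamma^A$ with respect to $\bar\nabla$ then yields a linear isomorphism $f:T_{y_1}P\to T_{y_2}P$. Since $\bar\nabla R^{\bar\nabla}=\bar\nabla T^{\bar\nabla}=0$ by Theorem \ref{ConnectionsCA}(1), $f$ preserves the curvature and torsion tensors, so $f\in S^{\bar\nabla}$ with $\sigma(f)=y_1$, $\tau(f)=y_2$.

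Next I would verify that $f$ satisfies the additional conditions defining $S^{\bar\nabla}_{g,K}$. By Theorem \ref{ConnectionsCA}(3), the distributions $A$ and $A_0$ are $\bar\nabla$-parallel, so $f(A_{y_1})=A_{y_2}$ and $f(A_{0,y_1})=A_{0,y_2}$. By Theorem \ref{ConnectionsCA}(2), the fundamental vertical vector fields $a^\#$ are $\bar\nabla$-parallel along $A$-horizontal curves, hence $f(a^\#_{y_1})=a^\#_{y_2}$ for every $a\in\kg$. Finally, via the isomorphism $J:p^*T_M\to A$ used to define $\nabla^{h,A}$, the restriction $\resto{f}{A_{y_1}}$ is identified with parallel transport along $\gamma$ with respect to $\nabla$; since $\nabla$ is metric, this parallel transport is an isometry $T_{x_1}M\to T_{x_2}M$, and this is precisely the induced map $f$ defines on the base tangent spaces. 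Therefore $f\in S^{\bar\nabla}_{g,K}$, proving the surjectivity of $\resto{(p\circ\sigma,p\circ\tau)}{S^{\bar\nabla}_{g,K}}$.

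For the second map, I apply Remark \ref{fphi} to obtain the unique germ $\varphi_f\in \Sg^{\bar\nabla}$ with $(\varphi_f)_*=f$; since $f\in S^{\bar\nabla}_{g,K}$, by definition $\varphi_f\in \delta^{-1}(S^{\bar\nabla}_{g,K})=\Sg^{\bar\nabla}_{g,K}$, and $(p\circ\sg(\varphi_f),p\circ\tg(\varphi_f))=(x_1,x_2)$. The main (and essentially only) step requiring care is the verification that parallel transport with respect to $\bar\nabla$ along an $A$-horizontal curve preserves all the listed structures; this is entirely supplied by the three parallelism statements in Theorem \ref{ConnectionsCA}, so no further obstacle arises.
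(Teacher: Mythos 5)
Your proof is correct and follows essentially the same route as the paper: pick a path in $M$, take its $A$-horizontal lift, use $\bar\nabla$-parallel transport along it to produce an element $f$ of $S^{\bar\nabla}$, and invoke the three properties of $\bar\nabla$ from Theorem \ref{ConnectionsCA} to conclude $f\in S^{\bar\nabla}_{g,K}$. You simply spell out more explicitly the verification that the paper summarizes as ``Using Theorem \ref{ConnectionsCA}, we see that $f\in S^{\bar\nabla}_{g,K}$,'' and you also make explicit the transfer to $\Sg^{\bar\nabla}_{g,K}$ via $\delta$ and Remark \ref{fphi}.
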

\begin{proof}

Let  $x_0$, $x_1\in M$, and let $\eta:[0,1]\to M$ be a smooth path in $M$ such that $\eta(0)=x_0$, $\eta(1)=x_1$. Choose a point $y_0\in P_{x_0}$,  let $\tilde\eta$ be the $A$-horizontal lift of $\eta$ with the initial condition $\tilde\eta(0)=y_0$, and let $y_1\coloneqq \tilde\eta(1)$. Using $\bar\nabla$-parallel transport along $\tilde\eta$, we obtain an element $f\in S^{\bar\nabla}$ with $\sigma(f)=y_0$, $\tau(f)=y_1$.  Using Theorem \ref{ConnectionsCA}, we see  that  $f\in S^{\bar\nabla}_{g,K}$
\end{proof}

\begin{thry}\label{equivTh}
Let $(M,g)$ be a connected Riemannian manifold, $(g,P\textmap{p} M,A_0)$  be a triple consisting of a   principal $K$-bundle $P$ on $M$, and a connection $A_0$ on $P$. The following conditions are equivalent:
\begin{enumerate}
\item 	$(g,P\textmap{p} M,A_0)$  is locally homogeneous.
\item $(g,P\textmap{p} M,A_0)$  is infinitesimally homogeneous.
\item There exists a pair $(\nabla,A)$ consisting of a metric connection on $(M,g)$ and a connection $A\in {\cal A}(P)$ such that
\begin{equation}\label{ConditionsNablaA} \nabla  R^\nabla=0,\ \nabla T^\nabla=0,\ (\nabla\otimes \nabla^A)F^A=0,\  (\nabla\otimes \nabla^A)(A-A_0)=0\,.
\end{equation}
\end{enumerate}
 
\end{thry}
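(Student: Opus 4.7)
The plan is to establish the cycle (1)$\Rightarrow$(2)$\Rightarrow$(3)$\Rightarrow$(1). Implication (2)$\Rightarrow$(3) is exactly the content of Theorem \ref{ConnectionsCA}, which already produces a pair $(\nabla,A)$ satisfying (\ref{ConditionsNablaA}) out of infinitesimal homogeneity. For (1)$\Rightarrow$(2), given $(x_1,x_2)\in M\times M$ together with a local isometry $\varphi:U_1\to U_2$ sending $x_1$ to $x_2$ and a compatible lift $\Phi:P_{U_1}\to P_{U_2}$ satisfying $\Phi^*(A_0|_{U_2})=A_0|_{U_1}$, I would take $f\coloneqq d_{x_1}\varphi$ and $\phi\coloneqq \Phi|_{P_{x_1}}$ and observe that these intertwine $\nabla^g$ with itself and $\nabla^{A_0}$ with itself on a neighborhood of $x_1$, so all iterated covariant derivatives of $R^g$ and $F^{A_0}$ are transported as required by Remark \ref{IHTDef2}.

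The substantive implication is (3)$\Rightarrow$(1). Assuming (3), the plan is to form the associated linear connection $\bar\nabla\coloneqq \nabla^{h,A}\oplus\nabla^{v,A}$ on $T_P$ and to invoke Theorem \ref{ConnectionsCA}: one has $\bar\nabla R^{\bar\nabla}=\bar\nabla T^{\bar\nabla}=0$, the fundamental fields $a^\#$ are $\bar\nabla$-parallel along $A$-horizontal curves, and the distributions $A$ and $A_0$ are $\bar\nabla$-parallel. The next step is to apply the machinery of germs of $\bar\nabla$-affine isomorphisms: Lemma \ref{OpenLemma} provides the $K$-invariant open submanifold $\Sg^{\bar\nabla}_{g,K}$ of germs that preserve the $K$-action, the two connections, and the base metric, and Lemma \ref{SurjPairs} guarantees that $(p\circ\sg,p\circ\tg):\Sg^{\bar\nabla}_{g,K}\to M\times M$ is surjective. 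Hence for any $(x_1,x_2)\in M\times M$ one can pick $\varphi\in\Sg^{\bar\nabla}_{g,K}$ projecting to $(x_1,x_2)$; a representative of $\varphi$ is then a local $\bar\nabla$-affine diffeomorphism between open neighborhoods of $y_1\coloneqq \sg(\varphi)$ and $y_2\coloneqq \tg(\varphi)$ which, by the very definition of $S^{\bar\nabla}_{g,K}$, preserves fundamental fields, horizontal subspaces for $A$ and $A_0$, and descends to a local isometry of $(M,g)$.

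The final step is to upgrade this germ to a genuine $K$-equivariant bundle isomorphism $\Phi:P_{U_1}\to P_{U_2}$ covering a local isometry $\bar\varphi:U_1\to U_2$ with $\bar\varphi(x_1)=x_2$ and satisfying $\Phi^*(A_0|_{U_2})=A_0|_{U_1}$. For this, for each $k\in K$ I would consider the conjugate germ $R_k\circ\varphi\circ R_k^{-1}$, which again lies in $\Sg^{\bar\nabla}_{g,K}$ by the $K$-invariance of $\bar\nabla$ and of the defining conditions of $S^{\bar\nabla}_{g,K}$. The uniqueness clause in Remark \ref{fphi} then identifies this translate with the germ whose initial differential is $(R_k)_*\circ\varphi_*\circ(R_k^{-1})_*$, which forces the prescription $\Phi(y_1 k)\coloneqq \varphi(y_1)k$ to extend $\varphi$ coherently across the $K$-orbits of a suitable shrinking of its domain. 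The resulting $\Phi$ is a $K$-equivariant bundle isomorphism, it covers a local isometry $\bar\varphi:U_1\to U_2$ with $\bar\varphi(x_1)=x_2$, and preservation of $A_0$ by $\varphi$ yields $\Phi^*(A_0|_{U_2})=A_0|_{U_1}$, establishing local homogeneity.

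The main obstacle is precisely this last extension step: the germ $\varphi\in\Sg^{\bar\nabla}_{g,K}$ is a priori defined only on a small neighborhood of $y_1$ in $P$, which need not be $K$-saturated, and turning it into a bundle map over an open set $U_1\subset M$ requires careful interplay between the $K$-invariance of $\bar\nabla$, the $K$-equivariance of the conditions cutting out $S^{\bar\nabla}_{g,K}$, and the rigidity of $\bar\nabla$-affine isomorphisms afforded by Remark \ref{fphi}. Once this is handled, the remaining verifications reduce to reading off properties built into the definition of $S^{\bar\nabla}_{g,K}$.
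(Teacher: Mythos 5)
Your overall architecture matches the paper's: the cycle (1)$\Rightarrow$(2)$\Rightarrow$(3)$\Rightarrow$(1), with (2)$\Rightarrow$(3) delegated to Theorem \ref{nablaCA-FA-nabla-alpha} (or Theorem \ref{ConnectionsCA}), and (3)$\Rightarrow$(1) running through the auxiliary connection $\bar\nabla$ on $T_P$ and the germ space $\Sg^{\bar\nabla}_{g,K}$, using Lemma \ref{SurjPairs} to land a germ over any prescribed pair $(x_1,x_2)$. Your treatment of (1)$\Rightarrow$(2) is a fine elaboration of what the paper dismisses as obvious.

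The only place where you diverge from the paper is exactly the step you flag as ``the main obstacle'': passing from a single germ $\varphi\in\Sg^{\bar\nabla}_{g,K}$ with $\sg(\varphi)\in P_{x_1}$, $\tg(\varphi)\in P_{x_2}$, to a $K$-equivariant bundle isomorphism over open sets $U_1\ni x_1$, $U_2\ni x_2$. Your proposal is to play off the $K$-action $\varphi\mapsto R_k\circ\varphi\circ R_{k^{-1}}$ against the uniqueness clause of Remark \ref{fphi}, and this is aligned in spirit with what actually makes the paper's argument work (the orbit $\varphi K$ is a submanifold of $\Sg^{\bar\nabla}_{g,K}$ mapped diffeomorphically onto $\sg(\varphi)K$ and onto $\tg(\varphi)K$). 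But uniqueness of germs at individual points does not by itself produce a single representative defined on a $K$-saturated open set: you still have to glue the conjugate germs along the whole orbit $\sg(\varphi)K$ and exhibit a uniform domain, and this is precisely where your sketch stops.

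The paper closes this gap by a sheaf-theoretic argument: $\sg,\tg:\Sg^{\bar\nabla}_{g,K}\to P$ are local diffeomorphisms, hence étale spaces, and the orbit $\varphi K$ is a section of the étale space over the compact subset $\sg(\varphi)K$. By Godement's extension theorem for sections of sheaves over paracompact spaces, this section extends to an open neighbourhood $\Ug$ of $\varphi K$ which $\sg$ and $\tg$ map injectively to open neighbourhoods of $\sg(\varphi)K$, $\tg(\varphi)K$ in $P$. Compactness of $K$ then lets one shrink $\Ug$ to a $K$-invariant neighbourhood, which forces its images to be $K$-saturated, i.e.\ of the form $p^{-1}(U_1)$, $p^{-1}(U_2)$. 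The resulting map is then automatically $K$-equivariant, preserves $A_0$, and descends to a local isometry $U_1\to U_2$ — each of these properties being read off from the cutting-out conditions of $S^{\bar\nabla}_{g,K}$, exactly as you indicate. In short: your approach is correct and essentially the paper's, modulo the missing ingredient which is Godement's section-extension theorem for the étale space $\sg:\Sg^{\bar\nabla}_{g,K}\to P$ together with compactness of $K$ for saturation; your proposed uniqueness-plus-conjugation scheme points in the right direction but does not by itself yield a domain of the required form.
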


\begin{proof}  The implication (1)$\Rightarrow$(2) is obvious, and the implication (2)$\Rightarrow$(3) is stated by Theorem  \ref{nablaCA-FA-nabla-alpha}. For the implication (3)$\Rightarrow$(1), let $\bar\nabla=\nabla^{h,A}\oplus \nabla^{v,A}$ be the connection  on $T_P$ associated with a pair $(\nabla,A)$. Let $(x_0,x_1)\in M\times M$. By Lemma \ref{SurjPairs}  there exists $\varphi\in \Sg^{\bar\nabla}_{g,K}$ such that $y_0\coloneqq \sg(\varphi)\in P_{x_0}$,  $y_1\coloneqq \tg(\varphi)\in P_{x_1}$.  The orbit $\varphi K$ is a submanifold of $\Sg^{\bar\nabla}_{g,K}$ which is mapped diffeomorphically onto $y_0 K$ via $\sg$, and onto $y_1K$ via $\tg$. Regarding the maps $\sg$, $\tg: \Sg^{\bar\nabla}_{g,K}\to P$ as sheaves of sets over $P$, and using \cite[Théorème 3.3.1, p. 150]{God} we obtain an open neighbourhood $\Ug\subset \Sg^{\bar\nabla}_{g,K}$ of $\varphi K$ in  $\Sg^{\bar\nabla}_{g,K}$ which is  mapped injectively onto an open neighborhood ${\cal U}$  of $y_0 K$ via $\sg$, and is  mapped injectively onto an open neighborhood ${\cal V}$ of $y_1 K$ via $\tg$. Since $K$ is compact, we may suppose that $\Ug$ is $K$-invariant. Therefore   ${\cal U}$ and ${\cal V}$ will be also $K$-invariant, so  ${\cal U}=p^{-1}(U)$,   ${\cal V}=p^{-1}(V)$ for open neighborhoods $U$, $V$ of $x_0$, $x_1$ respectively. $\Ug$ defines a $K$-equivariant, $\bar\nabla$-affine isomorphism ${\cal U}\to {\cal V}$ which maps $A_{0U}$ onto  $A_{0V}$ and induces an isometry $ U \to V$. This shows that $(g,P\textmap{p} M,A_0)$ is locally homogeneous.
\end{proof}

For the symmetric case we have:
\begin{thry}
Let $(M,g)$ be a connected Riemannian manifold, $(g,P\textmap{p} M,A_0)$  be a triple consisting of a   principal $K$-bundle $P$ on $M$, and a connection $A_0$ on $P$.   The following conditions are equivalent:
\begin{enumerate}
\item 	$(g,P\textmap{p} M,A_0)$  is locally symmetric.
\item  One has
$$\nabla^{g}R^{g}=0,  (\nabla^{g}\otimes \nabla^{A_0}) F^{A_0}=0\,.
$$
\end{enumerate}
	
\end{thry}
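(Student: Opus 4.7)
The plan is to prove the two implications separately. For $(1) \Rightarrow (2)$, I will run a Cartan-style parity computation. Fix $x \in M$ and let $(s_x, \Phi_x)$ be the symmetry pair at $x$. Since $s_x$ is a local isometry fixing $x$, we have $s_x^*(\nabla^g R^g) = \nabla^g R^g$; evaluating at $x$ with $d_x s_x = -\id$ multiplies the degree-$5$ covariant tensor $(\nabla^g R^g)|_x$ by $(-1)^5 = -1$, forcing it to vanish. For the second identity, $\Phi_x$ preserves $A_0$ and hence $F^{A_0}$ and $(\nabla^g\otimes\nabla^{A_0})F^{A_0}$. Crucially, $\Phi_x$ fixes every point of $P_x$, so it induces the identity on $\ad(P)|_x$, whereas $d_x s_x = -\id$ acts on $T_x M$; the tensor $(\nabla^g\otimes\nabla^{A_0})F^{A_0}|_x \in \Lambda^1_x \otimes \Lambda^2_x \otimes \ad(P)|_x$ is thus fixed by a transformation of sign $(-1)\cdot(+1)\cdot(+1) = -1$ and must vanish.

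For $(2) \Rightarrow (1)$, I will apply Theorem \ref{ConnectionsCA} with the natural choice $\nabla = \nabla^g$ (Levi-Civita) and $A = A_0$. With this choice, conditions \eqref{ConditionsNablaA} reduce exactly to the two given hypotheses together with the trivial identities $\nabla^g T^{\nabla^g} = 0$ and $(\nabla^g\otimes\nabla^{A_0})(A_0 - A_0) = 0$. Consequently, Theorem \ref{ConnectionsCA} produces the linear connection $\bar\nabla = \nabla^{h,A_0}\oplus\nabla^{v,A_0}$ on $T_P$ satisfying $\bar\nabla R^{\bar\nabla} = \bar\nabla T^{\bar\nabla} = 0$, with the horizontal distribution $A_0$ being $\bar\nabla$-parallel and every fundamental field $a^\#$ parallel along $A_0$-horizontal curves. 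Cartan's theorem (Theorem \ref{Cartan}), applied via $\nabla^g R^g = 0$, already supplies the base local isometry $s_x$.

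To construct the bundle lift $\Phi_x$, fix $y_0 \in P_x$ and define $f \in \End(T_{y_0} P)$ by $f|_{A_{0,y_0}} = -\id$ and $f|_{V_{y_0}} = \id$, using the splitting $T_{y_0}P = A_{0,y_0}\oplus V_{y_0}$. I claim $f$ lies in $S^{\bar\nabla}_{g,K}$ over $(y_0,y_0)$. The structural conditions in the definition of $S^{\bar\nabla}_{g,K}$ --- preservation of the distributions $A = A_0$, preservation of the fundamental vectors $a^\#_{y_0}$, and induction of the linear isometry $-\id$ on $T_x M$ --- are immediate from the block-diagonal form of $f$. The nontrivial point is that $f$ preserves $\bar T_{y_0}$ and $\bar R_{y_0}$: by formulas \eqref{TorsionNablaBar1}, \eqref{TorsionNablaBar2}, \eqref{CurvatureNablaBar}, the torsion $\bar T$ is vertical-valued on every input type (using $T^{\nabla^g} = 0$), while $\bar R$ sends all-horizontal inputs to a horizontal vector and inputs involving any vertical vector to a vertical vector (or $0$). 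A short casework check shows the signs cancel in every case: for instance $\bar R(-\tilde X, -\tilde Y)(-\tilde Z) = -\bar R(\tilde X, \tilde Y)\tilde Z$ matches $f$ sending the horizontal output to its negative, while on mixed inputs both sides are vertical and preserved pointwise by $f$.

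Once $f \in S^{\bar\nabla}_{g,K}$, Remark \ref{fphi} applied to $\bar\nabla$ yields a unique germ $\varphi_f \in \Sg^{\bar\nabla}$ with $(\varphi_f)_* = f$, and Lemma \ref{OpenLemma} places $\varphi_f$ in $\Sg^{\bar\nabla}_{g,K}$. The Godement sheaf extension argument from the proof of Theorem \ref{equivTh} then extends $\varphi_f$ to a $K$-equivariant, $A_0$-preserving bundle automorphism $\Phi_x : P_U \to P_U$ on a $K$-invariant neighborhood of $P_x$, fixing $y_0$ and covering a local isometry $s_x : U \to U$ with $s_x(x) = x$ and $d_x s_x = -\id$. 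By $K$-equivariance, $\Phi_x$ fixes every point of $P_x$, yielding the required symmetry pair. The main technical obstacle is the parity casework verifying that $f$ preserves $\bar T_{y_0}$ and $\bar R_{y_0}$, but this reduces to a clean bookkeeping of signs coming from the horizontal/vertical structure of $\bar\nabla$.
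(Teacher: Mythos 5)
Your proposal reproduces the paper's argument almost step for step: the same odd-degree parity computation for $(1)\Rightarrow(2)$ (using that $\Phi_x$ fixes $P_x$ pointwise, hence acts trivially on $\ad(P)_x$), and for $(2)\Rightarrow(1)$ the same construction of $\bar\nabla$ from the pair $(\nabla^g,A_0)$ via Lemma~\ref{AA0parallel} and Theorem~\ref{nablabarIsSinger}, the same block endomorphism $f=(-\id)\oplus\id$ with respect to the splitting $T_{y_0}P=A_{0,y_0}\oplus V_{y_0}$, the same verification via \eqref{TorsionNablaBar1}, \eqref{TorsionNablaBar2}, \eqref{CurvatureNablaBar} and $T^{\nabla^g}=0$ that $f\in S^{\bar\nabla}_{g,K}$, and the same Godement extension of the resulting germ. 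The one detail you pass over: as in the proof of Theorem~\ref{equivTh}, the Godement argument produces a $K$-equivariant affine isomorphism $p^{-1}(U)\to p^{-1}(V)$ for two (a priori distinct) neighborhoods $U,V$ of $x$, whereas Definition~\ref{DefSymLH} demands $s_x\colon U\to U$ and $\Phi_x\colon P_U\to P_U$ with matching domain and codomain. The paper closes this by restricting to a small normal geodesic ball $W\ni x$: since $s_{x*x}=-\id$, on $W$ one has $s_x=\exp_x\circ(-\id)\circ\exp_x^{-1}$, so $s_x(W)=W$ and hence $\Phi_x(P_W)=P_W$ (the paper even verifies $\Phi_x$ is an involution there via the horizontal-lift description of $\Phi_x$). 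Adding that shrink-to-a-normal-ball step would make your argument complete; otherwise it is correct and coincides with the paper's.
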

\begin{proof}

Suppose that (1) holds, and, for a point $x\in M$, let $U\coloneqq U_x$, $s\coloneqq s_x$, $\Phi\coloneqq \Phi_x$ be as in Definition \ref{DefSymLH}. Taking into account that $\nabla^g$  is $s$-invariant, and $A_0$ is $\Phi$-invariant,   it follows that $\nabla^{g}R^{g}$, $(\nabla^{g}\otimes \nabla^{A_0}) F^{A_0}$ are invariant sections of $(\Lambda^1_U)^{\otimes 5}$,    $(\Lambda^1_U)^{\otimes 3}\otimes \ad(P_U)$ which are invariant with respect to the involution defined by $s$, respectively by the pair $(s,\Phi)$. Since the degree of these sections with respect to $M$ are odd, the argument of \cite[Section XI.1, Theorem 1.1]{KN2} applies.\\

Conversely, suppose (2) holds. Note that the pair $(\nabla^g,A_0)$ satisfies the conditions in Lemma \ref{AA0parallel}. Using the same construction as in the proof  of Theorem \ref{equivTh}, we obtain a linear connection $\bar\nabla$ on $P$ which satisfies the conditions $\bar \nabla R^{\bar\nabla}=\bar \nabla T^{\bar\nabla}=0$ and leaves the horizontal distribution of $A_0$ invariant (see Theorem \ref{ConnectionsCA}).

Let $x\in M$. For any point $y\in P_x$ put  
$$f_y\coloneqq \begin{pmatrix}
-\id_{(A_0)_y} & 0\\
0 &\id_{T_y(P_x)}	
\end{pmatrix}\in \GL(T_yP),$$
where 	$(A_0)_y\subset T_yP $ stands for the $A_0$-horizontal space at $y$. Using formulae  (\ref{TorsionNablaBar1}), (\ref{TorsionNablaBar2}), (\ref{CurvatureNablaBar}) it  is easy to check  that for any $y\in P_x$ one has $f_y\in S^{\bar\nabla}_{g,K}$.  Moreover, it is easy to see that $F_x\coloneqq \{f_y|\ y\in P_x\}$ is a $K$-orbit with respect to the $K$-action on  $S^{\bar\nabla}_{g,K}$. Using the bijection $\Sg^{\bar\nabla}_{g,K}\to S^{\bar\nabla}_{g,K}$ we obtain a $K$-orbit $\Fg_x\coloneqq \{\varphi_y|\ y\in P_x\}$ for the $K$-action on $\Sg^{\bar\nabla}_{g,K}$ which is mapped diffeomorphically onto $P_x$ via the maps $\sg$ and $\tg$. The same arguments as in the proof of Theorem \ref{equivTh} apply, and give a bundle isomorphism $\Phi: p^{-1}(U)\to  p^{-1}(V)$, where $U$, $V$ are open neighborhoods of $x$, and the induced map $s:U\to V$ is an isometry.   Moreover, one has $\Phi_{*y}=f_y$ for any $y\in P_x$, in particular $s_{*x}=-\id_{T_xM}$. Therefore $s$ induces an isometric involution on a sufficiently small normal neighborhood  $W\subset M$ of $x$. It suffices to note that the restriction of $\Phi$ to $p^{-1}(W)$ is an involution. This is obvious taking into account  the  next remark which gives an explicit geometric construction of $\Phi$: 

\begin{re} Let $\gamma:[0,1]\to W$ be a smooth path with $\gamma(0)=x$, $\gamma'\coloneqq s\circ\gamma$, let $y\in P$, and let $\tilde\gamma$, $\tilde\gamma'$ be the $A_0$-horizontal lifts with initial conditions $\tilde\gamma(0)=\tilde\gamma'(0)=y$. Then $\Phi(\tilde\gamma(1))= \tilde\gamma'(1) $, and $\Phi(\tilde\gamma'(1))= \tilde\gamma(1)$.
\end{re} 	
 
 The remark is proved using that $\Phi$ lifts $s$ and leaves the connection $A_0$ invariant.

\end{proof}

For the case when $(M,g)$ is complete, we have

\begin{thry} \label{CompleteBase} Let $(g,P\textmap{p} M,A_0)$ be a  locally homogeneous triple with $M$ connected.
If $(M,g)$ is complete, then the map $\Sg^{\bar\nabla}_{g,K}/K\to M$ induced by $\sg$  is a covering map. If $(M,g)$ is complete  and $M$ is simply connected, then  any  germ $\varphi\in \Sg^{\bar\nabla}_{g,K}$ can be extended to a unique bundle isomorphism  $\Phi:P\to P$ which covers an isometry $M\to M$,  and has the property $\Phi_*(A_0)=A_0$. In particular, for any $(x,x')\in M\times M$   there exists such a bundle isomorphism with $\Phi(P_x)=P_{x'}$.	
\end{thry}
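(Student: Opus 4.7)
The plan is to apply Proposition \ref{globalAff} to the connection $\bar\nabla$ on the total space $P$. The decisive preliminary step, which absorbs the completeness of $(M,g)$, is to prove that $\bar\nabla$ is complete.

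Since the triple is locally homogeneous, Theorem \ref{equivTh} produces a pair $(\nabla,A)$ satisfying \eqref{ConditionsNablaA}, and Theorem \ref{ConnectionsCA} then gives the connection $\bar\nabla=\nabla^{h,A}\oplus\nabla^{v,A}$ on $T_P$ with $\bar\nabla R^{\bar\nabla}=\bar\nabla T^{\bar\nabla}=0$. To establish completeness I fix a $K$-invariant inner product on $\kg$ and form the associated connection metric $g_A$ on $P$, recalled at the beginning of Section \ref{LHCchap}. Using Proposition \ref{nablabarPr} and the $\ad$-invariance of the inner product, a direct check shows that $\bar\nabla$ is a $g_A$-metric connection. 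Since $p:(P,g_A)\to(M,g)$ is a Riemannian submersion with complete base and compact fibres, the standard argument ($g_A$-Cauchy sequences project to convergent $g$-Cauchy sequences, then use compactness of the fibres) shows that $(P,g_A)$ is complete; by Proposition \ref{completeNabla}, $\bar\nabla$ is then complete as well.

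Because $M$ and $K$ are connected, $P$ is connected, and Proposition \ref{globalAff} now yields that $\sg:\Sg^{\bar\nabla}\to P$ is a covering map. By Lemma \ref{OpenLemma}, $\Sg^{\bar\nabla}_{g,K}$ is open in $\Sg^{\bar\nabla}$, and as a union of leaves of ${\cal D}^{\bar\nabla}$ it is a union of connected components of $\Sg^{\bar\nabla}$, hence also closed. So $\sg|_{\Sg^{\bar\nabla}_{g,K}}$ is a covering onto its image, a union of components of $P$; this image is $K$-invariant (by equivariance of $\sg$) and meets every fibre $P_x$ (by Lemma \ref{SurjPairs}), so it equals $P$. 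The $K$-action on $\Sg^{\bar\nabla}_{g,K}$ is free (since $K$ acts freely on $P$ and $\sg$ is equivariant) and proper ($K$ compact), so $\Sg^{\bar\nabla}_{g,K}/K$ is a smooth manifold and the induced map to $M=P/K$ inherits the covering property from $\sg$. This proves the first assertion.

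For the second assertion, assume $M$ simply connected, let $\varphi\in\Sg^{\bar\nabla}_{g,K}$, and let ${\cal C}$ be the connected component of $\Sg^{\bar\nabla}_{g,K}/K$ containing $[\varphi]$. As a connected cover of the simply connected base $M$, the restriction of the induced map to ${\cal C}$ is a diffeomorphism; its inverse assigns to each $x\in M$ a unique $K$-orbit $\Fg_x\subset \Sg^{\bar\nabla}_{g,K}$ that $\sg$ maps $K$-equivariantly and bijectively onto $P_x$. Setting $\Phi(y)\coloneqq \tg(\varphi_y)$, where $\varphi_y$ is the unique element of $\Fg_{p(y)}$ with $\sg(\varphi_y)=y$, produces a smooth $K$-equivariant map $\Phi:P\to P$; the defining conditions of $S^{\bar\nabla}_{g,K}$ guarantee that $\Phi$ is a bundle isomorphism covering an isometry $M\to M$ and satisfying $\Phi_*(A_0)=A_0$. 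Uniqueness is immediate because ${\cal C}$ is determined by $\varphi$, and the final statement of the theorem follows by using Lemma \ref{SurjPairs} to pick $\varphi$ with source in $P_x$ and target in $P_{x'}$. The main obstacle is the completeness of $\bar\nabla$: the argument is short once one notices that the fibre-and-horizontal splitting used to define $\bar\nabla$ makes it automatically $g_A$-metric, and that compactness of $K$ transfers completeness of $(M,g)$ to $(P,g_A)$.
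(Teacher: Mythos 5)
Your proof is correct, but it follows a genuinely different route from the paper at the decisive step. The paper's (very terse) argument mimics Proposition \ref{globalAff} and Corollary \ref{globalAffg} directly on the quotient: it extends germs over $\nabla$-convex neighbourhoods $U\subset M$ by $\bar\nabla$-parallel transport along the $A$-horizontal lifts of $\nabla$-geodesics in $U$; since $\nabla$ is complete by Proposition \ref{completeNabla} and $A$-horizontal lifts of curves always exist for the whole parameter interval, these particular $\bar\nabla$-geodesics in $P$ are automatically complete, and the covering property of $\Sg^{\bar\nabla}_{g,K}/K\to M$ follows without ever invoking geodesic completeness of $\bar\nabla$ itself. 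You instead prove the stronger structural fact that $\bar\nabla$ is complete: you observe that $\bar\nabla=\nabla^{h,A}\oplus\nabla^{v,A}$ is a metric connection for the connection metric $g_A$ (using $\nabla g=0$, $\ad$-invariance of the inner product on $\kg$, and the fact that $\bar\nabla$ preserves the $A\oplus V_P$ splitting), and that $(P,g_A)$ is complete as the total space of a Riemannian submersion with complete base and compact fibres; then you apply Proposition \ref{globalAff} to $(P,\bar\nabla)$ wholesale and descend by the $K$-action. Both are valid. The paper's version is more economical and sidesteps the metric argument entirely; yours yields a slightly stronger conclusion ($\bar\nabla$ and $g_A$ are complete, $\bar\nabla$ is $g_A$-metric) and fits more naturally into the Proposition \ref{globalAff}/Corollary \ref{globalAffg} black-box pattern used elsewhere in Section 4.2. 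Two small points worth spelling out in your write-up: the ``Cauchy sequences project, then use compactness of the fibres'' step should be made precise by pushing $y_n$ into the limit fibre $P_x$ via $A$-horizontal lifts of minimizing geodesics (so $d_P(y_n,z_n)\le d_M(x_n,x)\to 0$ with $z_n\in P_x$ compact), and the application of Proposition \ref{globalAff} needs $P$ connected (hence $K$ connected), which the paper's tailored argument over $M$ does not require.
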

\begin{proof}
We use the same method as in the proof of Proposition  \ref{globalAff}, Corollary  \ref{globalAffg}. The fact that $\Sg^{\bar\nabla}_{g,K}/K\to M$ is a covering map is obtained using parallel transport with respect to $\bar\nabla$ along $A$-horizontal lifts of $\nabla$-geodesics in $M$.
\end{proof}

\begin{co} \label{CompleteBasecor}
Let $(g,P\textmap{p} M,A_0)$  be a locally symmetric	 triple with $M$ simply connected, and $(M,g)$ complete. Then $(g,P\textmap{p} M,A_0)$  is a symmetric triple.
\end{co}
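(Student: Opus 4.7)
My plan is to reduce the statement to the locally homogeneous extension result Theorem~\ref{CompleteBase} by exhibiting a pair $(\nabla,A)$ satisfying the hypotheses of that theorem, and then to upgrade the resulting global bundle isomorphisms to genuine symmetries by exploiting the uniqueness clause of the extension. First, I would invoke Theorem~\ref{LSLHTh}~(\ref{LsymSt}) to deduce, from the locally symmetric assumption, the identities $\nabla^{g}R^{g}=0$ and $(\nabla^{g}\otimes\nabla^{A_0})F^{A_0}=0$. Taking $\nabla\coloneqq \nabla^{g}$ and $A\coloneqq A_0$, the remaining conditions $\nabla T^{\nabla}=0$ and $(\nabla\otimes\nabla^{A})(A-A_0)=0$ in the system~(\ref{ConditionsNablaA}) are trivially verified, so Theorem~\ref{CompleteBase} becomes applicable with the linear connection $\bar\nabla=\nabla^{h,A_0}\oplus\nabla^{v,A_0}$ on $T_P$.

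Next, for each $x\in M$, I would fix a point $y\in P_x$ and use the local symmetry to produce a bundle isomorphism $\Phi\colon P_U\to P_U$ covering a local isometry $s\colon U\to U$ with $s(x)=x$, $d_xs=-\id_{T_xM}$, fixing $P_x$ pointwise and preserving $A_0$. Such a $\Phi$ defines a germ in $\Sg^{\bar\nabla}_{g,K}$ with source and target $y$, whose differential at $y$ is exactly the endomorphism $f_y$ constructed in the proof of Theorem~\ref{LSLHTh}~(\ref{LsymSt}) (that is, $-\id$ on the $A_0$-horizontal subspace at $y$ and $+\id$ on $T_y(P_x)$). Since $(M,g)$ is complete and $M$ is simply connected, Theorem~\ref{CompleteBase} extends this germ uniquely to a global bundle isomorphism $\Phi_x\colon P\to P$ covering a global isometry $s_x\colon M\to M$, with $\Phi_x^{*}(A_0)=A_0$.

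It only remains to verify the symmetry-specific conditions at $x$. By the uniqueness of the extension, $\Phi_x$ agrees with $\Phi$ in a neighbourhood of $y$, whence $\Phi_x(y)=y$ and $(\Phi_x)_{*y}=f_y$, which gives $s_x(x)=x$ and $d_xs_x=-\id_{T_xM}$. Finally, the $K$-equivariance of $\Phi_x$ together with $\Phi_x(y)=y$ forces $\Phi_x(yk)=yk$ for every $k\in K$, so $\Phi_x$ fixes the whole fibre $P_x=yK$ pointwise. I expect the main, though short, obstacle to lie in this last step, namely transferring the pointwise fixing conditions at $x$ from the local symmetry to its global extension; once the uniqueness clause of Theorem~\ref{CompleteBase} is combined with $K$-equivariance, no further argument is needed, and Definition~\ref{DefSymLH} is satisfied at every $x\in M$.
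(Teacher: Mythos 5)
Your proof is correct and follows the paper's route: it reduces to Theorem~\ref{CompleteBase} via the pair $(\nabla^{g},A_0)$ (which satisfies the system~(\ref{ConditionsNablaA}) because of Theorem~\ref{LSLHTh}~(\ref{LsymSt}) and $T^{\nabla^g}=0$) and extends the local involutive bundle isomorphisms $\Phi_x$ to global ones. The paper's proof is a one-liner that glosses over precisely the points you make explicit — that the local $\Phi$ is $\bar\nabla$-affine so its germ lies in $\Sg^{\bar\nabla}_{g,K}$, that the uniqueness clause forces the extension to agree with $\Phi$ near $y$, and that $K$-equivariance together with $\Phi_x(y)=y$ yields pointwise fixing of $P_x$ — but the underlying argument is the same.
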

\begin{proof}
By Theorem 	\ref{equivTh} any locally symmetric triple is locally homogeneous, so Theorem \ref{CompleteBase} applies, so the locally defined involutive bundle isomorphisms $\Phi_x$ given by the  locally symmetric	condition (see Definition \ref{DefSymLH}) extend to the whole total space $P$.
\end{proof}

Using theorem \ref{CompleteBase} we can also prove the following classification theorem for LH triples:

\begin{thry} \label{main-DIffcase} Let $M$ be a compact manifold, and $K$ be a compact Lie group. Let $\pi:\tilde M\to M$ be the universal cover of $M$, $\Gamma$ be the corresponding covering transformation group. Then, for any   locally homogeneous triple $(g,P\textmap{p} M,A)$  with structure group $K$ on $M$ there exists
\begin{enumerate}
\item 	A connection $B$ on the pull-back bundle $Q\coloneqq \pi^*(P)$.
\item   A  closed subgroup $G\subset \Iso(\tilde M,\pi^*(g))$ acting transitively  on $\tilde M$ which contains $\Gamma$ and leaves  invariant the gauge class $[B]\in {\cal B}(Q)$.
\item A lift $\jg:\Gamma\to {\cal G}_G^B(Q)$ of  the inclusion monomorphism $\iota_\Gamma:\Gamma\to G$, where ${\cal G}_G^B(Q)$ stands for the group of automorphisms of $(Q,B)$ which lift transformations in $G$.
\item An isomorphism between the $\Gamma$-quotient of $(\pi^*g,Q\to \tilde M,B)$ and the initial triple $(g,P\textmap{p} M,A)$ .

\end{enumerate}
\end{thry}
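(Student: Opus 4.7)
The strategy is to lift the data to the simply connected universal cover $\tilde M$ and invoke Theorem \ref{CompleteBase}. Compactness of $M$ makes $(M,g)$ complete, so $(\tilde M,\pi^*g)$ is complete and simply connected. Set $Q\coloneqq \pi^*(P)$ and $B\coloneqq \pi^*(A)$. Since $\pi$ is a local isometry and pullback of a bundle-with-connection is local, local homogeneity of $(g,P,A)$ transfers to $(\pi^*g,Q\to\tilde M,B)$: given $\tilde x_1,\tilde x_2\in\tilde M$, the local data around $\pi(\tilde x_1),\pi(\tilde x_2)$ provided by the original local homogeneity can be uniquely lifted through $\pi$. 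Theorem \ref{CompleteBase} then provides, for every $(\tilde x,\tilde x')\in\tilde M\times\tilde M$, a bundle automorphism $\Phi:Q\to Q$ covering an isometry of $(\tilde M,\pi^*g)$, with $\Phi^*B=B$ and $\Phi(Q_{\tilde x})=Q_{\tilde x'}$. Define
$$
\tilde G\coloneqq \{\Phi:Q\to Q\text{ bundle automorphism}\mid \Phi^*B=B,\ \Phi\text{ covers an isometry of } (\tilde M,\pi^*g)\},
$$
and let $G\subset\mathrm{Iso}(\tilde M,\pi^*g)$ be the image of $\tilde G$ under projection to the base. By construction $\tilde G=\mathcal{G}_G^B(Q)$, and $G$ acts transitively on $\tilde M$.

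Each $\gamma\in\Gamma$ is an isometry of $(\tilde M,\pi^*g)$; using the realization $Q=\tilde M\times_M P$, the canonical lift $\gamma^Q(\tilde x,y)\coloneqq(\gamma(\tilde x),y)$ is a well-defined bundle automorphism covering $\gamma$, and $(\gamma^Q)^*B=B$ because $B=\pi^*A$ and $\pi\circ\gamma=\pi$. Hence $\gamma^Q\in\mathcal{G}_G^B(Q)$, so $\Gamma\subset G$, and setting $\jg(\gamma)\coloneqq\gamma^Q$ yields the desired group homomorphism lifting $\iota_\Gamma$. The induced $\Gamma$-action on $Q$ via $\jg$ is free and proper (it is the lift of the covering $\Gamma$-action on the base), and preserves $B$ by construction. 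The obvious maps $Q/\Gamma\to P$, $\tilde M/\Gamma\to M$ induced by $(\tilde x,y)\mapsto y$ and $\tilde x\mapsto\pi(\tilde x)$ furnish the required isomorphism between the $\Gamma$-quotient of $(\pi^*g,Q\to\tilde M,B)$ and the original triple $(g,P\to M,A)$.

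The one point requiring genuine work is verifying that $G$ is closed in $\mathrm{Iso}(\tilde M,\pi^*g)$. The natural approach is to equip bundle automorphisms of $Q$ with the compact-open topology; then $\tilde G$ embeds into the isometry group $\mathrm{Iso}(Q,g_B)$ of the connection metric $g_B$ on $Q$, which is a finite-dimensional Lie group because $K$ is compact and the fibers of $p\circ\pi$ are compact. Closedness of $\tilde G$ inside $\mathrm{Iso}(Q,g_B)$ follows from continuity of the pullback action on connections and from closedness of the subgroup of $K$-equivariant isometries (a limit of $K$-equivariant bundle maps is $K$-equivariant). Since the projection $\mathrm{Iso}(Q,g_B)\to\mathrm{Iso}(\tilde M,\pi^*g)$ has as kernel the group of vertical $g_B$-isometries commuting with $K$, which is compact, the image $G$ of the closed subgroup $\tilde G$ is itself closed and inherits a natural Lie subgroup structure; this is the main technical obstacle, as transitivity, the presence of $\Gamma$, and the quotient identification are all direct consequences of the construction.
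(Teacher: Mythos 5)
Your argument follows the same skeleton as the paper's proof: set $B=\pi^*A$, observe that local homogeneity of $(g,P,A)$ lifts to $(\pi^*g,Q,B)$ via the local isometry $\pi$, apply Theorem~\ref{CompleteBase} to get transitivity of the relevant group of bundle automorphisms, define the lift $\jg$ on the fiber-product realization $Q=\tilde M\times_M P$ by $\jg(\gamma)(\tilde x,y)=(\gamma(\tilde x),y)$, and reduce everything to showing $G$ is closed in $\Iso(\tilde M,\pi^*g)$. You are in fact slightly more careful than the paper: you name the choice $B\coloneqq\pi^*A$ explicitly (the paper's proof uses $B$ without ever constructing it), you note explicitly that compactness of $M$ gives completeness and that local homogeneity transfers to the pullback, and you verify $(\gamma^Q)^*B=B$ directly. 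These are all correct and fill small expository gaps in the paper.

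The one genuine difference is the closedness step, and here your argument has a gap. The paper disposes of closedness by invoking Lemma~\ref{PropClosed} (proved elsewhere, in~\cite{Ba1}), which states directly that for a smooth $L$-action on $M$ and a compact structure group $K$, the subset of $L$ consisting of elements that lift to $\Phi\in\Hom_{\varphi_l}(P,P)$ with $\Phi^*A=A$ is a closed Lie subgroup. You attempt instead to run this through the isometry group $\Iso(Q,g_B)$ of the connection metric. The decisive step in your version reads: \emph{since the kernel of the projection $\Iso(Q,g_B)\to\Iso(\tilde M,\pi^*g)$ is compact, the image of the closed subgroup $\tilde G$ is closed}. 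That implication is false as stated: a continuous Lie group homomorphism with compact (even trivial) kernel can have a non-closed image, the standard example being an irrational one-parameter subgroup of a torus. What is actually needed is \emph{properness} of the relevant projection, not just compactness of its kernel. This can be obtained by combining two facts you only half-invoke: (i) the isometric action of $\Iso(Q,g_B)$ on $Q$ is proper (so the orbit map $\Phi\mapsto\Phi(y_0)$ is proper for any fixed $y_0\in Q$); and (ii) the fibers of $Q\to\tilde M$ are compact because $K$ is. Together these show that the preimage in $\tilde G$ of a compact subset of $\Iso(\tilde M,\pi^*g)$ is compact, hence the projection restricted to the closed set $\tilde G$ is proper, hence $G$ is closed. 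There is also a minor imprecision: the projection you use is not defined on all of $\Iso(Q,g_B)$, only on the subgroup of isometries that preserve the Riemannian-submersion structure (equivalently, the $K$-equivariant ones); this should be said, although $\tilde G$ does land inside that subgroup. With the properness argument filled in, your route becomes a legitimate and essentially self-contained alternative to citing~\cite{Ba1}, which is a worthwhile thing to have; as written, however, the closedness claim does not follow from what you say.
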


\begin{proof}
Let $G\subset \Iso(\tilde M,\tilde g)$ be the subgroup defined by
$$G\coloneqq \big\{\psi\in \Iso(\tilde M,\tilde g)|\ \exists \,\Psi:Q\to Q \hbox{ $\psi$-covering bundle isom., }\Psi^*(B)=B\big\}.
$$
Using the fact that $K$ is compact, it follows by Lemma \ref{PropClosed} below, that $G$ is a closed subgroup of the Lie group $\Iso(\tilde M,\tilde g)$. Note that Lemma \ref{PropClosed} applies because the action of the Lie group $\Iso(\tilde M,\tilde g)$ on $\tilde M$ is smooth.
 \vspace{1.5mm}
 
Applying Theorem  \ref{CompleteBase} to  $(\pi^*(g),Q,B)$, it follows that $G$ acts transitively on $\tilde M$, and leaves invariant the gauge class $[B]$. Moreover, the definition of $G$ shows that it contains $\Gamma$. The lift $\jg$ is obtained as follows: for $\varphi\in \Gamma$ we define $\jg(\varphi):Q\to Q$ by 
$$\jg(\varphi)(\tilde x,y)\coloneqq (\varphi(\tilde x),y)\ \forall (\tilde x,y)\in Q\coloneqq \tilde M\times_\pi P\,.
$$
Note that the map $\Gamma\ni\varphi\mapsto \jg(\varphi)\in {\cal G}_G^B(Q)$ is a group morphism (as required). 
\end{proof}

\begin{lm}\cite{Ba1}\label{PropClosed}
Let $M$ be a differentiable manifold, $K$ be a compact Lie group, $p:P\to M$ be a  principal $K$-bundle  over $M$, and  $A\in{\cal A}(P)$ be a connection on $P$. Let $\alpha:L\times M\to M$ be a a smooth action of a Lie group	$L$ on   $M$. For $l\in L$ denote by $\varphi_l:M\to M$ the corresponding diffeomorphism. The subspace
$$L_A\coloneqq \{l\in L|\ \exists \Phi\in\Hom_{\varphi_l}(P,P)\hbox{ such that }\Phi^*(A)=A\} 
$$
is a closed Lie subgroup of $L$.
\end{lm}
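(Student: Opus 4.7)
The plan is to first verify that $L_A$ is a subgroup of $L$, then establish closedness in $L$, and finally invoke E.~Cartan's closed subgroup theorem. The subgroup property is immediate: if $l_1,l_2\in L_A$ are witnessed by bundle isomorphisms $\Phi_1,\Phi_2$, then $\Phi_1\circ\Phi_2$ covers $\varphi_{l_1l_2}$ and satisfies $(\Phi_1\circ\Phi_2)^*A=\Phi_2^*\Phi_1^*A=A$; analogous identities handle $\Phi^{-1}$ and $\id_P$.

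For closedness, I would take a sequence $l_n\to l$ in $L_A$ with witnesses $\Phi_n$ and construct a witness $\Phi$ for $l$. Fix a base point $x_0$ in each connected component of $M$ (treating components by a diagonal subsequence if there are infinitely many) and a point $y_0\in P_{x_0}$. Since $K$ is compact, the fiber $P_{\varphi_l(x_0)}$ is compact, and in a local trivialisation around $\varphi_l(x_0)$ the points $\Phi_n(y_0)\in P_{\varphi_{l_n}(x_0)}$ eventually lie in a relatively compact set; after passing to a subsequence, $\Phi_n(y_0)\to y_*$ for some $y_*\in P_{\varphi_l(x_0)}$.

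Let $\tau^A_\gamma$ denote $A$-parallel transport along a path $\gamma$. I would then define $\Phi$ intrinsically: for $y'\in P$ in the component of $y_0$, write $y'=\tau^A_\gamma(y_0)\cdot h$ for some smooth path $\gamma$ from $x_0$ to $p(y')$ and some $h\in K$, and set $\Phi(y')\coloneqq \tau^A_{\varphi_l\circ\gamma}(y_*)\cdot h$. Every $\Phi_n$ satisfies the analogous formula with $y_*$ replaced by $\Phi_n(y_0)$ and $\varphi_l$ by $\varphi_{l_n}$, because $\Phi_n$ is $K$-equivariant, $\varphi_{l_n}$-covering, and $A$-preserving. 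Path-independence of $\Phi$ then follows by letting $n\to\infty$ in the corresponding identity for $\Phi_n$, using smooth dependence of $A$-parallel transport on the initial point and on $C^\infty$-varying paths (note $\varphi_{l_n}\circ\gamma\to\varphi_l\circ\gamma$ in $C^\infty$ because $\alpha$ is smooth). Smoothness of $\Phi$ is obtained locally by choosing, around any point of $M$, a smooth family of paths emanating from $x_0$ and applying the same parallel-transport formula; the defining formula also renders $K$-equivariance, the $\varphi_l$-covering property, and $\Phi^*A=A$ transparent.

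Once $L_A$ is known to be closed in $L$, Cartan's closed subgroup theorem upgrades it to an embedded Lie subgroup, finishing the proof. The main obstacle is the construction of $\Phi$: one must use compactness of $K$ precisely at the step of extracting a convergent subsequence of $\Phi_n(y_0)$, and then verify that the parallel-transport formula defining $\Phi$ is independent of the choice of $\gamma$---a verification that hinges on the fact that the same formula is valid (and trivially well-defined) for each approximating $\Phi_n$.
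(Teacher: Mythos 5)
Your proof is correct and complete. The paper gives no argument for this lemma, deferring entirely to the cited reference \cite{Ba1}, so there is nothing in the text to compare against; the route you take is nevertheless the natural one. The subgroup property is as you say. For closedness the decisive observation is exactly the one you isolate: an $A$-preserving bundle isomorphism covering a prescribed diffeomorphism is determined by its value at one point of each connected component via $A$-parallel transport, so a limiting witness $\Phi$ can be reconstructed from a convergent subsequence of the points $\Phi_n(y_0)$, and compactness of $K$ enters precisely at the extraction of that subsequence. Passing the identity $\Phi_n\bigl(\tau^A_\gamma(y_0)\cdot h\bigr)=\tau^A_{\varphi_{l_n}\circ\gamma}\bigl(\Phi_n(y_0)\bigr)\cdot h$ to the limit (using smoothness of the action $\alpha$ and continuous dependence of parallel transport on the path and on the initial point) gives well-definedness, and the defining formula makes $K$-equivariance, the $\varphi_l$-covering property, smoothness, and $\Phi^*A=A$ manifest; Cartan's closed subgroup theorem then finishes.
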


 Theorem \ref{main-DIffcase} shows that:
\begin{co}\label{main-coro} Let $M$ be a connected compact   manifold, and $K$ be a compact Lie group.   Let $\pi:\tilde M\to M$ be the universal cover of $M$, $\Gamma$ be the corresponding covering transformation group.
Then any    locally homogeneous triple $(g,P\textmap{p} M,A)$  with structure group $K$ on $M$ can be   identified with a $\Gamma$-quotient of a  homogeneous triple $(\pi^*g,Q\coloneqq \pi^*(P)\to \tilde M, B)$ on the universal cover $\tilde M$.	
\end{co}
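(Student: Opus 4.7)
The plan is to unpack Theorem \ref{main-DIffcase} applied to the given triple and observe that its four outputs already package exactly the two statements needed: global homogeneity of the pullback triple, and identification of the original triple as a $\Gamma$-quotient.

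First I would invoke Theorem \ref{main-DIffcase} on $(g,P\textmap{p}M,A)$. This produces a connection $B$ on $Q\coloneqq \pi^*(P)$, a closed subgroup $G\subset \Iso(\tilde M,\pi^*g)$ acting transitively on $\tilde M$ with $\Gamma\subset G$, a lift $\jg:\Gamma\to {\cal G}_G^B(Q)$ of the inclusion $\iota_\Gamma:\Gamma\hookrightarrow G$, and an isomorphism between the $\Gamma$-quotient of $(\pi^*g,Q\to\tilde M,B)$ and the original triple. The last item is precisely the ``$\Gamma$-quotient'' half of the corollary, so no further work is needed there.

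Next I would verify that $(\pi^*g,Q\to\tilde M,B)$ is globally homogeneous in the sense of Definition \ref{DefSymLH}. Given $\tilde x_1,\tilde x_2\in\tilde M$, transitivity of $G$ supplies an isometry $\varphi\in G\subset \Iso(\tilde M,\pi^*g)$ with $\varphi(\tilde x_1)=\tilde x_2$. By the definition of ${\cal G}_G^B(Q)$ used in Theorem \ref{main-DIffcase}, the surjectivity of the projection ${\cal G}_G^B(Q)\to G$ is built into the construction of $G$: every element of $G$ is by definition an isometry that admits a $B$-preserving lift to $Q$. Choose such a lift $\Phi\in {\cal G}_G^B(Q)$ of $\varphi$. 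Then $\Phi$ is a $\varphi$-covering bundle isomorphism of $Q$ with $\Phi^*(B)=B$, which is exactly the data required to witness homogeneity of the triple at the pair $(\tilde x_1,\tilde x_2)$.

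The only conceptual subtlety — and the single place where the corollary is more than a tautological reformulation of Theorem \ref{main-DIffcase} — is the distinction between $G$ preserving the gauge class $[B]$ and the lifted group ${\cal G}_G^B(Q)$ preserving $B$ itself. The theorem phrases the conclusion in terms of $[B]$, but its construction of $G$ via the subgroup defined in the proof (through Lemma \ref{PropClosed}) is tight enough that every element of $G$ has an honest $B$-preserving lift. Once this point is made explicit, the corollary follows by simply reading off the pieces: global homogeneity of $(\pi^*g,Q,B)$ on the one hand, and the $\Gamma$-equivariant descent to $(g,P,A)$ recorded in item (4) of Theorem \ref{main-DIffcase} on the other.
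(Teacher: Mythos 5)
Your proposal is correct and it matches the paper's own (implicit) reasoning: the paper offers no explicit proof of Corollary \ref{main-coro} beyond the sentence ``Theorem \ref{main-DIffcase} shows that,'' so the corollary is meant to be read off exactly as you do — item~(4) of the theorem gives the $\Gamma$-quotient identification, and transitivity of $G$ together with the defining property of ${\cal G}_G^B(Q)$ gives global homogeneity of $(\pi^*g,Q,B)$. Your observation that the theorem's \emph{statement} only records $G$-invariance of the gauge class $[B]$, while the homogeneity check really needs the proof-level definition $G=\{\psi\in\Iso(\tilde M,\tilde g)\ |\ \exists\,\Psi\ \psi\text{-covering with }\Psi^*(B)=B\}$, is exactly the right point to make explicit; it is the only nontrivial bit of unpacking required.
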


\subsection{Locally homogeneous $\Spin^c$-pairs and triples}

The group  $\Spin^c(n)\coloneqq\Spin(n)\times_{\Z_2}\S^1$ fits in a short exact sequence
$$
1\to \{\pm 1\} \to \Spin^c(n)\textmap{(\rho,\delta)}\SO(n)\times\S^1\to \{1\}\,,
$$  
where $\rho$ is induced by the canonical morphism $r:\Spin(n)\to \SO(n)$, and $\delta$ is induced by $\Spin(n)\times \S^1\ni  (u,z)\mapsto z^2$.  
Let $(M,g)$ be a connected, oriented Riemannian $n$-manifold. A $\Spin^c$-structure on $M$ is a bundle morphism $\Lambda:Q\to \SO(M)$ of type $\rho$, where $Q$ is a $\Spin^c(n)$-bundle.  The associated $\S^1$-bundle $\delta(Q)\coloneqq Q\times_\delta\S^1$ is called the determinant bundle  of $Q$ (or of $\Lambda$). The spinor bundle of $\Lambda$ is the associated bundle $S\coloneqq Q\times_\kappa\Delta_n$, where $\kappa:\Spin^c(n)\to \GL(\Delta_n)$ is the natural extension of the $\Spin$ representation.

We recall (see for instance \cite[Remark, p. 48]{Fr}, \cite[Proposition 5.2.7 (ii)]{Te}) that $(M,g)$ admits a $\Spin^c$-structure if and only if the Stiefel-Whitney class $w_2(M)$ admits an integral lift, and that, if non-empty, the set of isomorphism classes of  $\Spin^c$-structures on $M$ is an $H^2(M,\Z)$-torsor (see \cite[Proposition p. 53]{Fr}, \cite[Proposition 5.2.7 (iii)]{Te}). 

Whereas the principal bundle of a $\Spin$-structure comes with a unique lift of the Levi-Civita connection $C_0\in{\cal A}(\SO(M))$, for a $\Spin^c$-structure  we have:

\begin{re}\label{spin-spinc} Let  $\Lambda:Q\to \SO(M)$ be a $\Spin^c$-structure on $(M,g)$. 
The map ${\cal A}(Q)\ni A\mapsto \delta(A)\in {\cal A}(\delta(Q))$ induces an isomorphism between the space of lifts of $C_0$ in ${\cal A}(Q)$ and the space  ${\cal A}(\delta(Q))$ of connections on the determinant $\S^1$-bundle $\delta(Q)$.
 \end{re} 
 \begin{re}\label{spincDirac}
 These Levi-Civita lifts are important because they intervene in the construction of $\Spin^c$-Dirac operators. The Dirac operator $\Dr_A$ associated with a lift $A$ of $C_0$ is defined as the composition 
 $$
\Gamma(M,S)\textmap{\nabla_{A}}\Gamma(M,\Lambda^1_M\otimes S)\textmap{c} \Gamma(M,S)\,,
$$
where   $c$ is induced by the Clifford multiplication $\Lambda^1_M\otimes S\to S$ (see for instance \cite[Section 3.2]{Fr}, \cite[section 5.3]{Te}). $\Dr_A$ is a first order, elliptic, self-adjoint operator. 
\end{re}

Note that the assignment $A\mapsto\slashed{D}_A$ plays a fundamental role in Seiberg-Witten theory (see for instance \cite[section 6.1]{Te}). Remark \ref{spincDirac} justifies the definition
\begin{dt}
Let $(M,g)$ be a compact, connected, oriented Riemannian $n$-manifold. A $\Spin^c$-pair on $(M,g)$ is a pair $(\Lambda,A)$, where $\Lambda: Q\to \SO(M)$ is a $\Spin^c$-structure on $(M,g)$, and $A\in {\cal A}(Q)$ is a lift of the Levi-Civita connection.
\end{dt}
In other words a $\Spin^c$-pair on $(M,g)$ is a system of data which intervene in the construction of a $\Spin^c$-Dirac operator. 
For such pairs we have a natural local homogeneity condition
\begin{dt}\label{defLHSpinc}
Suppose that $(M,g)$ is locally homogeneous. A $\Spin^c$-pair $(\Lambda,A)$ on $(M,g)$ is called LH   if for any  pair $(x_0,x_1)\in M\times M$ there exists a local isometry $x_0\in U_0\textmap{\varphi} U_1\ni x_1$, and an $\tilde \varphi_*$-covering bundle isomorphism $Q_{U_0}\textmap{\Phi}  Q_{U_1}$ which preserves $A$.
\end{dt}

\begin{pr}
Let $(\Lambda,A)$ be a $\Spin^c$-pair on  a locally homogeneous Riemannian manifold $(M,g)$, let $a\in {\cal A}(\delta(Q))$ be the associated $\S^1$-connection, and $F_a\in iA^2(M)$ be its curvature form. The following conditions are equivalent:
\begin{enumerate} 
\item  $(\Lambda,A)$ is 	locally homogeneous.
\item The triple $(g, \delta(Q)\to M,a)$ is locally homogeneous.
\item for any  $(x_0,x_1)\in M\times M$ there exists a local isometry $x_0\in U_0\textmap{\varphi} U_1\ni x_1$ such that $\varphi^*(F_a|_{U_1})=F_a|_{U_0}$.
\item $(M,g)$ admits an Ambrose-Singer connection $\nabla$ such that $\nabla F_a=0$.
\end{enumerate}
	
\end{pr}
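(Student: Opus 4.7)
The implication $(1) \Rightarrow (2)$ is immediate: applying the associated bundle functor $\delta$ to a $\tilde\varphi_*$-covering isomorphism $\Phi: Q|_{U_0} \to Q|_{U_1}$ with $\Phi^*A=A$ produces a $\varphi$-covering isomorphism $\delta(\Phi): \delta(Q)|_{U_0} \to \delta(Q)|_{U_1}$ with $\delta(\Phi)^*a=a$, where $a = \delta(A)$. For $(2) \Rightarrow (3)$, note that the adjoint bundle $\ad(\delta(Q))$ is canonically trivial (since $\S^1$ is abelian), so $F_a$ identifies with an $i\R$-valued $2$-form on $M$; any bundle isomorphism $\Psi$ preserving $a$ preserves its curvature, whence $\varphi^*(F_a|_{U_1}) = F_a|_{U_0}$.

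For $(3) \Rightarrow (4)$, observe that condition (3) is precisely the local homogeneity, in the sense of Kiri\v{c}enko, of the system $(g, F_a)$ consisting of the metric $g$ and the $(0,2)$-tensor field $F_a$. Theorem \ref{KiriTh} then furnishes a metric connection $\nabla$ with $\nabla R^\nabla = \nabla T^\nabla = \nabla F_a = 0$, which is an Ambrose-Singer connection enjoying the additional property $\nabla F_a = 0$ required by (4).

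The essential direction is $(4) \Rightarrow (1)$. My plan is first to apply Theorem \ref{equivTh} to the $\S^1$-triple $(g, \delta(Q) \to M, a)$ with the pair $(\nabla, a)$: since $\ad(\delta(Q))$ is trivial and $\nabla^a$ there reduces to the exterior derivative, $(\nabla \otimes \nabla^a) F^a$ is identified with $\nabla F_a = 0$, while $(\nabla \otimes \nabla^a)(a - a) = 0$ trivially. Combined with $\nabla R^\nabla = \nabla T^\nabla = 0$, Theorem \ref{equivTh} yields local homogeneity of $(g,\delta(Q),a)$: for every $(x_0, x_1)$ one obtains a local isometry $\varphi: U_0 \to U_1$ (with $U_0$ shrunk to be simply connected) and a $\varphi$-covering isomorphism $\Psi: \delta(Q)|_{U_0} \to \delta(Q)|_{U_1}$ preserving $a$.

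The hard part is the final step: lifting $(\tilde\varphi_*,\Psi)$ to a $\tilde\varphi_*$-covering bundle isomorphism $\Phi: Q|_{U_0} \to Q|_{U_1}$ preserving $A$. Using the short exact sequence $1 \to \{\pm 1\} \to \Spin^c(n) \to \SO(n) \times \S^1 \to 1$, the bundle $Q$ is a $\Z_2$-cover of the fibre product $\SO(M) \times_M \delta(Q)$, and the pair $(\tilde\varphi_*, \Psi)$ defines an isomorphism of the latter over $\varphi$. Since $U_0$ is simply connected, the obstruction in $H^1(U_0, \Z_2)$ vanishes and a lift $\Phi$ exists (two, in fact, differing by the nontrivial element of the kernel), in analogy with Lemma \ref{Lifts}. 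By Remark \ref{spin-spinc}, $\Phi^*A$ is the unique lift of $C_0$ with determinant $\delta(\Phi^*A) = \Psi^*a = a$; hence $\Phi^*A = A$, completing the proof that $(\Lambda,A)$ is locally homogeneous.
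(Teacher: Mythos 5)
Your proof is correct and follows essentially the same route as the paper: you close the cycle $(1)\Rightarrow(2)\Rightarrow(3)\Rightarrow(4)\Rightarrow(2)\Rightarrow(1)$, using Kiri\v{c}enko's theorem for $(3)\Rightarrow(4)$, Theorem \ref{equivTh} with $A_0=A=a$ for $(4)\Rightarrow(2)$, and the $\Z_2$-lifting argument through the double cover $Q\to\SO(M)\times_M\delta(Q)$ (with the uniqueness of Levi-Civita lifts from Remark \ref{spin-spinc}) for $(2)\Rightarrow(1)$, exactly as the paper does.
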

\begin{proof} The implications (1)$\Rightarrow$(2)$\Rightarrow$(3) are obvious. The equivalence (3) $\Leftrightarrow$(4) is a special case of Theorem \ref{KiriTh}. The implication (4) $\Rightarrow$ (2) follows from Theorem \ref{equivTh} choosing $A_0=A=a$.  Indeed,  the adjoint bundle $\ad(\delta(Q))$ can be identified with the trivial line bundle $i\underline{\R}$, and the  connection  induced on $\ad(\delta(Q))$ by any connection on $\delta(Q)$ is the standard trivial connection. Therefore the condition $(\nabla\otimes \nabla^a)F_a=0$ intervening in  Theorem \ref{equivTh}  becomes $\nabla F_a=0$.

It remains to prove the implication (2)$\Rightarrow$(1). Let $(x_0,x_1)\in M\times M$. We know that there exists an isometry $x_0\in U_0\textmap{\varphi} U_1\ni x_1$ and a $\varphi$-covering bundle isomorphism $\phi: \delta(Q)_{U_0}\to  \delta(Q)_{U_1}$ which preserves $a$. We obtain a bundle isomorphism 
$$(\tilde\varphi_*,\phi):(\SO(M)\times_M \delta(Q))_{U_0}\to (\SO(M)\times_M \delta(Q))_{U_1}$$
which preserves the connection $(C_0,a)$.  Assuming $U_i$ simply connected, there exists a bundle isomorphism $\Phi:Q_{U_0}\to Q_{U_1}$ which lifts $(\tilde\varphi_*,\phi)$, in particular $\Phi$ lifts $\tilde\varphi_*$ via $\Lambda$. The connections $A_{U_0}$, $\Phi^*(A_{U_1})$ coincide, because they both lift the connection $(C_0,a)_{U_0}=(\tilde\varphi_*,\phi)^*((C_0,a)_{U_1})$ on $(\SO(M)\times_M \delta(Q))_{U_0}$.
\end{proof}
Applying Theorem \ref{CompleteBase} to the triple $(g, \delta(Q)\to M,a)$ and using the same lifting argument as above we obtain:
\begin{co}
Let $(\Lambda,A)$ be an LH $\Spin^c$-pair on  a  complete, simply connected LH Riemannian manifold $(M,g)$. 	There exists a Lie group $G$ and an action $\beta : G \times Q \to Q$ by bundle isomorphisms with the following properties:
\begin{enumerate}
\item 	$\beta$ lifts a transitive action by isometries $\alpha : G \times M \to  M$.
\item $\beta$ leaves $A$   invariant.
\end{enumerate}
\end{co}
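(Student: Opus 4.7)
The strategy is to reduce to the determinant $\S^1$-bundle via the preceding Proposition, apply Theorem \ref{CompleteBase} there, and then lift the resulting global action back to $Q$ using simple connectedness of $M$.

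First, the preceding Proposition tells us that the LH $\Spin^c$-pair $(\Lambda,A)$ gives rise to a locally homogeneous triple $(g,\delta(Q)\to M,a)$ with $a\coloneqq\delta(A)$, the induced connection on the determinant $\S^1$-bundle. Since $(M,g)$ is complete and simply connected and $\S^1$ is compact, Theorem \ref{CompleteBase} applies: every germ of local isomorphism extends to a global bundle automorphism of $\delta(Q)$ covering an isometry of $(M,g)$ and preserving $a$. Let $G_1$ be the group of all such automorphisms; by Lemma \ref{PropClosed} it is a Lie group, and its projection to $\mathrm{Isom}(M,g)$ acts transitively on $M$.

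Second, I would lift the $G_1$-action from $\delta(Q)$ to $Q$. For $\Psi_1\in G_1$ covering an isometry $\varphi$, the pair $(\tilde\varphi_*,\Psi_1)$ preserves the connection $(C_0,a)$ on $\SO(M)\times_M\delta(Q)$. Since the kernel of $\Spin^c(n)\to\SO(n)\times\S^1$ is $\{\pm 1\}$, the natural map $Q\to\SO(M)\times_M\delta(Q)$ is a principal $\Z_2$-cover. The local lifting argument used in the proof of the preceding Proposition produces, over any simply connected open $U\subset M$, a $\tilde\varphi_*$-covering bundle isomorphism $Q_U\to Q_{\varphi(U)}$ preserving $A_U$. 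Covering $M$ by such opens and choosing local lifts, the discrepancies on overlaps define a \v{C}ech $1$-cocycle with values in $\Z_2$. Because $M$ is simply connected, $H^1(M,\Z_2)=0$, so this cocycle is a coboundary, and the local lifts can be normalized to glue into a global bundle automorphism $\Psi\colon Q\to Q$; uniqueness of the lift of $(C_0,a)$ through the $\Z_2$-cover then forces $\Psi^*A=A$.

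Third, I would define
\[G\coloneqq \{\Psi\colon Q\to Q \text{ bundle isom.}\mid \Psi \text{ covers some } \varphi\in\mathrm{Isom}(M,g),\ \Psi^*A=A\},\]
a closed Lie subgroup by the $Q$-version of Lemma \ref{PropClosed}. The natural map $G\to G_1$, $\Psi\mapsto\delta(\Psi)$, has kernel the constant $\pm 1$-gauge transformations of $Q$ and is surjective by the second step. Hence $G\to G_1\to\mathrm{Isom}(M,g)$ has transitive image, and the tautological action $\beta\colon G\times Q\to Q$ lifts a transitive isometric action on $M$ and preserves $A$. The main obstacle is the globalization step: each local lift of $\Psi_1$ is unique only up to a sign in $\{\pm 1\}$, and assembling them into a global lift requires the vanishing of a $\Z_2$-valued \v{C}ech obstruction on $M$, which is precisely where the simple connectedness hypothesis enters via $H^1(M,\Z_2)=0$.
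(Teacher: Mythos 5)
Your proposal is correct and follows the same route as the paper: reduce to the determinant triple $(g,\delta(Q)\to M,a)$ via the preceding Proposition, obtain global $a$-preserving automorphisms of $\delta(Q)$ covering a transitive isometry group from Theorem \ref{CompleteBase}, and then lift these through the $\Z_2$-covering $Q\to\SO(M)\times_M\delta(Q)$ using the simple connectedness of $M$. The paper is terse here ("using the same lifting argument as above"), so most of what you wrote is a legitimate filling-in of details. One small stylistic remark on your step 2: you globalize the lift via a \v{C}ech $1$-cocycle with values in $\Z_2$, whereas the argument implicit in the paper's Lemma \ref{Lifts} is slightly cleaner — the fibrewise lifts assemble into a double cover of $M$, and $H^1(M,\Z_2)=0$ makes this cover trivial, directly producing a global lift without patching; the two are of course equivalent and both use exactly $\pi_1(M)=1$. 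Also note that Lemma \ref{PropClosed} gives that the image $L_A\subset\Iso(M,g)$ of your $G$ is a closed Lie subgroup; to put a Lie group structure on $G$ itself one observes that $G\to L_A$ is surjective with compact kernel (the $A$-preserving gauge transformations covering $\id_M$), exactly as the spin case Corollary \ref{spin-coro} treats $\Gamma_Q$ as an extension of $\Gamma$.
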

One has also a natural local homogeneity condition for triples $(\Lambda,A,s)$, where $(\Lambda,A)$ is a $\Spin^c$-pair on $(M,g)$, and $s\in \Gamma(M,S)$. Such a triple will be called a $\Spin^c$-triple.
\begin{dt} 
 Let $(M,g)$ be an LH  Riemannian manifold. A    $\Spin^c$-triple $(\Lambda,A,s)$ is  called LH  if for any pair $(x_0,x_1)\in M\times M$ there exists a local isometry $x_0\in U_0\textmap{\varphi} U_1\ni x_1$, and a  $\tilde \varphi_*$-covering bundle isomorphism $Q_{U_0}\textmap{\Phi}  Q_{U_1}$ which preserves $A$ and $s$. 
 \end{dt} 
 
 Note that if  $(\Lambda,A,s)$ is LH, so is $(\Lambda,A,\Dr_As)$.
\begin{pr}\label{PropExB}
Let $(M,g)$ be 	LH  Riemannian manifold, and $(\Lambda,A,s)$  be an LH  $\Spin^c$-triple on  $(M,g)$. There exists   $B\in {\cal A}(Q)$ such that
\begin{enumerate}
\item The induced connection $C\coloneqq \rho(B)$ is an Ambrose-Singer connection on the frame bundle $\SO(M)$.
\item  Putting $a\coloneqq \delta(A)$, $b\coloneqq \delta(B)$, one has $\nabla^C F_a=0$, $\nabla^C(b-a)=0$.
\item $\nabla^B s=0$.
\end{enumerate}

\end{pr}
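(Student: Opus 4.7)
The plan is to apply Corollary \ref{ConnectionBCo} to the system of data $(M,Q,\rho,f,B_0,\sigma)$ with $G:=\Spin^c(n)$, $r:=\rho$ the canonical morphism $\Spin^c(n)\to \SO(n)\subset\GL(n)$, $f:Q\to L(M)$ the composition of $\Lambda$ with the inclusion $\SO(M)\hookrightarrow L(M)$, and $B_0:=A$. For the section, I take
$$\sigma:=(R^g,\,F_a,\,s)\in\Gamma(E),$$
where $E$ is the vector bundle associated to $Q$ via the $\Spin^c(n)$-representation $(\R^{n*})^{\otimes 4}\oplus (\Lambda^2\R^{n*}\otimes i\R)\oplus\Delta_n$; the first two summands are pulled back through $\rho$ (with $\S^1$ acting trivially on $i\R$), the third via $\kappa$.

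First I would check that $\sigma$ is infinitesimally homogeneous with respect to $A$. For a pair $(x_1,x_2)\in M\times M$, the LH hypothesis on the triple supplies a local isometry $\varphi:U_0\to U_1$ together with a $\tilde\varphi_*$-covering bundle isomorphism $\Phi:Q_{U_0}\to Q_{U_1}$ preserving $A$ and $s$. Then $\Phi$ preserves every $A$-covariant derivative of $s$, the induced map on $T_M$ transports $R^g$, and $\delta(\Phi)$ preserves $a:=\delta(A)$ and hence $F_a$. Consequently $\Phi$ sends each iterated derivative $\sigma^{(i)}_A$ at $x_1$ to $\sigma^{(i)}_A$ at $x_2$, and restricting $\Phi$ to $Q_{x_1}$ produces the $\Spin^c(n)$-equivariant isomorphism required by Definition \ref{InfHomSectDef}. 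Because $\Spin^c(n)$ is compact, the reductivity condition is automatic (Remark \ref{CompRed}), and Corollary \ref{ConnectionBCo} delivers a connection $B\in{\cal A}(Q)$ satisfying
$$\nabla^B_\EE\sigma=0,\qquad(\nabla^B_\MM\otimes\nabla^B_\add)(B-A)=0.$$

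To unpack these into the three conclusions, set $C:=\rho(B)$ and $b:=\delta(B)$. The $\ad_{\Spin^c(n)}$-invariant splitting $\mathrm{spin}^c(n)=\spin(n)\oplus i\R$ induces a $\nabla^B_\add$-parallel decomposition $\ad(Q)=\ad(\SO(M))\oplus i\underline{\R}$, under which $B-A$ corresponds to the pair $\bigl(C-C_0,\tfrac{1}{2}(b-a)\bigr)$. The second displayed equation then yields both $\nabla^C(C-C_0)=0$ and $\nabla^C(b-a)=0$. The first yields $\nabla^C R^g=0$ and $\nabla^C F_a=0$ (since the connections on those subbundles factor through $\rho$), together with $\nabla^B s=0$. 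Finally, applying Corollary \ref{C_C_0CO} to $C$ and $C_0$ on $\SO(M)$, the pair of conditions $\nabla^C R^g=0$, $\nabla^C(C-C_0)=0$ forces $\nabla^C R^{\nabla^C}=\nabla^C T^{\nabla^C}=0$; that is, $C$ is an Ambrose-Singer connection.

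The only delicate bookkeeping point, and likely the main obstacle to tightening the proof, is the explicit identification of $B-A\in A^1(\ad(Q))$ with the pair $(C-C_0,b-a)$, including the scalar factor introduced by the squaring map in $\delta:\Spin^c(n)\to\S^1$; once this algebra is settled, the three conclusions follow formally from Corollary \ref{ConnectionBCo} and Corollary \ref{C_C_0CO}.
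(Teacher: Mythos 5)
Your proof is correct and follows essentially the same route as the paper's: apply Corollary \ref{ConnectionBCo} to the section $(R^g,F_a,s)$ with $B_0=A$, then unpack the two resulting identities through the $\nabla^B_{\mathrm{ad}}$-parallel splitting $\ad(Q)=\so(T_M)\oplus i\underline{\R}$ and invoke Corollary \ref{C_C_0CO}. The only difference is cosmetic: you track the factor $\tfrac{1}{2}$ coming from $\delta_*$ on the $i\R$ summand, whereas the paper writes $B-A=(C-C_0,\,b-a)$ as a shorthand identification; this does not affect the vanishing conclusions.
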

\begin{proof}
The local homogeneity assumptions imply that the  section defined by the triple $(R^g,F_a,s)$  is infinitesimally homogeneous with respect to the connection $A$. The claim follows from Corollary \ref{ConnectionBCo} noting that, via the direct sum decomposition $\ad(Q)=\so(T_M)\oplus i\underline{\R}$, one has $B-A=(C-C_0,b-a)$, $F_A=(F_{C_0},F_a)$.
\end{proof}

Using the  proofs of Lemma \ref{SurjPairs},  Theorems \ref{equivTh}, \ref{CompleteBase} we obtain the  following analogue of Corollary \ref{spin-coro} for LH  $\Spin^c$-triples:

\begin{co} Let $(M, g)$ be a simply connected, complete LH Riemannian manifold, and $(\Lambda,A,s)$  be an LH  $\Spin^c$-triple on  $(M,g)$. 
There exists a Lie group $G$ and an action $\beta : G \times Q \to Q$ by bundle isomorphisms with the following properties:
\begin{enumerate}
\item 	$\beta$ lifts a transitive action by isometries $\alpha : G \times M \to  M$.
\item $\beta$ leaves $A$ and $s$ invariant.
\end{enumerate}

\end{co}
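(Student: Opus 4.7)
The plan is to mimic the proof of Corollary \ref{spin-coro} with the enhancements needed to handle the additional datum: the $\Spin^c$-connection $A$ and the determinant bundle. I would begin by applying Proposition \ref{PropExB} to obtain a connection $B\in {\cal A}(Q)$ such that the induced Ambrose-Singer connection $C\coloneqq \rho(B)$ on $\SO(M)$ satisfies $\nabla R^\nabla=\nabla T^\nabla=0$ (where $\nabla$ is the linear connection induced by $C$), $\nabla^C F_a=0$, $\nabla^C(b-a)=0$, and $\nabla^B s=0$. Applying the construction of Section 4.1 to the pair $(\nabla,B)$ produces a linear connection $\bar\nabla=\nabla^{h,B}\oplus \nabla^{v,B}$ on $T_Q$ satisfying $\bar\nabla R^{\bar\nabla}=\bar\nabla T^{\bar\nabla}=0$ (Theorem \ref{nablabarIsSinger}); in addition, the distributions $B$ and $A$ are $\bar\nabla$-parallel (Lemma \ref{AA0parallel}), and the fundamental vertical fields $a^\#$ are $\bar\nabla$-parallel along $B$-horizontal curves.

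Next I would introduce the open subset $\Sg^{\bar\nabla}_{g,K,s}\subset \Sg^{\bar\nabla}_{g,K}$ consisting of those germs $\varphi$ of $\bar\nabla$-affine isomorphisms which, in addition to the conditions listed just before Lemma \ref{OpenLemma} (with $A_0$ replaced by $A$ and with the $K=\Spin^c(n)$-equivariance), satisfy $\varphi_*(s_{\sg(\varphi)})=s_{\tg(\varphi)}$. The fact that $\nabla^B s=0$ implies, exactly as in Lemma \ref{OpenLemma}, that $\Sg^{\bar\nabla}_{g,K,s}$ is still a union of integral submanifolds of the distribution ${\cal D}^{\bar\nabla}$, because parallel transport along $B$-horizontal lifts of curves in $M$ respects $s$. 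The analogue of Lemma \ref{SurjPairs} then holds: for any $(x_0,x_1)\in M\times M$, choose a smooth path $\eta$ joining them, lift horizontally with respect to $B$ to a path in $Q$, and let $f$ be the corresponding $\bar\nabla$-parallel transport; this $f$ lies in $S^{\bar\nabla}_{g,K,s}$ and the corresponding germ is in $\Sg^{\bar\nabla}_{g,K,s}$.

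I would then invoke the extension argument of Theorem \ref{CompleteBase}: by Proposition \ref{completeNabla}, since $(M,g)$ is complete and $\nabla$ is metric, $\nabla$ is complete; this implies $\bar\nabla$ is complete along $B$-horizontal curves, and the source map $\Sg^{\bar\nabla}_{g,K,s}/K\to M$ becomes a covering map. Because $M$ is simply connected, every germ $\varphi\in \Sg^{\bar\nabla}_{g,K,s}$ extends uniquely to a global bundle automorphism $\Psi:Q\to Q$ which covers an isometry of $(M,g)$ and preserves both $A$ (since it preserves $B$ and the $B-A$ difference is $\bar\nabla$-parallel, or more directly as in the proof of Theorem \ref{CompleteBase}) and $s$ (the identity $\Psi_*(s)=s$ holds because both sides are $\nabla^B$-parallel sections agreeing at one point).

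Finally, define
\[
G\coloneqq \{\Psi:Q\to Q \text{ bundle iso.} \mid \Psi \text{ covers an isometry of }(M,g),\ \Psi^*(A)=A,\ \Psi_*(s)=s\}.
\]
By Lemma \ref{PropClosed} (applied to the $\Spin^c(n)$-bundle $Q$ and the action of the isometry group $\Iso(M,g)$ through lifts), the subgroup of bundle automorphisms preserving $A$ is a closed Lie subgroup; intersecting with the closed condition $\Psi_*(s)=s$ keeps it closed, so $G$ carries a natural Lie group structure and the action $\beta:G\times Q\to Q$ is smooth. The previous paragraph shows that the projection $G\to \Iso(M,g)$ has image acting transitively on $M$, so the induced action $\alpha$ is transitive, which proves conditions (1) and (2). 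The main subtlety is verifying that the germ extension preserves $s$ globally; this is where the completeness of $\bar\nabla$ along horizontal lifts, together with $\nabla^B s=0$, is essential, since simple connectedness of $M$ alone does not lift to simple connectedness of $Q$, but it suffices because the germs we extend are already $K$-equivariant.
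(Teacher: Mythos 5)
Your proof is correct, but it takes a genuinely different route from the paper's. The paper constructs $\bar\nabla$ on the total space of the determinant $\S^1$-bundle $\delta(Q)$ (applying Lemma~\ref{AA0parallel} to the triple $(g,\delta(Q)\to M,a)$ with the pair $(\nabla^C,b)$), produces a $\varphi$-covering isomorphism $\phi$ of $\delta(Q)$ by parallel transport along a $b$-horizontal lift, then lifts the product isomorphism $(\tilde\varphi_*,\phi)$ of $\SO(M)\times_M\delta(Q)$ through the double cover $Q\to\SO(M)\times_M\delta(Q)$ (trivial since $M$ is simply connected), and finally checks that the chosen lift preserves $s$ by comparing $B$-horizontal lifts. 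You instead apply the $\bar\nabla$-machinery directly to the principal $\Spin^c(n)$-bundle $Q$ itself, which is legitimate since $\Spin^c(n)$ is compact, and then introduce $\Sg^{\bar\nabla}_{g,K,s}$ as a union of leaves of the distribution ${\cal D}^{\bar\nabla}$ over $Q$. This avoids the double-cover lifting argument entirely. The price is that you must re-verify the hypotheses of Theorem~\ref{nablabarIsSinger} and Lemma~\ref{AA0parallel} for the pair $(\nabla^C,B)$ on $Q$: you need $(\nabla^C\otimes\nabla^B)F^B=0$ and $(\nabla^C\otimes\nabla^B)(B-A)=0$, which do follow from Proposition~\ref{PropExB} via the decomposition $\ad(Q)\simeq\so(T_M)\oplus i\underline{\R}$, $B-A=(C-C_0,b-a)$, $F^B=(R^\nabla,F_b)$, together with Corollary~\ref{C_C_0CO} (giving $\nabla^C(C-C_0)=0$ from $\nabla^C T^{\nabla^C}=0$) and Lemma~\ref{NablaFNablaF0} in the abelian case (giving $\nabla^C F_b=0$ from $\nabla^C F_a=0$, $\nabla^C(b-a)=0$). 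You state these facts as consequences of Proposition~\ref{PropExB} without spelling out the intermediate steps; in a final write-up these two lines of verification should be made explicit, since they are what justify transferring the Section~4 machinery from $\delta(Q)$ to $Q$.
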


\begin{proof} Let $B\in {\cal A}(Q)$ be a connection given by  Proposition \ref{PropExB}, and put $C\coloneqq\rho(B)$, $b\coloneqq\delta(B)$, $a\coloneqq \delta(A)$. Applying Lemma \ref{AA0parallel} to the triple $(g,\delta(Q)\to M,a)$ and the pair $(\nabla^C,b)$, we obtain a connection $\bar\nabla$ on the total space $\delta(Q)$ which preserves the distributions $a$ and $b$. 

 It suffices to prove that for any $(x_0,x_1)\in M\times M$ there exists an isometry $\varphi:M\to M$ with $\varphi(x_0)=x_1$, and a $\tilde\varphi_*$-covering bundle isomorphism $\Phi:Q\to Q$ which preserves $A$ and $s$.  Since the triple $(g, \delta(Q)\to M,a)$ is locally homogeneous, Theorem \ref{CompleteBase} gives an isometry $\varphi:M\to M$ and a $\varphi$-covering bundle isomorphism $\phi:\delta(Q)\to \delta(Q)$ which preserves  $a$. 
 
 Using the constructive proof of the transitivity property stated in  Lemma \ref{SurjPairs}, we see that such a lift $\phi$ can be obtained explicitly in the following way:
 
Let $\eta:[0,1]\to M$ be  a smooth path with $\eta(i)=x_i$,  $\tilde \eta:[0,1]\to \delta(Q)$ be a $b$-horizontal lift of $\eta$, and let $f:T_{\tilde\eta(0)}(\delta(Q))\to T_{\tilde\eta(1)}(\delta(Q))$ be the linear isomorphism given by $\bar\nabla$-parallel transport along $\tilde \eta$. 

The proofs of  Theorems \ref{equivTh}, \ref{CompleteBase} show that $f$ is the tangent map at $\tilde \eta(0)$ of a global bundle isomorphism $\phi: \delta(Q)\to \delta(Q)$ which is affine with respect to $\bar \nabla$ and lifts an isometry $\varphi:M\to M$ with $\varphi(x_0)=x_1$. Taking into account the definition of $f$ and the explicit construction of $\bar\nabla$, it follows that the tangent map $\varphi_{*x_0}$ is given by $\nabla^C$-parallel transport along $\eta$, so $\tilde\varphi_{*,x_0}$ is given by $C$-parallel transport along $\eta$ in $\SO(M)$.

Since $\bar \nabla$  preserves $a$ and $b$, $\phi_{*,\tilde\eta(0)}$ is given by $\bar\nabla$-parallel transport, and $\Phi$ is $\bar\nabla$-affine, it follows that $\phi$ preserves the connections $a$ and $b$. The bundle isomorphism
$$
(\tilde\varphi_*,\phi):\SO(M)\times_M \delta(Q)\to \SO(M)\times_M \delta(Q)
$$
preserves the connection $(C_0,a)$, so any lift   $\Phi:Q\to Q$ of it preserves $A$.

It suffices to prove that there exists such a lift which also preserves $s$. Let $\eta_1:[0,1]\to \SO(M) $ be a $C$-horizontal  lift of $\eta$, and $\bar \eta$ be a lift of $(\eta_1,\tilde\eta)$ to $Q$ via the double cover $Q\to \SO(M)\times_M \delta(Q)$.  We know that 
$$(\tilde\varphi_*,\phi)(\eta_1(0),\tilde\eta(0))=(\eta_1(1),\tilde\eta(1))\,.$$

 Since $M$ is simply connected, the double cover $Q\to \SO(M)\times_M \delta(Q)$ is trivial, so there exists a lift $\Phi:Q\to Q$   of $(\tilde\varphi_*,\phi)$ which maps $\bar \eta(0)$ onto $\bar \eta(1)$. We claim that $\Phi(s)=s$, or, equivalently, that $\tilde s\circ \Phi=\tilde s$, where $\tilde s: Q\to \Delta_n$ is the equivariant map associated with $s$.

Since $(\eta_1,\tilde\eta)$ is $(C,b)$-horizontal, it follows that its lift $\bar \eta$ to $Q$ is $B$-horizontal. Since $\nabla^Bs=0$ it follows that $\tilde s$ is constant along $\bar\eta$, so 
$$\tilde s(\bar \eta(0))=\tilde s(\bar \eta(1))=(\tilde s\circ \Phi)(\bar \eta(0))\,.$$
Therefore $\tilde s$,  $\tilde s\circ \Phi$ are equivariant maps which are constant along $B$-parallel curves and coincide at  $s(\bar \eta(0))$, so they coincide.
 
 \end{proof}


\begin{thebibliography}{BBBa}
%
\bibitem[AS]{AS} Ambrose, W.,  Singer, I. M.: {\it On homogeneous Riemannian manifolds}, Duke Math. J., 25, 647-669 (1958).
%
\bibitem[Ba1]{Ba1} Bazdar, A.: {\it Locally homogeneous triples: Extension theorems for parallel sections and parallel bundle isomorphisms},  Mediterr. J. Math. Vol. 14, 55 (2017).
%
 \bibitem [Ba2]{Ba2} Bazdar, A.: {\it Geometric principal bundles over geometric Riemannian manifolds}, Ph.D. thesis, Aix-Marseille University. (2017). \texttt{http://www.theses.fr/2017AIXM0210}
 %
 \bibitem [BT]{BaTe}  Bazdar, A., Teleman, A.:  {\it Locally homogeneous connections on principal bundles over hyperbolic Riemann surfaces}, Bull. Math. Soc. Sci. Math. Roumanie Tome 63 (111), No. 1,  23–41 (2020).  %
 \bibitem[Be]{Be}  Besse, A.: {\it Einstein manifolds}, Classics in Mathematics, Springer, Berlin, Heidelberg (1987).
%
\bibitem[CL]{CL} Calvaruso, G., Lopez, M. C.:{\it Pseudo-Riemannian Homogeneous Structures}, Developments in Mathematics, Vol. 56, Springer, Switzerland (2019).
%
\bibitem[CN]{CN} Console, S.,Nicolodi, L.: {\it Infinitesimal characterization of almost Hermitian homogeneous spaces}, Commentationes Mathematicae Universitatis Carolinae, Vol. 40 , 713-721 (1999).
%
\bibitem[DK]{DK}  Donaldson, S., Kronheimer, P.: {\it The Geometry of Four-Manifolds}, Oxford Mathematical Monographs, Clarendon Press, Oxford (1990).
%
\bibitem[FZ]{FZ} Florit, L., Ziller, W.: {\it Topological obstructions to fatness revisited}, Geometry \& Topology 15, 891-925 (2011).
%
 \bibitem[Fr]{Fr} Friedrich, Th.: {\it Dirac Operators in Riemannian Geometry}, Graduate Studies in Mathematics vol. 25, AMS, Providence, Rhode Island (2000).
%
\bibitem [GO]{GO}  Gadea, P. M., Oubi\~{n}a, J. A.: {\it Homogeneous pseudo-Riemannian structures and homogeneous almost para-Hermitian structures}, Houston J. Math. 18, 449–465 (1992). 
%
\bibitem[God]{God} Godement, R.: {\it Topologie Algébrique et Théorie des Faisceaux}, Hermann, Paris (1964). 
%
\bibitem[GH]{GH} Greub, W., Halperin, S., Vanstone, R.: {\it Connections, Curvature and Cohomology II }, Pure Appl. Math., 47-II, Academic Press, London, (1973).
%
\bibitem[It]{It} Itoh, M.:{\it Invariant connections and Yang–Mills solutions}. Trans. Amer. Math. Soc. 267(1), 229-236 (1981).
%
\bibitem[Je]{Je}  Jensen, G.: {\it Einstein metrics on principal fiber bundles}, J. Diff. Geom. 8, 599-614 (1973).
%
\bibitem[KN1]{KN} Kobayashi S., Nomizu, K.: {\it Foundations of Differential Geometry I}, Interscience Publ., New York, Vol. I (1963).
%
\bibitem[KN2]{KN2} Kobayashi S., Nomizu, K.: {\it Foundations of Differential Geometry II}, Interscience Publ., New York, Vol. II (1969).
%
%\bibitem[KO]{KO} Kowalski, O., Tricerri, F.  {\it A canonical connection for locally homogeneous riemannian manifolds}, Global Differential Geometry and Global Analysis. Lecture Notes in Mathematics, vol 1481. Springer, Berlin, Heidelberg (1991).
%
\bibitem [Ki]{Ki}  Kiri\v{c}enko, V. F.: {\it On homogeneous Riemannian spaces with invariant tensor structure}, Soviet Math. Dokl., Vol. 21 No. 3, 734-737 (1980).
%
\bibitem  [Lu]{Lu} Lujan, I.: {\it Reductive locally homogeneous pseudo-Riemannian manifolds and Ambrose-Singer connections}, Differ. Geom. Appl. 41, 65-90 (2015).
%
\bibitem [NT]{NT}  Nicolodi, L.,  Tricerri, F.: {\it On two theorems of I. M. Singer about homogeneous spaces}, Ann. Global Anal. Geom. 8 ,193-209 (1990).
%
\bibitem[O1]{O1} Opozda, B.:{\it On locally homogeneous $G$-structures}, Geom. Dedicata, 73, 215-223 (1998). 
%
\bibitem[O2]{O2} Opozda, B.:{\it Curvature homogeneous and locally homogeneous affine connections}, Proc. Amer. Math. Soc. 124, 1889-1893 (1996).
%
\bibitem[O3]{O3} Opozda, B.: {\it Affine versions of Singer’s theorem on curvature homogeneous spaces}, Ann. Global Anal. Geom., 187-199 (1997).
%
\bibitem[SW]{SaWa} A. Sagle, R. Walde: {\it Introduction to Lie groups and Lie algebras}, Academic Press, New York and London (1973).
%
\bibitem [Se]{Se} Sekigawa, K.: {\it Notes on homogeneous almost Hermitian manifolds}, Hokkaido Math. J. 7, 206-213 (1978). 
%
\bibitem[Si]{Si}  Singer, I. M.: {\it  Infinitesimally homogeneous spaces }, Comm. Pure Appl. Math.,13, 685-697 (1960).
%
\bibitem[Te]{Te} Teleman, A.: {\it Introduction à la théorie de jauge}, SMF, Cours Spécialisés, vol. 18 (2012).
%
\bibitem[Th]{Th}  Thurston, W. P.: {\it Three-dimensional manifolds, Kleinian groups and hyperbolic geometry}, AMS, Bulletin. New Series, vol. 6 (3)  357-381 (1982).
%
\bibitem [Tr]{Tr}Tricerri, F.:{\it Locally homogeneous Riemannian manifolds}, Rend. Sem. Mat. Univ. Politec. Torino 50.4, 411-426 (1993).
%
\bibitem [TV]{TV} Tricerri, F.,  Vanhecke, L.: {\it Homogeneous Structures on Riemannian Manifolds }, London Math. Soc. Lect. Notes, vol. 83. Cambridge Univ. Press, Cambridge (1983).
%
\bibitem[WZ]{WZ} Wang, M., Ziller, W.: {\it Einstein metrics on principal torus bundles}, J. Diff. Geom. 31, 215-248 (1990). 
%
\end{thebibliography}
\end{document}